\newtheorem{theorem}{\textbf{Theorem}}[section]
\newtheorem{proposition}[theorem]{\textbf{Proposition}}
\newtheorem{lemma}[theorem]{\textbf{Lemma}}
\newtheorem{corollary}[theorem]{\textbf{Corollary}}
\newtheorem{claim}[theorem]{\textbf{Claim}}
\newtheorem{problem}[theorem]{{Problem}}
\theoremstyle{definition}
\newtheorem{definition}[theorem]{Definition}
\newtheorem{remark}[theorem]{Remark}
\newtheorem{definition-remark}{Definition-Remark}
\def\ag{\`a}
\def\Sing{\operatorname{Sing}}
\def\geq{\geqslant}
\def\leq{\leqslant}
\begin{document}

\title[Limits of nodal surfaces and applications]{Limits of nodal surfaces and applications }
\author
{Ciro Ciliberto}
\address{Dipartimento di Matematica\\
Universit\`a di Roma ``Tor Vergata''\\
 Via della Ricerca Scientifica\\
00133 Roma, Italy.}
\email{cilibert@mat.uniroma2.it}

\author{Concettina Galati }
\address{Dipartimento di Matematica\\
 Universit\ag\, della Calabria\\
via P. Bucci, cubo 31B\\
87036 Arcavacata di Rende (CS), Italy. }
\email{concettina.galati@unical.it }


\subjclass{14B07, 14J17, 14C20}

\keywords{Severi varieties of surfaces, deformations,   
nodes}

\date{February 26th, 2025}

\dedicatory{}

\commby{}


\begin{abstract}  Let  $\mathcal X\to\mathbb D$ be a  flat family of projective complex 3-folds  over a disc $\mathbb D$  with smooth  total space $\mathcal X$ and smooth general fibre  $\mathcal X_t,$  and whose special fiber $\mathcal X_0$  has double  normal crossing singularities, in particular,  $\mathcal X_0=A\cup B$, with $A$, $B$ smooth  threefolds intersecting transversally along a  smooth surface $R=A\cap B.$ In this paper we first study the limit singularities of a $\delta$--nodal surface in the general fibre $S_t\subset\mathcal X_t$, when   $S_t$   tends to the central fibre  in such a way its  $\delta$ nodes  tend to distinct points in $R$. The result  is that the limit surface $S_0$ is in general the union $S_0=S_A\cup S_B$, with $S_A\subset A$, $S_B\subset B$ smooth surfaces, intersecting on $R$ along a $\delta$-nodal curve $C=S_A\cap R=S_B\cap B$. Then we prove that, under suitable conditions, a surface $S_0=S_A\cup S_B$ as above indeed deforms to a $\delta$--nodal surface in the general fibre of 
$\mathcal X\to\mathbb D$.  As applications we prove that there are regular irreducible components of the Severi variety  of  degree $d$  surfaces  with $\delta$ nodes   in $\mathbb P^3$, for every $\delta\leq {d-1\choose 2}$ and of the Severi variety of complete intersection  $\delta$-nodal  surfaces  of type $(d,h)$,  with $d\geq h-1$  in $\mathbb P^4$,  for every  $\delta\leq {{d+3}\choose 3}-{{d-h+1}\choose 3}-1.$

\end{abstract}


\maketitle

\tableofcontents

\section{Introduction}

The main object of study in this article is Severi varieties of nodal surfaces on smooth, projective, complex threefolds. 
Severi varieties of nodal hypersurfaces on a smooth variety are a well know object of study in algebraic geometry, that goes back to well more than a century ago. Its importance is underlined by the relationships with other themes in the area. For example, the recent papers \cite {DF, Th} explore the relation of Severi varieties with the Hodge conjecture.  

Our approach to the subject is via degenerations. Degenerations of smooth complex  varieties to  complex varieties with simple normal crossings is also a classical object of study. In particular it has been widely used by several authors for studying Severi varieties of nodal curves on  surfaces. The method is powerful and enables one to obtain sharp results on the non--emptiness of some Severi varieties of curves (see, for instance, \cite {Ch, CDGK1, CDGK2, GaK}, etc.). 

One of the basic ideas in these papers is the well known and classical fact that 
the limit of a curve $C_t$ with a node $p_t$ on a smooth surface $\mathcal X_t$, when $\mathcal X_t$ degenerates to a reducible surface $\mathcal X_0=A\cup B$, with $A$ and $B$ smooth and meeting transversally along a smooth curve 
$R=A\cap B$, and $p_t$ going to a  point $p_0\in R$, is a curve $C_0\subset\mathcal X_0$ with a tacnode in $p_0$, which appears scheme theoretically with multiplicity $2$. This result is an easy consequence of the study of the versal deformation space of a tacnode, and its proof is in \cite {CH,R}. This result has been proved also using limit linear systems techniques, see \cite{galati}. The present article intends to extend this result on the limit of a nodal curve to the case of nodal surfaces in threefolds, and we will take the point of view of \cite{galati}.  In the sequel, a {\it node} of a surface will be an $A_1$-singularity.

Let  $\mathcal X\to\mathbb D$ be a  flat family of projective complex 3-folds  over a disc $\mathbb D$  with smooth  total space $\mathcal X$ and smooth general fibre, and whose special fiber $\mathcal X_0$  has double  normal crossing singularities, in particular,  $\mathcal X_0=A\cup B$, with $A$, $B$ smooth threefolds intersecting transversally along a  smooth surface $R=A\cap B.$ 

First of all we will study in Section \ref {sect: getting triple point}  the limit singularities of a $\delta$--nodal surface in the general fibre  $S_t\subset\mathcal X_t$, when  $S_t$   tends to the central fibre   in such a way that its  $\delta$ nodes  tend to distinct points  $p_1,\ldots,p_\delta$  in $R$. The result (see Theorem \ref {thm-node}) is that the limit surface $S_0$ is in general the union $S_0=S_A\cup S_B$, with $S_A\subset A$, $S_B\subset B$ smooth surfaces, that cut out on $R$ the same curve $C$   having nodes at $p_1,...,p_\delta$ and no further singularities.  In this case we say that $S_0$ presents  a singularity  of type $T_1$ at   every point  $p_i$, $i=1,...,\delta$. The equations of a $T_1$ singularity are given in \eqref{eq:tacnode}.   
Finally in \ref {ssec:loc} we provide the local equation of (an example of) a local deformation of a singularity of type $T_1$ to a node on the general fibre. 

The central part of our paper is Section \ref{deformations}.  First of all we prove in Lemma \ref {lm:solo-nodo} that 
the only singularity of a surface $S_t\subset \mathcal X_t$ to which a singularity of type $T_1$  of a surface $\mathcal S_0\subset\mathcal X_0$ may be deformed is a node.  In \S \ref {ssec:defo} we describe the 
 first order  locally trivial deformations in $\mathcal X_0$ of surfaces $S_0=S_A\cup S_B$ with $T_1$ singularities on $R$ and at most nodes elsewhere. In particular we find sufficient conditions for smoothness of the equisingular deformation locus of $S_0$ in the relative Hilbert scheme of $\mathcal X$. If these conditions are verified, then the $T_1$ singularities of $S_0$ and its nodes can be smoothed independently inside $\mathcal X_0$.  Next, in \S \ref {sect:smoothing-to-nodes}, we consider deformations of a surface $S_0\subset\mathcal X_0$, with $T_1$ singularities on $R$ and at most nodes elsewhere, off the central fibre. We prove, in Theorem \ref {thm:main-theorem}, that under suitable conditions, one can deform $S_0$ off the central fibre $\mathcal X_0$ to a surface $S_t$ in the general fibre $\mathcal X_t$, with only nodes, that are the deformations of the nodes of $S_0$ and of the $T_1$ singularites of $S_0$, and that the space of  this deformation  is generically smooth of the expected dimension. Again, generic smoothness means that the nodes of the general surface $S_t$ can be independently smoothed. 

In Section \ref {sec:appl} we give a couple of applications of our general result. Essentially we consider the following problem (see Problem \ref {regular}). Let $X$  be a smooth irreducible projective complex threefold. Let $L$ be a  line bundle on  $X$ 
such that the general surface in the linear system $| L|$ is smooth and irreducible.
Let $V^{X, |L|}_\delta$ be the Severi variety of surfaces  $S$ in $|L|$ which are reduced  with  only $\delta$ nodes as singularities. The question we consider is: given $X$ and $L$ as above, which is the  maximal value  of $\delta$ such that 
 $V^{X, |L|}_\delta$ has a generically smooth component of the expected codimension $\delta$ in $|L|$? We give contributions to this problem in two cases. The first one is for $X=\mathbb P^3$ and $L=\mathcal O_{\mathbb P^3}(d)$ (see  Theorem \ref {thm:reg}), the second one is when $X$ is a general hypersurface of degree $h\geq 2$ in $\mathbb P^4$ and $L= \mathcal O_X(d)$ with $d\geq h-1$ (see  Theorem \ref {thm:regol}). 
 
 To finish this introduction it is worth mentioning that the basic idea of a singularity of type $T_1$ being a limit of a node, is already contained, although in a rather obscure form, in B. Segre's paper \cite {Seg}. In this paper Segre considers, even more generally, the case of higher dimension. As a matter of fact, we believe that there should no obstruction in extending our results in higher dimension too. However we did not dwell on this here, because we thought that the surface in threefold case already shows the complexity of the situation. We plan to come back on this in the future. \medskip
 
{\bf Notation}: in what follows we use standard notation in algebraic geometry. In particular, we will denote by $\sim$ the linear equivalence. \medskip


\section{Limit singularity of a node of a surface in a threefold}\label{sect: getting triple point}

\subsection{The problem}\label{ssec:surf} Let  $\mathcal X\to\mathbb D$ be a  flat family of projective complex 3-folds  over a disc $\mathbb D$  with smooth  total space $\mathcal X$ and smooth general fiber
$\mathcal X_t$, with $t \in \mathbb D\setminus \{0\}$, and whose special fiber $\mathcal X_0$  has double  normal crossing singularities, in particular,  $\mathcal X_0=A\cup B$
has two  smooth  irreducible components $A$ and $B$, intersecting transversally along a 
smooth surface $R=A\cap B.$ 

Let $\mathcal L$ be a line bundle on $\mathcal X$. For each $t\in \mathbb D$ we set
$\mathcal L_t= \mathcal L_{| \mathcal X_t}$.  
We consider the following question. Roughly speaking, assume that for $t\in \mathbb D$ general we have a surface $S_t\in |\mathcal L_t|$ having a double point $p_t$. Assume that $S_t$ tends to a surface $S_0$ in $\mathcal X_0$ with $p_t$ tending to a point $p_0\in R$. The question is: what is the  singularity  that $S_0$ has  at $p_0$.  Let us make this setting more precise.  

\subsection{Set up} Let us fix $p=p_0\in R$, which is a double point for the central fibre $\mathcal X_0$ whereas $\mathcal X$ is smooth at $p$. Hence 
there are no sections of $\mathcal X\to\mathbb D$ passing through $p$. So let us consider a smooth bisection $\gamma^\prime$ of $\mathcal X\to\mathbb D$ passing trough $p$. 
\smallskip

{\bf \em Step 0.} Let us look at the following commutative diagram

\begin{displaymath}
\xymatrix{
\mathcal Y\ar[r]\ar[dr]&  \mathcal X^\prime \ar[d]\ar[r] &
\mathcal X\ar[d]\\
& \mathbb D  \ar[r]^{\nu_2}& \mathbb D  }
\end{displaymath}
where the rightmost square is cartesian and   $\nu_2: u\in \mathbb D\to u^2\in \mathbb D$. Then $\mathcal X'$ is singular along the counterimage of $R$ (that by abuse of notation we still denote by $R$), which is a locus of double points  for $\mathcal X'$, with tangent cone a quadric cone of rank 3. The morphism $\mathcal Y\to \mathcal X'$ is the desingularization of $\mathcal X'$ obtained by blowing up $\mathcal X'$ along $R$. 

The induced morphism $\pi:\mathcal Y\to\mathcal X$
is $2:1$ outside the central fibre of $\mathcal Y$. In particular, for every $t\neq 0$ there are exactly two fibres $\mathcal Y_{u_1}$
and $\mathcal Y_{u_2}$ of $\mathcal Y\to\mathbb D$ isomorphic to the fibre $\mathcal X_t$ of $\mathcal X\to\mathbb D$ via $\pi$, where
$\{u_1,\,u_2\}=\nu_2^{-1}(t)$. 
The family  $\mathcal Y\to\mathbb D$ has central  
fibre $\mathcal Y_0=A\cup \mathcal E\cup B$, where, by abusing notation, $A$ and $B$ denote   
the proper transforms of $A$ and $B$  and $\mathcal E\to R$ is a $\mathbb P^1$-bundle 
on $R.$ The morphism $\pi$ is totally ramified along $A$ and $B$ and it contracts $\mathcal E$ to $R$ in $\mathcal X$. In particular, $A\cap\mathcal E$ and $B\cap\mathcal E$ are two non--intersecting sections of $\mathcal E$ 
both isomorphic to $R$. Denote by $F$ the fibre of $\mathcal E\to R$ over the point $p\in R\subset \mathcal X_0$.  One has  $F\cong \mathbb P^1$. Now the counterimage of $\gamma^\prime$ on $\mathcal Y$ is the union of two sections of $\mathcal Y\to \mathbb D$,  
each intersecting $\mathcal Y_0$ at a smooth point on $F$.  We let $\gamma$ be one of these two sections
and $q$ be the intersection point of $\gamma$ and $F$.

Assume there exists an effective divisor $\mathcal S\subset \mathcal Y$,  with $\mathcal S\sim \pi^*(\mathcal L)$ , having double points along $\gamma.$ Let $S$ be the image of $\mathcal S$ in $\mathcal X$ via the morphism $\pi$.  Note that $S$  has points of multiplicity $2$ along the  bisection $\gamma'$.
For every $t\neq 0$, if $\mathcal Y_{u_1}$ and $\mathcal Y_{u_2}$,  with $u_1^2=u_2^2=t$, are the two fibres of $\mathcal Y\to\mathbb D$ isomorphic to $\mathcal X_t$
via $\pi$, we have  
$$S_t=S\cap\mathcal X_t = S_{u_1}\cup S_{u_2},$$
where $S_{u_i}=\pi(\mathcal S_{u_i})$  and $\mathcal S_{u_i}=\mathcal S\cap\mathcal Y_{u_i}$, for  $i=1,2$.  If $t=0$, we have that  
$$
S\cap\mathcal X_0=2S_0=2(S_A\cup S_B),
$$
where  $S_A=\pi(\mathcal S\cap A)\subset A$ and $S_B=\pi(\mathcal S \cap B)\subset B$.

We want to understand  $S|_{\mathcal X_0}.$ To do this, we will first understand $\mathcal S|_{\mathcal Y_0}.$\smallskip

{\bf \em Step 1.} Let $\pi_1:\mathcal Y^1\to\mathcal Y$ be the blowing-up of $\mathcal Y$ along $\gamma$ 
with  exceptional divisor $\Gamma$. We have a new family $\mathcal Y^1\to \mathbb D$ with general fibre the blow up of  $ \mathcal Y_u\cong \mathcal X_{\nu_2(u)}$  at its intersection  point with $\gamma$ (that is also the point of multiplicity $2$ of the surface $\mathcal S_u$), and 
central fibre $\mathcal Y^1_0=A\cup \mathcal E^\prime\cup B,$
where $\mathcal E^\prime$ is the blow-up of $\mathcal E$ at $q.$
Still denoting by $F$ the proper transform of $F$ in $\mathcal Y^1,$ 
we have that the proper transform $\mathcal S^1$ of $\mathcal S$ in $\mathcal Y^1$ satisfies 
\begin{equation}\label{step1}
\mathcal S^1\sim \pi_1^*(\mathcal S)-2\Gamma.  
\end{equation}
We deduce that $\mathcal  S^1 \cdot F=-2$ and hence $F\subset \mathcal  S^1.$ \smallskip

{\bf \em Step 2.} Let now $\pi_2:\mathcal Y^2\to\mathcal Y^1$ be the blow-up of $\mathcal Y^1$ along $F$ with new exceptional divisor $\Theta$.  We have the new family $\mathcal Y^2\to\mathbb D$, whose general fibre is the same as the general fibre of $\mathcal Y^1\to \mathbb D$, 
and new central fibre $\mathcal Y^2_0=A^\prime\cup \mathcal E^{\prime\prime}\cup\Theta\cup B^\prime,$
where $A^\prime,\,\mathcal E^{\prime\prime}$ and $B^\prime$ are the blow-ups of $A,\,\mathcal E^{\prime}$ and 
$B$ at $F\cap A$, $F\subset \mathcal E^\prime$ and $B\cap F$ respectively.
Notice that $\Theta\to F$ is a $\mathbb P^2$-bundle on $F$, intersecting
$A^\prime$ (resp. $B^\prime$) along a surface isomorphic to $\mathbb P^2$, which is a fibre of $\Theta\to F$, and at the same time is the exceptional divisor 
of the blow--up $A^\prime \to A$ at $F\cap A$ (resp. of the blow--up $B^\prime \to B$ at $F\cap B$). Moreover the surface $E:=\Theta\cap \mathcal E^{\prime\prime}$ has a $\mathbb P^1$-bundle structure $E\to F$ and it is  the 
exceptional divisor of  $\mathcal E^{\prime\prime}$, arising from the blowing-up of $F$ in $\mathcal E^{\prime}$. 

We claim that
$E\simeq \mathbb F_0.$  Indeed, since $F\simeq \mathbb P^1$ and $F$ is a fibre of $\mathcal E\to R$, we have that  $\mathcal N_{F|\mathcal E}\simeq\mathcal O_{\mathbb P^1}\oplus\mathcal O_{\mathbb P^1}$. This implies that
$\mathcal N_{F|\mathcal E'}=\mathcal O_{\mathbb P^1}(-1)\oplus\mathcal O_{\mathbb P^1}(-1),$ and hence $E=\mathbb P(\mathcal N_{F|\mathcal E'})=\mathbb F_0.$ 

If $\mathcal S^2$ is the proper transform of $\mathcal S^1$
in $\mathcal Y^2,$ by \eqref{step1}, we deduce that 
 
\begin{eqnarray}\label{step2}
\mathcal S^2|_{\Theta} &\sim & \pi^*_2(\mathcal S^1)|_{\Theta}-m_F\Theta|_{\Theta} \nonumber\\
&\sim &-  2  f_\Theta+m_F(A^\prime+B^\prime+\mathcal E^{\prime\prime})|_{\Theta} \nonumber\\
&\sim & -  2  f_\Theta+m_F(2f_\Theta+ \mathcal E^{\prime\prime}|_\Theta) \nonumber\\
&\sim & (2m_F-  2  )f_\Theta+m_F( \mathcal E^{\prime\prime}|_\Theta)\nonumber\\
&\sim & (2m_F-  2  )f_\Theta+m_FE,
\end{eqnarray}
where $f_\Theta$  denotes the linear equivalence class of a fibre of  $\Theta\to F$ and $m_F$ is the multiplicity of $\mathcal S^1$ along $F.$
Notice that  $\mathcal S^2|_{\Theta}$ must be an effective divisor because it is the restriction to $\Theta$  of an effective divisor  that does not contain $\Theta$. This implies the minimum value of $m_F$ making $\mathcal S^2|_{\Theta}$  effective  is $m_F=1$.

\subsection{Description of $\mathcal S_{|\mathcal Y_0}$ and of $S_{|\mathcal X_0}$} We assume now $m_F=1$. To better understand $\mathcal S^2|_{\Theta}\sim E$, 
we restrict $\mathcal S^2|_{\Theta}$ to $\mathcal E''$. Let $\sigma$ and $f$, with $\sigma^2=f^2=0$, be the
generators of the Picard group of   $E=\mathcal E''\cap\Theta\cong \mathbb F_0$.  By restricting \eqref{step2} to $E$ one gets
\begin{equation}\label{step2-ristretto}
 \mathcal S^2|_E=\mathcal S^2|_{\Theta\cap\mathcal E''}\sim  \mathcal E^{\prime\prime}_{|E}.
\end{equation} 
To compute $\mathcal E^{\prime\prime}_{|E}$,  we use the obvious relation $( A'+B' +\Theta+\mathcal E'')_{|E}=0$, which implies the following identity on $E$
\begin{equation}\label{tr-pt-step2}
2f+\Theta_{|E} +\mathcal E^{\prime\prime}_{|E}=0.
\end{equation}
Since $E=\mathcal E''\cap\Theta$, then $\Theta_{|E}$ is the class of 
$\Theta^2\cdot \mathcal E^{\prime\prime}$ which is clearly the class of  the normal bundle $\mathcal N_{E|\mathcal E''}$ of $E$ in $\mathcal E^{\prime\prime}$. Similarly $\mathcal E^{\prime\prime}_{|E}$ is the class of 
$\Theta\cdot {\mathcal E^{\prime\prime}}^2=c_1(\mathcal N_{E|\Theta})$. Since $E=\mathbb P(\mathcal N_{F|\mathcal E'})$, denoting 
by  $\pi_E:E\to F$ the natural projection morphism, whose fiber is $f$, and by 
$$
e:= \Theta|_E= \Theta^2\cdot \mathcal E'' =c_1(\mathcal N_{E|\mathcal E''}),
$$
we have that $\mathcal N_{E|\mathcal E''}\subset \pi_E^*(\mathcal N_{F|\mathcal E'})$ is the tautological fibre bundle of $E=\mathbb P(\mathcal N_{F|\mathcal E'})$.
So we get that
\begin{equation}\label{tautological}
f\cdot e=-1
\end{equation}
and 
$$
e^2-c_1(\pi_E^*(\mathcal N_{F|\mathcal E'}))\cdot e+c_2(\pi_E^*(\mathcal N_{F|\mathcal E'}))=0,
$$
(see \cite [p. \,606]{gh}). Now 
 $$
 c_2(\pi_E^*(\mathcal N_{F|\mathcal E'}))=\pi_E^*(c_2(\mathcal N_{F|\mathcal E'}))=0,
 $$
 since $\mathcal N_{F|\mathcal E'}$ is a vector bundle on $F$ and $\dim(F)=1.$
 So $$e^2-c_1(\pi_E^*(\mathcal N_{F|\mathcal E'}))\cdot e=e^2-\pi_E^*(c_1(\mathcal N_{F|\mathcal E'}))\cdot e=e^2-c_1(\mathcal N_{F|\mathcal E'})f\cdot e=e^2+c_1(\mathcal N_{F|\mathcal E'})=0.$$
Thus
 \begin{equation}\label{autoint-e}
e^2=-c_1(\mathcal N_{F|\mathcal E'})=2.
\end{equation}
Set $e=a\sigma+b f$. By \eqref{tautological} and \eqref{autoint-e} one gets $a=-1$ and 
$$
-2b=(-\sigma+bf)^2=e^2=2, \,\,\,{\rm \,\,hence}\,\,\,b=-1.
$$
 Thus, we have 
 \begin{equation*}\label{eq:lopt}
 \Theta_{|E}=c_1(N_{E|\mathcal E''})=-\sigma -f,
 \end{equation*}
 hence by \eqref {tr-pt-step2}, we get 
 \begin{equation}\label{normale-E} 
 \mathcal E^{\prime\prime}_{|E}=c_1(N_{E|\Theta})=\sigma-f.
 \end{equation}

 \begin{remark}\label{non-si-muove}
 From \eqref {normale-E} it follows that the divisor $E$ (which does not move on $\mathcal E''$ being there an exceptional divisor)  does not move in $\Theta$ either, 
since $\mathcal N_{E| \Theta}$ is non-effective. Hence, by \eqref {step2} and $m_F=1$, we have $\mathcal S^2|_E=E$. 
\end{remark}

We are now also able to describe the divisor $\mathcal S^2|_{A'\cap \mathcal E''}\cong 
\mathcal S^2|_{B'\cap \mathcal E''}$. Indeed 
$$
\mathcal S^2|_{A'\cap \mathcal E''}\sim ( \pi_2^*\pi_1^*(\mathcal S)-2\pi_2^*(\Gamma)-\Theta)|_{A'\cap \mathcal E''}
\sim \pi_2^*\pi_1^*(\mathcal S)|_{A'\cap \mathcal E''}-\Theta\cap A'\cap \mathcal E'',$$
and, since $\mathcal S^2|_\Theta=E$ by Remark \ref{non-si-muove},   it contains the $(-1)$-curve  $ \Theta\cap A'\cap \mathcal E''=E\cap A'$  in its base locus with multiplicity $1$.
Thus 
$
\mathcal S^2|_{A'\cap \mathcal E''} = \mathcal D_A\cup (\Theta\cap A'\cap \mathcal E''),
$
where 
$$
\mathcal D_A \sim \pi_2^*(\pi_1^*(\mathcal S))|_{A'\cap \mathcal E''}-2\Theta\cap A'\cap \mathcal E'',
$$
and similarly for $\mathcal S^2|_{B'\cap \mathcal E''}.$  

This analysis implies that: 
\begin{theorem}\label{thm-node}  Let $\mathcal S\subset \mathcal Y$ be an effective Cartier divisor as in {\rm{\bf Step 0}}. Then  the surface $\mathcal S_{|\mathcal Y_0}$ is the union of three surfaces  $\mathcal S_A=\mathcal S\cap A, \mathcal S_B=\mathcal S\cap B$ and $\mathcal S_{\mathcal E}=\mathcal S\cap \mathcal E$,  where $\mathcal S_A$ (resp. $ \mathcal S_B$) intersects $A\cap \mathcal E$ (resp. $B\cap \mathcal E$) along a curve which has a double point at the point $F\cap A$  (resp. $F\cap B$), these two curves are isomorphic, and 
$\mathcal S_{\mathcal E}$ is a $\mathbb P^1$--bundle over any one of them.

Accordingly, $S_{|\mathcal X_0}=2S_0$,  with $S_0\in |\mathcal L_0|$  and $S_0$ is the union of two surfaces $S_A$, $S_B$, respectively isomorphic to $\mathcal S_A, \mathcal S_B$, intersecting along a curve in $R$, that has a double point at $p$. 
\end{theorem}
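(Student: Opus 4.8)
The plan is to assemble Theorem \ref{thm-node} by carefully tracking the proper transform $\mathcal S$ through the three blow-ups performed in Steps 0--2, using the divisor class computations \eqref{step1}, \eqref{step2}, \eqref{step2-ristretto}, \eqref{normale-E}, together with Remark \ref{non-si-muove}. The heart of the matter is that the sequence of blow-ups has been engineered precisely so that the multiplicity $m_F$ of $\mathcal S^1$ along $F$ is forced to equal $1$ (as shown after \eqref{step2}), and this single normalization then rigidly determines everything.

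First I would establish the component structure of $\mathcal S^2_{|\mathcal Y^2_0}$. Since $\mathcal Y^2_0 = A'\cup\mathcal E''\cup\Theta\cup B'$ and $\mathcal S^2$ is an effective Cartier divisor linearly equivalent to $\pi_2^*\pi_1^*(\mathcal S) - 2\pi_2^*(\Gamma) - \Theta$, the restriction $\mathcal S^2_{|\mathcal Y^2_0}$ breaks up into the pieces $\mathcal S^2_{|A'}$, $\mathcal S^2_{|\mathcal E''}$, $\mathcal S^2_{|\Theta}$, $\mathcal S^2_{|B'}$, glued along the double curves. By \eqref{step2} with $m_F=1$ we have $\mathcal S^2_{|\Theta}\sim E$, and by Remark \ref{non-si-muove} the bundle $\mathcal N_{E|\Theta}$ is non-effective, so in fact $\mathcal S^2_{|\Theta}=E$ as a divisor. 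Now I would contract: $\pi_2:\mathcal Y^2\to\mathcal Y^1$ contracts $\Theta$ back to $F$, and under this the component $E=\mathcal S^2_{|\Theta}$ maps onto $F$, so on $\mathcal Y^1$ the proper transform $\mathcal S^1$ contains $F$ (consistent with Step 1), and $\mathcal S^1_{|\mathcal Y^1_0}$ has components $\mathcal S^1_{|A}$, $\mathcal S^1_{|\mathcal E'}\supset F$, $\mathcal S^1_{|B}$. Next $\pi_1:\mathcal Y^1\to\mathcal Y$ contracts $\Gamma$ to $\gamma$, and since $\mathcal S^1\sim\pi_1^*(\mathcal S)-2\Gamma$, the image $\mathcal S$ has a double point at $q=\gamma\cap F$ on $\mathcal Y_0$; its central fibre $\mathcal S_{|\mathcal Y_0}$ thus has the three components $\mathcal S_A=\mathcal S\cap A$, $\mathcal S_{\mathcal E}=\mathcal S\cap\mathcal E$, $\mathcal S_B=\mathcal S\cap B$.

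Then I would pin down the singularity of $\mathcal S_A$ along $A\cap\mathcal E$. From the computation $\mathcal S^2_{|A'\cap\mathcal E''}=\mathcal D_A\cup(\Theta\cap A'\cap\mathcal E'')$ with $\mathcal D_A\sim\pi_2^*\pi_1^*(\mathcal S)_{|A'\cap\mathcal E''}-2(\Theta\cap A'\cap\mathcal E'')$, the factor $2$ in front of the exceptional $(-1)$-curve $E\cap A'$ says exactly that, after blowing down, $\mathcal S_A\cap(A\cap\mathcal E)$ acquires a point of multiplicity $2$ at $F\cap A$ --- i.e.\ a double point --- and the analogous statement holds at $F\cap B$; the fact that $\mathcal S^2_{|\Theta}=E$ with $E$ rigid forces these two curves, being restrictions of the same $\mathcal D_A,\mathcal D_B$ pattern to the two disjoint sections $A\cap\mathcal E$, $B\cap\mathcal E$ of the $\mathbb P^1$-bundle $\mathcal E\to R$, to be isomorphic, and $\mathcal S_{\mathcal E}=\mathcal S\cap\mathcal E$ is then the $\mathbb P^1$-bundle over either of them. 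Finally I would push everything down through $\pi:\mathcal Y\to\mathcal X$: since $\pi$ is totally ramified along $A$ and $B$ and contracts $\mathcal E$ to $R$, the divisor $S=\pi(\mathcal S)$ restricts to $S_{|\mathcal X_0}=2S_0$ with $S_0=S_A\cup S_B$, $S_A\cong\mathcal S_A$, $S_B\cong\mathcal S_B$ (the ramification accounts for the multiplicity $2$, and $\mathcal S_{\mathcal E}$ is contracted), and $S_0\in|\mathcal L_0|$ because $\mathcal S\sim\pi^*(\mathcal L)$. The two isomorphic double-point curves $\mathcal S_A\cap(A\cap\mathcal E)$ and $\mathcal S_B\cap(B\cap\mathcal E)$ both map isomorphically to the same curve $C\subset R$, which therefore has a double point at $p$.

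**The main obstacle** I expect is not in the divisor bookkeeping — that is essentially already laid out in Steps 0--2 and the intervening lemmas — but in justifying the two delicate geometric claims that turn numerical data into genuine structure: first, that $\mathcal S^2_{|\Theta}$ is \emph{exactly} $E$ (not $E$ plus fibral components), which needs $\mathcal S^2_{|\Theta}$ effective of the computed class \eqref{step2} \emph{and} that no fibre $f_\Theta$ of $\Theta\to F$ can be a fixed component — here one uses that $\mathcal S^2$ does not contain $\Theta$ and that $m_F=1$ is minimal; and second, that the double point of $\mathcal S_A\cap(A\cap\mathcal E)$ produced by blowing down $\Gamma$ and $\Theta$ is the \emph{generic} outcome (the word ``in general'' in the statement), i.e.\ that no further degeneracy is forced — this is where one must invoke the hypothesis that $\mathcal S$ genuinely has double points \emph{only} along $\gamma$ and nothing worse infinitely near, so that the exceptional-curve multiplicity in $\mathcal D_A$ is exactly $2$ and the resulting curve singularity is exactly a node rather than something more degenerate.
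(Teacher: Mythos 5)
Your proposal is correct and follows essentially the same route as the paper: the paper's proof of Theorem \ref{thm-node} is precisely the analysis of Steps 0--2 (the class computations \eqref{step1}, \eqref{step2}, the forcing of $m_F=1$, the rigidity of $E$ in $\Theta$ from Remark \ref{non-si-muove}, and the decomposition $\mathcal S^2|_{A'\cap\mathcal E''}=\mathcal D_A\cup(\Theta\cap A'\cap\mathcal E'')$ with $\mathcal D_A$ meeting the exceptional curve with multiplicity $2$), followed by blowing everything back down exactly as you describe. The two "obstacles" you flag are indeed the delicate points, and they are resolved the same way the paper resolves them (non-effectivity of $\mathcal N_{E|\Theta}$, and the hypothesis that $\mathcal S$ has double points only along $\gamma$).
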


 \begin{figure}[h]
\noindent
\includegraphics[width=7 cm]{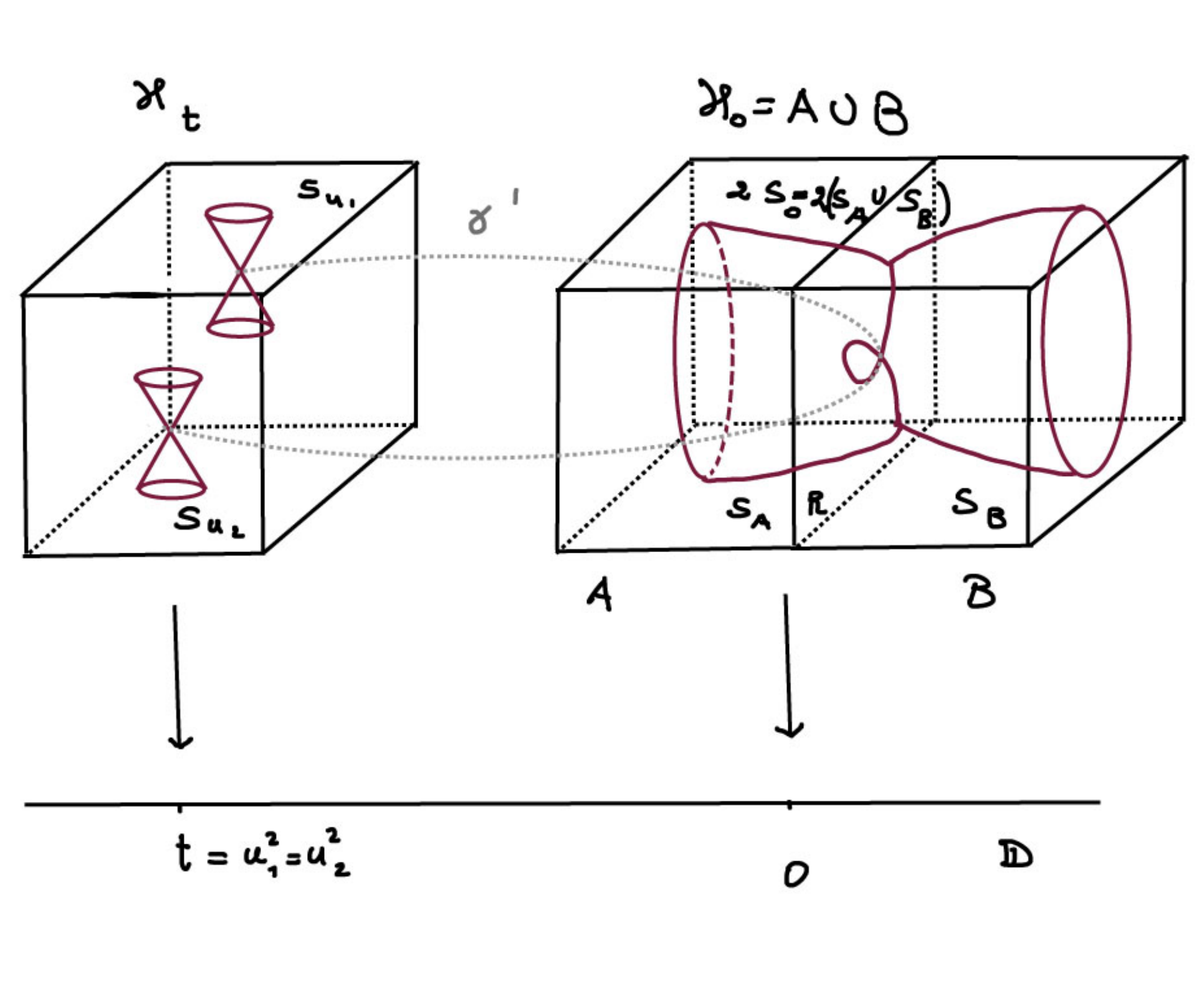}
\caption{}
\end{figure}

\subsubsection{Local equations of  $S_0$ }\label{ssec:node} 
 We may assume that $\mathcal X$ locally around $p\in \mathcal X_0$ is embedded in $\mathbb A^5$ with coordinates $(x,y,z,u,t)$ with $p$ corresponding to the origin.  We may suppose  that $\mathcal X$ is defined by the equation $xy=t$ and the map $\mathcal X\to \mathbb D$ is given by $(x,y,z,u,t)\mapsto t$. So we will assume that $A$ is defined by the equations $x=t=0$ and $B$ by the equations $y=t=0$, so that $R$ is defined by $x=y=t=0$.

The above analysis proves that the surfaces $\mathcal S|_A$ and $\mathcal S|_B$ belong to the restriction linear systems  of $\mathcal L$ to 
$A$ and $B$, respectively, and moreover are tangent to $R$ at the point $p$. 
Thus $S_0=S_A\cup S_B$ belongs to the linear system 
$\mathcal L_0(2,p)\subset |\mathcal L_0|$ of surfaces with local equations at $p$
given by  
\begin{eqnarray}\label{limit-equation-node}
\left\{\begin{array}{l}
(a_1x+b_1y)+f_2(x,y,z,u)=0\\
xy=0,
\end{array}\right.
\end{eqnarray}
with $f_2(x,y,z,u)$ an analytic function with terms of degree at least $2$.

\begin{definition}\label{def:t} Let $S_0=S_A\cup S_B$ be a surface that is the union of two irreducible components $S_A,S_B$ intersecting along a curve $C$. Let $p\in C$. We will say that $S$ has at $p$ a {\it singularity of type $T_1$} if 
$S_A$ and $S_B$ are smooth at $p$ and $C$ has at $p$ a node.
\end{definition}

\begin{remark}\label{rm:general-singularity}   If in \eqref {limit-equation-node}, $a_1b_1\neq 0$, then $S_0$ has a $T_1$ singularity at the origin $p$, and, up to a linear change of coordinates, the local equations  are given by
\begin{eqnarray}\label{eq:tacnode}
\left\{\begin{array}{l}
x+y+f_2(x,y,z,u)=0,\,\,{\rm{with}}\,\, f_2(0,0,z,u)=0\,\,{\rm{having\, a\, node\, at}}\,\,\underline 0,\\
xy=0.\,\, 
\end{array}\right. 
\end{eqnarray}
\end{remark}
 In the sequel we will also refer to $\mathcal L_0(2,p)$ as the sublinear system of $|\mathcal L_0|$ of surfaces with
at least a $T_1$ singularity at $p.$

\begin{remark}\label{condizioni-tacnodo}
  We have that $\mathcal L_0(2,p)\subset|\mathcal L_0|$ has dimension
$$
\dim(\mathcal L_0(2,p))\geq \dim|\mathcal L_0|-3.
$$

\end{remark}

\subsection{Local deformation of a singularity of type $T_1$ to a node}\label{ssec:loc}

In \S \ref {ssec:node} we saw that a singularity of type $T_1$ appears as a {\it generic} limit of a double point of a surface. In this section we will show that locally the converse happens, i.e., that locally a singularity of type $T_1$ can be deformed to a node.   

 In local coordinates $(x,y,z,u,t)$, we consider as before the
family of $3$-folds
$\mathcal X_t:xy=t.$
We further consider the one parameter family of $3$-folds in $\mathbb A^4$ of local equation at $\underline 0$ given by
$$
\mathcal S_{\alpha} : x-y-\alpha(t)-z^2-u^2=0,
$$
where $\alpha(t)$ is a suitable function of $t\in \mathbb A^1$ to be determined, such that 
$\alpha(0)=0$. We will set $S_t=\mathcal S_{\alpha(t)}\cap \mathcal X_t$ for any $t\in \mathbb A^1$.
The surface $S_0$ has a $T_1$ singularity at $\underline 0$ and $\mathcal S_{\alpha}$ is smooth.
Our  requirement on the function $\alpha(t)$ is that for any $t\neq 0$ there exists a singular point $q(t)=(x(t),y(t),z(t),u(t))\in S_t$, i.e.,  such that
$$
T_{q(t)}(\mathcal S_\alpha) = T_{q(t)} (\mathcal X_t). 
$$
This is equivalent to ask that there exists $q(t)=(x(t),y(t),z(t),u(t))\in \mathbb A^4$ satisfying
$$
x(t)-y(t)-\alpha(t)-z(t)^2-u(t)^2  = x(t)y(t) - t = 0
$$
and
$$
(x-x(t))-(y-y(t))-2z(t)(z-z(t))-2u(t)(u-u(t))= c(t)\Big(y(t)(x-x(t))+x(t)(y-y(t))\Big),$$
for a non-zero  $c(t)$. This implies 
$$z(t)=u(t)=0, x(t)=-y(t), \alpha(t)=2x(t)\,\,{\rm and}\,\, t=-x(t)^2=-\frac{{\alpha(t)}^2}{4}.$$
Thus, for every $t\neq 0$ there exist exactly two divisors $\mathcal S_{\alpha_i}$, with $i=1,2$ and
$\alpha_i(t)^2=-4t$ so that 
\begin{eqnarray*}
S_{\alpha_i(t)}=\mathcal S_{\alpha_i}\cap\mathcal Y_t: \left\{\begin{array}{l}
x=y+\alpha_i(t)+z^2+u^2\\
 y(y+\alpha_i(t)+z^2+u^2)=t
 \end{array}\right. 
\end{eqnarray*}
is a one-nodal surface, with tangent cone at $q_i(t)=(\frac{\alpha_i(t)}{2},-\frac{\alpha_i(t)}{2},0,0)$ given by
$$
TC_{q_i(t)}(S_{\alpha_i(t)}):x-y-\alpha_i(t)=2\Big(y+\frac {\alpha_i(t)}2\Big)^2-\alpha_i(t)z^2-\alpha_i(t)u^2=0.
$$

Notice that, for every $i=1,2$, we have that $\alpha_i(t)$ is a well defined continuos function on $\mathbb D_\epsilon^o=\mathbb D(\underline 0,\epsilon)\setminus\{a+i0\,|\,  0<a<\epsilon\}$ (the disk cut along a radius), 
vanishing at $0$ and holomorphic on $\mathbb D_\epsilon^o\setminus \underline0$. Each family $\mathcal S_{\alpha_i}\to \mathbb D_{\epsilon}^o$,  for $i=1,2$, is not algebraic, while 
the complete intersection family of surfaces 
\begin{eqnarray}
\mathcal D: \left\{\begin{array}{l}
(x-y-u^2-z^2)^2=-4t\\
xy=t.
\end{array}\right. 
\end{eqnarray} 
is algebraic. As usual we set $D_t=\mathcal D\cap \mathcal X_t$. One has $D_t=S_{\alpha_1(t)}\cup S_{\alpha_2(t)}$ for $t\neq 0$ and
non--reduced fibre $D_0=2S_0$ for $t=0$.

The  locus $x^2+t=x+y=z=u=0$, whose general point is singular for $D_t$,  is a bisection of $\mathcal X\to\mathbb A^1$  passing through $(\underline 0,0)$.

\section{ Deformations of surfaces with  $T_1$ singularities and nodes} \label{deformations}

 Throughout this section we will consider $\mathcal X\to\mathbb D$ a family of projective complex 3-folds 
over a disc $\mathbb D$ as in the previous section and we let $\mathcal H^{\mathcal X|\mathbb D}$ be its relative Hilbert scheme,
 whose fiber over $t\in\mathbb D$ is the Hilbert scheme of $\mathcal X_t$ and it is denoted by $\mathcal H^{\mathcal X_t}$. Moreover we will consider $S_0=S_A\cup S_B\subset\mathcal X_0$, with $S_A\subset A$ and $S_B\subset B$ an effective reduced Cartier divisor.

\subsection{Deformations and a smoothness criterion}

\begin{definition}\label{def: deformation}
Let $S_0=S_A\cup S_B\subset\mathcal X_0$, with $S_A\subset A$ and $S_B\subset B$ be an effective reduced Cartier divisor and let 
$$\mathcal H_{[S_0]}^{\mathcal X|\mathbb D}\to\mathbb D$$
be an irreducible component of the relative Hilbert scheme of $\mathcal X$ containing $[S_0]$. 
{\it A deformation of $S_0$ in $\mathcal X$ not in $\mathcal X_0$} is the total space $S\subset\mathcal X$
of an irreducible local $r$-multisection $\gamma$ of $\mathcal H_{[S_0]}^{\mathcal X|\mathbb D}$ passing through $[S_0]$. 
Equivalently, a deformation of $S_0$ is an effective divisor 
\begin{displaymath}
\xymatrix{
S\,\,\,\ar[dr] \ar@{^{(}->}[r]&  
\mathcal X\ar[d]\\
& \mathbb D }
\end{displaymath}
 dominating $\mathbb D$, whose central fibre is $S\cap\mathcal X_0=rS_0$, i.e., the surface $S_0$ counted with multiplicity $r$, and whose general fibre 
is a reduced surface with $r$ irreducible components $S\cap\mathcal X_t=S_t^1\cup \cdots \cup S_t^r$, with $[S_t^i]\in\mathcal H_{[S_0]}^{\mathcal X|\mathbb D}$,
for every $i=1,\ldots, t$. We will also say that every irreducible component $S_t^i$ of $S\cap\mathcal X_t$ is a deformation of $S_0$ or that $S_0$ is a limit of $S_t^i$.  Let $\mathcal Y$ be the smooth family of threefolds obtained from $\mathcal X\to\mathbb D$ after a base change 
\begin{displaymath}
\xymatrix{
\mathcal Y\ar[r]\ar[dr]&  \mathcal X^\prime \ar[d]\ar[r] &
\mathcal X\ar[d]\\
&  \mathbb D \ar[r]^{\nu_r}& \mathbb D}
\end{displaymath}
of order $r$ and after minimally desingularizing  the total space of the obtained family. $\mathcal Y$ has central fibre $\mathcal Y_0=A\cup\mathcal E_1\cup\dots\cup\mathcal E_{r-1}\cup B$ with normal crossing singularities of multiplicity two, where every $\mathcal E_i$ is a $\mathbb P^1$-bundle
over $\mathcal E_{i-1}\cap\mathcal E_i$, intersecting $\mathcal E_{i-1}$ and $\mathcal E_{i+1}$, with $\mathcal E_0=A$ and $\mathcal E_{r}=B$. We denote by $\pi:\mathcal Y\to\mathcal X$
the induced morphism. Then the pullback divisor $\pi^*(S)=\mathcal S^1\cup\dots\cup \mathcal S^r$ has $r$ irreducible distinct components
$\mathcal S^1,\dots, \mathcal S^r$, where now every $\mathcal S^i$ has irreducible general fibre
and has central fibre given by $\mathcal S^i_0=\mathcal S^i\cap \mathcal X_0\cong S_0$.  
\end{definition}

\begin{proposition}\label{lm:transverse}
Let $S_0=S_A\cup S_B\subset\mathcal X_0$, with $S_A\subset A$ and $S_B\subset B$, be   a reduced  effective 
Cartier   divisor as above. Let $p$ be a point of the intersection curve
 $C=S_A\cap S_B\subset R$ where $S_A$ and $S_B$ intersect transversally, i.e., such that $S_A$ and $S_B$ are smooth at $p$ and  $T_p(S_A)\neq T_p(S_B).$
Then for every deformation $S\subset\mathcal X$ of $S_0$ not in $\mathcal X_0$, we have that $p$ is limit only of smooth points of the irreducible components of the general fibre of $S$, i.e., in a sufficiently small analytic neighborhood of $p$ in $\mathcal X$, all irreducible components of the general fibre of $S$ are smooth.
In particular, if $S_A$ and $S_B$ intersect transversally along $C$, then $S_0$ is limit only of smooth surfaces.
\end{proposition}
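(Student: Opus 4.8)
The plan is to work locally analytically near $p$, using the explicit normal form for a transverse intersection point. Since $S_A$ and $S_B$ are smooth at $p$ with $T_p(S_A)\neq T_p(S_B)$, and both are Cartier divisors in $\mathcal X_0=A\cup B$ with $A,B$ meeting along $R$, I can choose analytic coordinates $(x,y,z,u,t)$ on $\mathcal X\subset\mathbb A^5$ near $p$, with $\mathcal X:\ xy=t$, $A:\{x=t=0\}$, $B:\{y=t=0\}$, $R:\{x=y=t=0\}$, exactly as in \S\ref{ssec:node}. In these coordinates $S_A$ is cut out on $A$ by an equation of the form $x=t=0,\ g_A(y,z,u)=0$ with $g_A$ having nonzero linear part $\ell_A(y,z,u)$, and similarly $S_B$ by $y=t=0,\ g_B(x,z,u)=0$ with nonzero linear part $\ell_B(x,z,u)$. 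The curve $C=S_A\cap S_B\subset R$ is then $\{x=y=t=0\}\cap\{\ell_A(y,z,u)+\cdots=0\}\cap\{\ell_B(x,z,u)+\cdots=0\}$; writing $\ell_A = a_0 y + \ell(z,u)+\cdots$ and $\ell_B = b_0 x + \ell'(z,u)+\cdots$, the transversality hypothesis $T_p(S_A)\neq T_p(S_B)$ translates into the statement that $S_0=S_A\cup S_B$ has local equation, up to analytic change of coordinates and scaling, of the form
\begin{eqnarray}\label{eq:transverse-normal-form}
\left\{\begin{array}{l}
(a_1 x + b_1 y) + f_2(x,y,z,u) = 0,\\
xy = 0,
\end{array}\right.
\end{eqnarray}
exactly as in \eqref{limit-equation-node}, but now with the crucial feature that the two sheets are \emph{distinct} at $p$. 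The key point to extract is that transversality forces the linear form $a_1 x + b_1 y$ (on $\{xy=0\}$, i.e. the restriction to $A$ gives $b_1 y$ and to $B$ gives $a_1 x$) to be nonzero on \emph{each} branch; concretely one can arrange $S_0:\ (x - y) + (\text{higher order}) = 0$ inside $xy=0$ where the higher-order terms are chosen so that there is \emph{no} node on $C$ — i.e. we are in the non-$T_1$ situation of Remark \ref{rm:general-singularity}.

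Next I would analyze an arbitrary deformation $S\subset\mathcal X$ of $S_0$ not in $\mathcal X_0$. Its central fibre is $rS_0$, so the equation of $S$ near $p$ has the form $F(x,y,z,u,t)=0$ with $F|_{t=0}=G(x,y,z,u)^r$, where $G=0$ is a local equation of $S_0$ on $\mathcal X_0$, i.e. $G$ restricts on $A$ to $b_1 y + (\text{h.o.t.})$ and on $B$ to $a_1 x + (\text{h.o.t.})$. The heart of the argument is: after the base change of order $r$ and the minimal desingularization $\pi:\mathcal Y\to\mathcal X$ from Definition \ref{def: deformation}, each irreducible component $\mathcal S^i$ of $\pi^*(S)$ has reduced central fibre $\cong S_0$, and near the point $\pi^{-1}(p)\cap \mathcal S^i$ I must show $\mathcal S^i$ meets the generic fibre $\mathcal X_t$ in a surface that is smooth near $p$. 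Equivalently, I can avoid the base change and argue directly: I must show that the surface $S\cap\mathcal X_t$ cannot acquire a singular point specializing to $p$. Suppose it did; then there would be a multisection $\gamma'$ of $\mathcal X\to\mathbb D$ through $p$ along which $S$ (or rather the relevant component upstairs) is singular. Running the blow-up analysis of \textbf{Step 0}--\textbf{Step 2} of \S\ref{ssec:surf} with this $\gamma'$: the divisor $\mathcal S^2|_\Theta$ would have to be effective, the minimal value $m_F=1$ forces $\mathcal S^2|_{\Theta}=E$ as in Remark \ref{non-si-muove}, and — tracing back through \eqref{step1}, \eqref{step2}, \eqref{step2-ristretto} — the resulting restriction $\mathcal S|_A$, $\mathcal S|_B$ would be \emph{tangent} to $R$ at $p$, i.e. $S_0$ would have to lie in the linear system $\mathcal L_0(2,p)$ and hence have a $T_1$ (or worse) singularity at $p$. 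This contradicts the transversality of $S_A$ and $S_B$ at $p$: a $T_1$ singularity requires the intersection curve $C$ to have a node at $p$, whereas transversality of the \emph{surfaces} at $p$ is incompatible with both sheets being tangent to $R$.

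I would package this cleanly as follows. (i) Reduce to the local analytic picture \eqref{eq:transverse-normal-form} and record that transversality $\iff$ the two branches of $S_0$ at $p$ are not both tangent to $R$ $\iff$ $[S_0]\notin\mathcal L_0(2,p)$ locally at $p$. (ii) Given any deformation $S$ not in $\mathcal X_0$, and any irreducible component $S_t^i$ of $S\cap\mathcal X_t$, suppose for contradiction that some sequence of singular points $q_t\in S_t^i$ converges to $p$. Pass to the relevant component $\mathcal S^i$ on $\mathcal Y$ (Definition \ref{def: deformation}); its central fibre is reduced $\cong S_0$, and the singular locus of the fibres sweeps out a multisection through a point of $F$. (iii) Apply verbatim the computation of \S\ref{ssec:surf}, \textbf{Step 0}--\textbf{Step 2}: effectivity of $\mathcal S^2|_\Theta$ plus $\mathcal S^2|_\Theta \sim (2m_F-2)f_\Theta + m_F E$ forces $m_F=1$, and then the description of $\mathcal S^2|_{A'\cap\mathcal E''}$, $\mathcal S^2|_{B'\cap\mathcal E''}$ in the paragraph preceding Theorem \ref{thm-node} shows $\mathcal S^i|_A$ and $\mathcal S^i|_B$ are tangent to $R$ at $p$, hence $S_0$ has a $T_1$ singularity at $p$ by Definition \ref{def:t} — contradiction. (iv) Conclude that near $p$ every component of $S\cap\mathcal X_t$ is smooth; the last sentence (if transversality holds all along $C$) follows by covering $C$ by such neighborhoods and using that away from $C$ the surfaces $S_A$, $S_B$ are already smooth and disjoint, so $S_0$ is a limit only of smooth surfaces.

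The main obstacle I expect is step (ii)--(iii): making rigorous the passage from "a singular point of the generic fibre specializes to $p$" to "there is a multisection $\gamma'$ through $p$ along which (the relevant lift of) $S$ is singular", so that the blow-up machinery of \S\ref{ssec:surf} actually applies. One must check that the singular point of $S_t^i$ indeed traces out a (multi)section hitting the central fibre at a \emph{smooth} point of the exceptional $\mathbb P^1$-bundle $\mathcal E$ over $p$ (equivalently, at a point of the fibre $F$), rather than landing on $A\cap\mathcal E$ or $B\cap\mathcal E$ or escaping to infinity in the blow-up; this is where one uses that $\mathcal X$ is smooth at $p$ while $\mathcal X_0$ has an ordinary double point there, so no genuine section passes through $p$, only bisections (or $r$-sections upstairs). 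Once that reduction is in place, the rest is the already-performed computation of Section \ref{sect: getting triple point}, read in contrapositive form.
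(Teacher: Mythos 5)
Your overall strategy is genuinely different from the paper's: you want to read the blow-up analysis of Section \ref{sect: getting triple point} in contrapositive form (a singular point of the general fibre limiting to $p$ would force $S_0$ to be tangent to $R$ at $p$), whereas the paper's proof is a short, direct first-order computation. Before discussing the main gap, note a concrete error in your step (i): the normal form you display, $(a_1x+b_1y)+f_2(x,y,z,u)=0$ on $xy=0$, is the normal form of the \emph{tangential} case, not the transverse one. If the linear part of the local equation restricts to $b_1y$ on $A$ and to $a_1x$ on $B$ with $a_1b_1\neq 0$, then $T_p(S_A)=\{y=0\}\cap T_p(A)=T_p(R)=T_p(S_B)$, which is exactly the negation of the hypothesis; moreover with your suggested equation $(x-y)+(\text{h.o.t.})=0$ the curve $C$ is cut on $R$ by a function of order at least $2$ in $(z,u)$, hence is singular at $p$, again contradicting transversality. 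The correct normal form, and the one the paper uses, is $z+f_2(x,y,z,u)=0$ on $xy=0$: transversality is equivalent to the linear part of the defining function having a nonzero component along $R$, i.e.\ to $S_0$ not lying in $\mathcal L_0(2,p)$ locally at $p$ --- so your verbal characterization in (i) is right, but your displayed equation contradicts it.

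The genuine gap is the one you yourself flag at the end: the reduction from ``some singular point of a component of $S\cap\mathcal X_t$ specializes to $p$'' to the setup of {\bf \em Step 0}--{\bf \em Step 2} of \S\ref{ssec:surf}, namely a genuine \emph{section} $\gamma$ of $\mathcal Y\to\mathbb D$ meeting the central fibre at a point of $F$ away from $A$ and $B$, along which a divisor $\mathcal S\sim\pi^*(\mathcal L)$ has double points, with $m_F=1$. For an arbitrary deformation of multiplicity $r$ the singular locus is only an $s$-multisection with $s\geq r$, the central fibre of $\mathcal Y$ is a chain $A\cup\mathcal E_1\cup\dots\cup\mathcal E_{r-1}\cup B$ rather than the three-component fibre of Section \ref{sect: getting triple point}, and one must exclude that the limit point lands on $A\cap\mathcal E$ or $B\cap\mathcal E$ and that $m_F>1$; none of this is carried out, and it is essentially the content of the proposition. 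The paper avoids all of it: it takes the continuous section $\gamma^1(t)=(x(t),y(t),z(t),u(t))$ of singular points over the cut disc (holomorphic off the slit), observes that singularity of $\mathcal Z_t=\{p(\cdot;t)=0\}\cap\{xy=t\}$ at $\gamma^1(t)$ forces $dp|_{\gamma^1(t)}$ to be proportional to $y(t)\,dx+x(t)\,dy$, so that the linear part of $p(\cdot;t)$ at $\gamma^1(t)$ has no $dz$ or $du$ component for any $t\neq 0$, and lets $t\to 0$: the limit $p(\cdot;0)=z+f_2$ has linear part $dz$, a contradiction. You should either supply the missing reduction (which is substantial) or switch to this elementary tangent-space argument, which needs no blow-ups at all.
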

\begin{proof}
Let $S_0=S_A\cup S_B\subset\mathcal X_0$ and $p\in R=S_A\cap S_B$ as in the statement. Then, there exists an analytic coordinate system $(x,y,z,u,t)$ of
$\mathcal X$ at $p=\underline 0$  and such that the local equation of $S_0$ at $p$ is given by $xy=t=z+f_2(x,y,z,u)=0$, where $f_2(x,y,z,u)\in (x,y,z,u)^2$.  

Assume that the assertion is not true. Let $\pi: \mathcal Y\to \mathcal X$ be the morphism defined in  Definition
\ref{def: deformation}, from which we keep the notation.  Then the chain of fibres $F_p^1\cup\dots\cup F_p^{r-1}$ of $\pi^{-1}(S_0)$ contracted to $p$ by $\pi$, intersects  the singular locus of every divisor $\mathcal S^i$. In particular there exists an analytic $s$-multisection $\gamma$ of $\mathcal X$ (with $s\geq r$) passing through $p$, whose general point
is a singular point of an irreducible component of  $S\cap\mathcal X_t$,  with $t$ general.  Every analytic $s$-multisection of $\mathcal X\to\mathbb D$ at $p$ gives rise to $s$ distinct
continuos sections $\gamma^1,\dots,\gamma^s$ over $\mathbb D^o=\mathbb D\setminus\{a+i0\,|\,  0<a<1\}$, which are holomorphic on $\mathbb D^o\setminus\underline 0$.
If $t$ varies in $\mathbb D^o$, then there exists a one-parameter analytic family of irreducible surfaces $\mathcal Z\subset S$, singular along $\gamma^1$,  whose fibre $\mathcal Z_t$ over $t\neq 0$ is an irreducible component of $S\cap\mathcal X_t$ and whose fibre over $0$ is $\mathcal Z_0=S_0$. The equation of $\mathcal Z_t$ in $\mathbb A^4$ with coordinates $(x,y,z,u)$ is given by 
\begin{eqnarray*}
\left\{\begin{array}{l}
p(x,y,z,u;t)=0\\
xy=t,\,\, 
\end{array}\right. 
\end{eqnarray*}
where $p(x,y,z,u;t)=0$ is an analytic function in $(x,y,z,u)$, whose coefficients are continuos functions in the variable $t\in\mathbb D^o$ which are holomorphic on $\mathbb D^o\setminus \underline 0$. If
$$\gamma^1(t)=(x(t),y(t),z(t),u(t)),$$ then, by the hypothesis that the general fibre of $\mathcal Z$ is singular along $\gamma$,
we have that 
$$
p(x,y,z,u;t)=c(t)(y(t)(x-x(t))+x(t)(y-y(t)))+g_2(x-x(t),y-y(t),z-z(t),u-u(t)),
$$
where  $g_2(x-x(t),y-y(t),z-z(t),u-u(t))\in (x-x(t),y-y(t),z-z(t),u-u(t))^2$.

We moreover have that $p(x,y,z,u;t)$ specializes to $p(x,y,z,u;0)=z+f_2(x,y,z,u)\\
=z-z(0)+f_2(x-x(0),y-y(0),z-z(0),u-u(0))$ as $t$ goes to $0$. This is 
not possible. Thus every irreducible component of the general fibre of a deformation $S\subset\mathcal X$ of $S_0$ is smooth in a neighborhood of $p$.\end{proof}

\subsection{Deformations of $T_1$ singularities}

\subsubsection{ Deformations not in $\mathcal X_0$ of surfaces with $T_1$ singularities.}

 In this section we prove that the only singularity of a surface in $\mathcal X_t$, with $t\neq 0$, 
to which a $T_1$ singularity of a surface $S_0\subset\mathcal X_0$ may be deformed is a node. 

\begin{lemma}\label{lm:solo-nodo}
 Let $S_0=S_A\cup S_B\subset\mathcal X_0$ be a reduced effective Cartier divisor, with $S_A\subset A$ and $S_B\subset B$  as above. Let $p$ be a point of the intersection curve $C=S_A\cap S_B\subset R$ where $S_0$ has a $T_1$ singularity. Let $ S\subset\mathcal X$ be a deformation of $S_0$ not in $\mathcal X_0$. Then there exists 
 a sufficiently small analytic neighborhood of $p$ in $\mathcal X$ such that all irreducible components of the general fibre of $ S$ in that neighborhood are smooth or 
  are  $1$-nodal.
\end{lemma}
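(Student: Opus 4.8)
The plan is to argue locally at $p$, in the style of the proof of Proposition~\ref{lm:transverse}, by writing down an explicit analytic local model and analysing which singularities can survive in the nearby fibres. Choose analytic coordinates $(x,y,z,u,t)$ on $\mathcal X$ at $p=\underline 0$ so that $\mathcal X$ is $xy=t$, the map to $\mathbb D$ is $t$, and $S_0$ has the $T_1$ normal form \eqref{eq:tacnode}, i.e. local equations $x+y+f_2(x,y,z,u)=0$, $xy=0$, with $f_2(0,0,z,u)$ having a node at the origin. After the base change $\nu_r$ and desingularization of Definition~\ref{def: deformation}, let $F_p^1\cup\dots\cup F_p^{r-1}$ be the chain of rational curves over $p$; as in Proposition~\ref{lm:transverse}, any irreducible component $\mathcal Z$ of the deformation $S$ whose general fibre is singular near $p$ must be singular along an analytic multisection $\gamma$ through $p$, giving continuous sections $\gamma^j(t)=(x(t),y(t),z(t),u(t))$ over a slit disc, holomorphic off $\underline 0$, with $\gamma^j(0)=\underline 0$. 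Writing $\mathcal Z_t$ as $p(x,y,z,u;t)=0$, $xy=t$, the singularity condition forces
\[
p(x,y,z,u;t)=c(t)\bigl(y(t)(x-x(t))+x(t)(y-y(t))\bigr)+g_{\ge2}(x-x(t),y-y(t),z-z(t),u-u(t)),
\]
and the specialization $p(x,y,z,u;0)=x+y+f_2(x,y,z,u)$ constrains $c(t)$, $x(t)$, $y(t)$, $z(t)$, $u(t)$ and the quadratic part of $g_{\ge2}$ as $t\to0$.

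The first key step is to extract, from this specialization together with $x(t)y(t)=t$, the leading behaviour of the section: matching the linear parts forces $c(t)\to$ a finite nonzero-or-zero limit and pins down $x(t)+y(t)$, $z(t)$, $u(t)$ to leading order, while $x(t)y(t)=t$ controls the difference $x(t)-y(t)$; one finds, just as in \S\ref{ssec:loc}, that the section is governed by $x(t)\approx -y(t)$ with $x(t)^2\sim -t$, so it is a genuine bisection branch and $z(t),u(t)\to 0$ of higher order. The second step is to compute the tangent cone of $\mathcal Z_t$ at $\gamma^j(t)$ for $t\ne0$: substituting the local model of $\mathcal X_t$ (which near $\gamma^j(t)$ is smooth, with tangent space $y(t)(x-x(t))+x(t)(y-y(t))=0$) into the Hessian of $p(\,\cdot\,;t)$ and using that the $z$- and $u$-directions inherit the nondegenerate quadratic form coming from the node of $f_2(0,0,z,u)$, one gets a rank-$3$ quadric — an $A_1$ singularity — exactly as in the explicit example $TC_{q_i(t)}(S_{\alpha_i(t)})$ of \S\ref{ssec:loc}. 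The third step is to rule out anything worse than a node: because the quadratic form in $(z,u)$ is nondegenerate (it is the node of $f_2$, which is preserved to leading order under the specialization) and the restriction to the smooth hypersurface $\mathcal X_t$ kills exactly one further variable nondegenerately, the Hessian of $\mathcal Z_t$ at $\gamma^j(t)$ is of maximal rank $3$ on $\mathcal X_t$ for $t$ small and $\ne0$; hence the singular point is an ordinary double point, and there is at most one such point per branch in a small neighbourhood. Finally, since the $r-1$ contracted fibres $F_p^i$ meet the components $\mathcal S^i$, each component of the general fibre of $S$ near $p$ is either smooth (if it is not singular along any multisection through $p$) or $1$-nodal (by the above), which is the claim.

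The main obstacle I anticipate is the third step: showing that the surviving singularity is \emph{exactly} a node and not a degenerate double point (or several coalescing nodes). This requires controlling the rank of the quadratic part of $p(\,\cdot\,;t)$ restricted to $\mathcal X_t$ uniformly as $t\to0$, i.e. verifying that the nondegeneracy of the node of $f_2(0,0,z,u)$ is not destroyed by the lower-order corrections encoded in $g_{\ge2}$ and by the fact that the sections $z(t),u(t)$ are only continuous (not holomorphic) on the slit disc. Concretely one must check that the limit quadratic form is the direct sum of the rank-$1$ form coming from $x+y$ (which, intersected with the rank-$2$ tangent cone $xy=t$ of $\mathcal X_t$, produces one nondegenerate square) and the rank-$2$ node in $(z,u)$, giving total rank $3$; a perturbation argument (the set of rank-$3$ quadrics is open) then finishes it. One should also note the possibility that $c(t)\to0$, which would a priori allow the limit tangent cone to degenerate, and check that in that regime the constraint $x(t)y(t)=t$ together with the specialization still forbids a worse singularity, or else that this regime simply does not occur.
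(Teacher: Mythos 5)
Your strategy is the same as the paper's: reduce to isolated singularities via Proposition \ref{lm:transverse}, parametrize the putative singular points by an analytic multisection $q(t)$ over a slit disc, impose the tangency condition $T_{q(t)}(\mathcal S_t)=T_{q(t)}(\mathcal X_t)$, and decide whether the singularity of $S_t=\mathcal S_t\cap\mathcal X_t$ at $q(t)$ is a node by computing the rank of a Hessian. However, the decisive step is exactly the one you defer to your ``main obstacle'' paragraph, and the way you propose to handle it does not work as stated. First, the regime $c(t)\to 0$ is not an exceptional case to be excluded: it always occurs, since $x(0)=y(0)=0$ while $\frac{\partial p}{\partial x}$ and $\frac{\partial p}{\partial y}$ are nonzero near $\underline 0$, so the proportionality relations $y(t)=c(t)\frac{\partial p}{\partial x}|_{q(t)}$, $x(t)=c(t)\frac{\partial p}{\partial y}|_{q(t)}$ force $c(0)=0$. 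Second, because of this the Hessian of the local equation of $S_t$ at $q(t)$ (obtained after eliminating $x-x(t)$ via the equation of $\mathcal X_t$) degenerates entrywise as $t\to 0$, so ``the set of rank-$3$ quadrics is open'' cannot be invoked directly: the naive limit of the Hessian is the zero matrix, not a nondegenerate form. The paper's resolution is a precise rescaling: one checks that the first column of the Hessian $H_{q(t)}$ is divisible by $c(t)$ and the second and third columns by $c(t)^2$, divides accordingly to obtain a matrix $B_{q(t)}$, and shows that $B_{q(t)}$ tends to a block-triangular matrix $B_{\underline 0}$ whose determinant is (up to the unit $\frac{\partial p}{\partial x}|_{\underline 0}=\frac{\partial p}{\partial y}|_{\underline 0}=1$) the discriminant of $p_2(0,0,z,u)$, nonzero by the $T_1$ hypothesis; hence $\det H_{q(t)}\ne 0$ for small $t\ne 0$. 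Without this rescaling argument the proof is incomplete.

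A secondary inaccuracy: your ``first key step'' asserts that the section is governed by $x(t)\approx -y(t)$ with $x(t)^2\sim -t$, imported from the explicit example of \S\ref{ssec:loc}. That asymptotic is an artifact of the particular family chosen there; in general the proof neither establishes nor needs it. What is actually used is only the proportionality relations above, the vanishing of $\frac{\partial p}{\partial z}$ and $\frac{\partial p}{\partial u}$ at $q(t)$, and $c(0)=0$. I would also note that your closing sentence silently upgrades ``every singular point limiting to $p$ is a node'' to ``each component is $1$-nodal''; if you want to claim at most one node per component near $p$, say explicitly that the singular locus of $S$ near $p$ is the single multisection $\gamma$, so each branch contributes one singular point in a sufficiently small neighborhood.
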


\begin{proof}
By Proposition \ref{lm:transverse}, if $ S\subset\mathcal X$ is any deformation of $S_0$ not in $\mathcal X_0$, then all irreducible components of the general fibre of $S$
have, in a sufficiently small neighborhood of $p$, only isolated singularities. We  want to prove that if the $T_1$ singularity of $S_0$ at $p$ is limit of an isolated singularity, then this is a node. We argue as in the proof of Proposition \ref{lm:transverse}.

Let $\mathbb D_\epsilon=\mathbb D(\underline 0,\epsilon)\subset\mathbb A^1$ be the open disc with center at the origin and radius $\epsilon$ and
let $\mathbb D_\epsilon^o=\mathbb D(\underline 0,\epsilon)\setminus\{a+i0\,|\,  0<a<\epsilon\}$. We denote by $t=a+ib$ the coordinate on $\mathbb D_\epsilon$
and by $(x,y,z,u)$ the coordinates 
 in $\mathbb A^4$. In $\mathbb A^4\times \mathbb D_\epsilon^o$ we consider a one parameter family of 
$3$-folds
$$
\mathcal S_t: p(x,y,z,u;t)=0,\,\, t\in \mathbb D_\epsilon^o,
$$
where $p(x,y,z,u;t)$ is a polynomial in $x,y,z,u$ whose coefficients are holomorphic functions  on 
$\mathbb D_\epsilon^o\setminus \underline 0$, continuos in $\underline 0$, and the one parameter family of 
$3$-folds
$$
\mathcal X_t: xy=t,\,\, t\in \mathbb D_\epsilon^o.
$$
Assume that the surface
\begin{eqnarray}
S_0=\mathcal S_0\cap \mathcal X_0: \left\{\begin{array}{l}
p(x,y,z,u;0)= x+y+p_2(x,y,z,u)+o(3)=0\\
xy=0\,\, 
\end{array}\right. 
\end{eqnarray}
has a $T_1$ singularity at $\underline 0$, where $p_2(x,y,z,u)$ is the homogeneous part of degree $2$ of  $p(x,y,z,u;0)$, where $o(3)$ is the sum
of terms of degree at least $3$ in $p(x,y,z,u;0)$, and where, by assumption, $p_2(0,0,z,u)$ has non-zero discriminant. 

Assume that, for $t\neq 0$, there exists $q(t)=(x(t), y(t),z(t),w(t);t)\in S_t=\mathcal S_t\cap \mathcal X_t$ specializing to $\underline 0$, as $t$ goes to $\underline 0$ and 
such that $S_t$ has a singular point at $q(t)$. Thus, $\mathcal S_t$ is smooth at $q(t)$ since $\mathcal S_0$ is smooth at $q(0)=\underline 0$ and we have that 
$$
T_{q(t)}(\mathcal S_t)= T_{q(t)}(\mathcal X_t).
$$
In particular there exists a function $c(t)$, which is non-zero if $t\neq 0$, such that
\begin{eqnarray*}
y(t)(x-x(t))+x(t)(y-y(t))& = &c(t)\frac{\partial p}{\partial x}|_{q(t)}(x-x(t))+c(t)\frac{\partial p}{\partial y}|_{q(t)}(y-y(t))\\
& + & c(t)\frac{\partial p}{\partial z}|_{q(t)}(z-z(t))+c(t)\frac{\partial p}{\partial u}|_{q(t)}(u-u(t)),
\end{eqnarray*}
from which we deduce that
\begin{equation}\label{proportionality}
y(t)=c(t)\frac{\partial p}{\partial x}|_{q(t)},\,\, x(t)=c(t)\frac{\partial p}{\partial y}|_{q(t)},
\end{equation}
\begin{equation}\label{vanishing}
\frac{\partial p}{\partial z}|_{q(t)}=0\,\,{\rm and }\,\, \frac{\partial p}{\partial u}|_{q(t)}=0. 
\end{equation}
As $t$ goes to $\underline 0$,  $c(0)=0$, since  $x(t)\neq 0\neq y(t)$ if $t\neq 0$ but
$x(0)=y(0)=0$ and $\frac{\partial p}{\partial x}|_{q(t)}\neq 0\neq \frac{\partial p}{\partial y}|_{q(t)}$ for any $t$ in a neighborhood of $\underline 0$. 
We now write down the local equations 
\begin{equation}\label{laurent1}
\mathcal X_t: y(t)(x-x(t))+x(t)(y-y(t))+(x-x(t))(y-y(t))=0
\end{equation}
of $\mathcal X_t$ at $q(t)$, and the local equation 
\begin{eqnarray}\label{laurent2}
\,\,\,\,\,\, \mathcal S_t & : & \frac{\partial p}{\partial x}|_{q(t)}(x-x(t))+\frac{\partial p}{\partial y}|_{q(t)}(y-y(t))\\
&+ & \frac{\partial p}{\partial x\partial y}|_{q(t)}(x-x(t))(y-y(t))+\frac{\partial p}{\partial x\partial z}|_{q(t)}(x-x(t))(z-z(t))\nonumber \\ 
&+ & \frac{\partial p}{\partial x\partial u}|_{q(t)}(x-x(t))(u-u(t))+\frac{\partial p}{\partial y\partial z}|_{q(t)}(y-y(t))(z-z(t))\nonumber \\
&+ & \frac{\partial p}{\partial y\partial u}|_{q(t)}(y-y(t))(u-u(t))+\frac{\partial p}{\partial u\partial z}|_{q(t)}(u-u(t))(z-z(t))\nonumber \\
&+ & \frac{1}{2}\frac{\partial p}{\partial x^2}|_{q(t)}(x-x(t))^2+\frac{1}{2}\frac{\partial p}{\partial y^2}|_{q(t)}(y-y(t))^2
+ \frac{1}{2} \frac{\partial p}{\partial u^2}|_{q(t)}(u-u(t))^2\nonumber \\
&+ & \frac{1}{2}\frac{\partial p}{\partial z^2}|_{q(t)}(z-z(t))^2 + o(3)=0\nonumber 
\end{eqnarray}
of 
$\mathcal S_t$ at $q(t)$, where $o(3)\in (x-x(t), y-y(t), z-z(t), u-u(t))^3$. By \eqref{laurent1}, one may write
\begin{equation}
x-x(t)=-\frac{x(t)(y-y(t))}{y-y(t)+y(t)}.
\end{equation}
Let $d$ be the maximum degree of $x-x(t)$ in \eqref{laurent2}. By substituting in \eqref{laurent2}, by multiplying by $y^d= (y-y(t)+y(t))^d$,
and by using that $T_{q(t)}(\mathcal S_t)=T_{q(t)}(\mathcal X_t)$, i.e., $y(t)\frac{\partial p}{\partial y}|_{q(t)}=x(t)\frac{\partial p}{\partial x}|_{q(t)}$ for any $t\neq 0$,
we find that the local equation of $S_t=\mathcal S_t\cap \mathcal X_t$ is given by 
\begin{eqnarray}\label{laurent3}
S_t & : & \frac{\partial p}{\partial x}|_{q(t)}\Big(-x(t)(y-y(t))\Big)\Big(y-y(t)+y(t)\Big)^{d-1}\nonumber\\
& + & \Big( \frac{\partial p}{\partial x\partial y}|_{q(t)}(y-y(t))\Big)\Big(-x(t)(y-y(t))\Big)\Big(y-y(t)+y(t)\Big)^{d-1}\nonumber\\
& + & \Big(\frac{\partial p}{\partial x\partial z}|_{q(t)}(z-z(t))\Big)\Big(-x(t)(y-y(t))\Big)\Big(y-y(t)+y(t)\Big)^{d-1}\nonumber \\
& + & \Big(  \frac{\partial p}{\partial x\partial u}|_{q(t)}(u-u(t))\Big)\Big(-x(t)(y-y(t))\Big)\Big(y-y(t)+y(t)\Big)^{d-1}\nonumber \\
&+& \frac{1}{2}\frac{\partial p}{\partial x^2}|_{q(t)}\Big(-x(t)(y-y(t))\Big)^2\Big(y-y(t)+y(t)\Big)^{d-2} \nonumber\\
& + & \frac{\partial p}{\partial y}|_{q(t)}\Big(y-y(t)\Big)\Big(y-y(t)+y(t)\Big)^{d}\nonumber\\
& + &  \Big(\frac{\partial p}{\partial y\partial z}|_{q(t)}(z-z(t)) + \frac{\partial p}{\partial y\partial u}|_{q(t)}(u-u(t))\Big)\Big(y-y(t)\Big)\Big(y-y(t)+y(t)\Big)^d \nonumber \\
& + & \Big(\frac{1}{2}\frac{\partial p}{\partial y^2}|_{q(t)}(y-y(t))^2+\frac{\partial p}{\partial u\partial z}|_{q(t)}(u-u(t))(z-z(t))\Big)\Big(y-y(t)+y(t)\Big)^{d}\nonumber \\
& + & \Big(\frac{1}{2} \frac{\partial p}{\partial u^2}|_{q(t)}(u-u(t))^2 + \frac{1}{2}\frac{\partial p}{\partial z^2}|_{q(t)}(z-z(t))^2\Big)\Big(y-y(t)+y(t)\Big)^d + o(3)\nonumber\\
& = & y(t)^{d-2}\Big(\frac{\partial p}{\partial y}|_{q(t)}y(t)-\frac{\partial p}{\partial x\partial y}|_{q(t)}x(t)y(t)
+\frac{1}{2}\frac{\partial p}{\partial x^2}|_{q(t)}x(t)^2+\frac{1}{2}\frac{\partial p}{\partial y^2}|_{q(t)}y(t)^2\Big)\Big(y-y(t)\Big)^2\nonumber\\
& + & y(t)^{d-1}\Big(y(t)\frac{\partial p}{\partial y\partial z}|_{q(t)}-x(t)\frac{\partial p}{\partial x\partial z}|_{q(t)}\Big)\Big(y-y(t)\Big)\Big(z-z(t)\Big)\nonumber\\
& + & y(t)^{d-1}\Big(y(t)\frac{\partial p}{\partial y\partial u}|_{q(t)}-x(t)\frac{\partial p}{\partial x\partial u}|_{q(t)}\Big)\Big(y-y(t)\Big)\Big(u-u(t)\Big)\nonumber\\
& + & y(t)^{d} \Big(\frac{1}{2}\frac{\partial p}{\partial z^2}|_{q(t)}(z-z(t))^2+\frac{1}{2}\frac{\partial p}{\partial u^2}|_{q(t)}(u-u(t))^2\Big)\nonumber\\
& + & y(t)^{d} \Big(\frac{\partial p}{\partial u\partial z}|_{q(t)}(u-u(t))(z-z(t))\Big)+o(3)=0.\nonumber
\end{eqnarray}
Up to the irrelevant factor $y(t)^{d-2}$, the Hessian matrix at $q(t)$ of the above polynomial 
is
\begin{equation}
H_{q(t)}=\left(\begin{array}{ccc}A_{11}(t) & A_{12}(t) & A_{13}(t) \\
A_{12}(t) & \frac{y(t)^2}{2}\frac{\partial p}{\partial z^2}|_{q(t)} & \frac{y(t)^2}{2}\frac{\partial p}{\partial z\partial u}|_{q(t)} \\
A_{13}(t) & \frac{y(t)^2}{2}\frac{\partial p}{\partial z\partial u}|_{q(t)} & \frac{y(t)^2}{2}\frac{\partial p}{\partial u^2}|_{q(t)}\end{array}\right),
\end{equation} 
where
\begin{eqnarray*}
A_{11}(t) & = & \frac{\partial p}{\partial y}|_{q(t)}y(t)-\frac{\partial p}{\partial x\partial y}|_{q(t)}x(t)y(t)
+\frac{1}{2}\frac{\partial p}{\partial x^2}|_{q(t)}x(t)^2+\frac{1}{2}\frac{\partial p}{\partial y^2}|_{q(t)}y(t)^2,\\
A_{12}(t) & = & \frac{y(t)}{2}\Big(y(t)\frac{\partial p}{\partial y\partial z}|_{q(t)}-x(t)\frac{\partial p}{\partial x\partial z}|_{q(t)}\Big),\\
A_{13}(t) & = & \frac{y(t)}{2}\Big(y(t)\frac{\partial p}{\partial y\partial u}|_{q(t)}-x(t)\frac{\partial p}{\partial x\partial u}|_{q(t)}\Big).
\end{eqnarray*}
Now $S_t$ has a node at $q(t)$ if and anly if $\det(H_{q(t)})\neq 0$. If we substitute the equalities \eqref{proportionality}
in $H_{q(t)}$, we see that this matrix has the first column divisible by $c(t)$ and the second and third columns divisible by $c(t)^2$. 
Let $B_{q(t)}$ be the matrix obtained by $H_{q(t)}$ by dividing the first column by $c(t)$ and the second and third columns by $c(t)^2$. 
We have that
\begin{equation}
B_{q(t)}=\left(\begin{array}{ccc}B_{11}(t) & B_{12}(t) & B_{13}(t) \\
B_{21}(t) & \frac{1}{2}(\frac{\partial p}{\partial x}|_{q(t)})^2\frac{\partial p}{\partial z^2}|_{q(t)} & \frac{1}{2}(\frac{\partial p}{\partial x}|_{q(t)})^2\frac{\partial p}{\partial z\partial u}|_{q(t)} \\
B_{31}(t) &\frac{1}{2}(\frac{\partial p}{\partial x}|_{q(t)})^2\frac{\partial p}{\partial z\partial u}|_{q(t)} & \frac{1}{2}(\frac{\partial p}{\partial x}|_{q(t)})^2\frac{\partial p}{\partial u^2}|_{q(t)}\end{array}\right),
\end{equation} 
where
\begin{eqnarray*}
B_{11}(t) & = & \frac{\partial p}{\partial y}|_{q(t)}\frac{\partial p}{\partial x}|_{q(t)}+ c(t)\Big(-\frac{\partial p}{\partial x\partial y}|_{q(t)}\frac{\partial p}{\partial x}|_{q(t)}\frac{\partial p}{\partial y}|_{q(t)}\\
&+&\frac{1}{2}\frac{\partial p}{\partial x^2}|_{q(t)}\Big(\frac{\partial p}{\partial y}|_{q(t)}\Big)^2+\frac{1}{2}\frac{\partial p}{\partial y^2}|_{q(t)}\Big(\frac{\partial p}{\partial x}|_{q(t)}\Big)^2\Big),\\
B_{21}(t) & = & \frac{1}{2}\frac{\partial p}{\partial x}|_{q(t)}c(t)\Big(\frac{\partial p}{\partial x}|_{q(t)}\frac{\partial p}{\partial y\partial z}|_{q(t)}-\frac{\partial p}{\partial y}|_{q(t)}\frac{\partial p}{\partial x\partial z}|_{q(t)}\Big),\\
B_{31}(t) & = & \frac{1}{2}\frac{\partial p}{\partial x}|_{q(t)}c(t)\Big(\frac{\partial p}{\partial x}|_{q(t)}\frac{\partial p}{\partial y\partial u}|_{q(t)}-\frac{\partial p}{\partial y}|_{q(t)}\frac{\partial p}{\partial x\partial u}|_{q(t)}\Big).
\end{eqnarray*}
As $t$ goes to $0$, $c(t)$ goes to $0$ and the matrix $B_{q(t)}$ specializes to the matrix
\begin{equation}
B_{\underline 0}=\left(\begin{array}{ccc}\frac{\partial p}{\partial y}|_{\underline 0}\frac{\partial p}{\partial x}|_{\underline 0} & B_{12}(0) & B_{13}(0) \\
0 & \frac{1}{2}(\frac{\partial p}{\partial x}|_{\underline 0})^2\frac{\partial p}{\partial z^2}|_{\underline 0} & \frac{1}{2}(\frac{\partial p}{\partial x}|_{\underline 0})^2\frac{\partial p}{\partial z\partial u}|_{\underline 0} \\
0&\frac{1}{2}(\frac{\partial p}{\partial x}|_{\underline 0})^2\frac{\partial p}{\partial z\partial u}|_{\underline 0} & \frac{1}{2}(\frac{\partial p}{\partial x}|_{\underline 0})^2\frac{\partial p}{\partial u^2}|_{\underline 0}\end{array}\right).
\end{equation}
Using that $\frac{\partial p}{\partial x}|_{\underline 0}=1=\frac{\partial p}{\partial y}|_{\underline 0}$, we see that $\det(B_{\underline 0})$ coincides with
the discriminant of the degree $2$ homogeneous polynomial $p_2(0,0,z,u)$, which is non zero by the hypothesis that $S_0$ has a $T_1$ singularity at $\underline 0$.
We finally deduce that $\det(B_{q(t)})\neq 0\neq \det(H_{q(t)})$ and thus the surface $S_t$ has a node at $q(t)$ for $t\neq 0$. 
\end{proof}

\subsubsection{Equisingular deformations of surfaces with $T_1$ singularities}\label {ssec:defo}
 We go on considering the setting we introduced at the beginning of Section \ref{deformations}. 
Assume that $S_0=S_A\cup S_B\subset\mathcal X_0$ is a surface with $S_A$ and $S_B$  smooth, intersecting transversally along $R=A\cap B$, except for
$\delta$ distinct points $p_1,\dots p_\delta\in S_A\cap S_B$, where $S_0$ has a singularity of type $T_1$.
We recall the standard exact sequence 
\begin{equation}\label{eq:standard-sequence}
\xymatrix{
 0  \ar[r] & \Theta_{S_0}  \ar[r] &      \Theta_{\mathcal X}|_{S_0} \ar[r]^{\alpha} & \mathcal N_{S_0|\mathcal X} \ar[r]^{\beta} &   T^1_{S_0}  \ar[r] &     0, }
 \end{equation}
 where $\Theta_{S_0} = \mathfrak{hom}(\Omega_{S_0}^1,\mathcal O_{S_0})$ is the tangent sheaf of $S_0,$
  $\Theta_{\mathcal X}|_{S_0}$ is the tangent sheaf of $\mathcal X$ restricted to $S_0$, $\mathcal N_{S_0|\mathcal X}$ is the normal
   bundle of $S_0$ in $\mathcal X$, and  $T^1_{S_0}$ is the first cotangent sheaf of $S_0$ \cite[Section 1.1.3]{ser}. The latter is supported on the singular 
  locus $\Sing(S_0)=S_A\cap S_B$. The kernel $\mathcal N_{S_0|\mathcal X}^\prime$ of $\beta$ is the so-called 
 {\it equisingular normal sheaf to $S_0$ in $\mathcal X$},
whose global sections are the first order locally trivial deformations of $S_0$ in $\mathcal X$. 
 
In the sequel an equisingular (first order) deformation
of $S_0$ in $\mathcal X$ will be a (first order) locally trivial deformation of $S_0$ in $\mathcal X$. 

We will denoted by
$\mathcal {ES}_{[S_0]}^{\mathcal X_0}\subseteq \mathcal H^{\mathcal X_0}\subset\mathcal H^{\mathcal X|\mathbb D}$ the locally closed  set of equisingular deformations of $S_0$ in $\mathcal X_0$.  Similarly, if $p\in S_0$ is a point, we will denote by
$\mathcal {ES}_{[S_0], p}^{\mathcal X_0}\subseteq \mathcal H^{\mathcal X_0}\subset\mathcal H^{\mathcal X|\mathbb D}$ the locally closed set of deformations of $S_0$
in $\mathcal X_0$, which are equisingular at $p$. 

\begin{lemma}
If $\mathcal {ES}_{[S_0]}^{\mathcal X|\mathbb D}\subseteq \mathcal H^{\mathcal X|\mathbb D}$  is the locally closed  set of equisingular deformations of 
$S_0$ in $\mathcal X$, then $\mathcal {ES}_{[S_0]}^{\mathcal X|\mathbb D}$ coincides set-theoretically with $\mathcal {ES}_{[S_0]}^{\mathcal X_0}\subseteq \mathcal H^{\mathcal X_0}$.
\end{lemma}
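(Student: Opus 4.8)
The plan is to reduce the statement to the claim that no equisingular deformation of $S_0$ can leave the central fibre $\mathcal X_0$. This suffices: the inclusion $\mathcal {ES}_{[S_0]}^{\mathcal X_0}\subseteq \mathcal {ES}_{[S_0]}^{\mathcal X|\mathbb D}$ is tautological, and so is the identity $\mathcal {ES}_{[S_0]}^{\mathcal X|\mathbb D}\cap \mathcal H^{\mathcal X_0}=\mathcal {ES}_{[S_0]}^{\mathcal X_0}$, since being equisingular to $S_0$ is a property of a surface that is independent of whether the surface is regarded inside $\mathcal X_0$ or inside $\mathcal X$. So it is enough to prove $\mathcal {ES}_{[S_0]}^{\mathcal X|\mathbb D}\subseteq \mathcal H^{\mathcal X_0}$.

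I would argue by contradiction. If $\mathcal {ES}_{[S_0]}^{\mathcal X|\mathbb D}\not\subseteq\mathcal H^{\mathcal X_0}$, then, since every point of $\mathcal {ES}_{[S_0]}^{\mathcal X|\mathbb D}$ can be joined to $[S_0]$ within $\mathcal {ES}_{[S_0]}^{\mathcal X|\mathbb D}$, there is an irreducible curve $\gamma\subseteq \mathcal {ES}_{[S_0]}^{\mathcal X|\mathbb D}$ through $[S_0]$ not contained in $\mathcal H^{\mathcal X_0}$; then $\gamma$ dominates $\mathbb D$ and, lying in some irreducible component of $\mathcal H^{\mathcal X|\mathbb D}$ through $[S_0]$, it is (after reparametrization) an irreducible $r$-multisection of $\mathcal H^{\mathcal X|\mathbb D}_{[S_0]}$ through $[S_0]$. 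By Definition \ref{def: deformation} its total space is a deformation $S\subset\mathcal X$ of $S_0$ not in $\mathcal X_0$, with $S\cap\mathcal X_0=rS_0$, whose general fibre $S_t=S^1_t\cup\dots\cup S^r_t$ has $r$ \emph{irreducible} components $S^j_t$; and each $S^j_t$, being parametrized by a point of $\gamma$, is equisingular to $S_0$.

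Now recall that $S_0=S_A\cup S_B$ is singular exactly along the curve $C=S_A\cap S_B$, with a transverse double point at the general point of $C$ and a $T_1$ singularity only at the $\delta$ points $p_1,\dots,p_\delta$. Fix $p\in C$ with $p\notin\{p_1,\dots,p_\delta\}$; then $S_A$ and $S_B$ are smooth at $p$ and $T_p(S_A)\neq T_p(S_B)$. Since $S^j_t$ is equisingular to $S_0$ and specializes to $S_0$, its singular locus specializes onto $\mathrm{Sing}(S_0)=C$; hence, for $t$ small, $S^j_t$ is singular at points arbitrarily close to $p$. On the other hand, by Proposition \ref{lm:transverse} applied to the deformation $S$ and to the transverse point $p$, in a sufficiently small analytic neighbourhood of $p$ in $\mathcal X$ all irreducible components of the general fibre of $S$ — in particular $S^j_t$ — are smooth (near the remaining points $p_i$ the analogous statement is Lemma \ref{lm:solo-nodo}). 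This contradiction proves $\mathcal {ES}_{[S_0]}^{\mathcal X|\mathbb D}\subseteq\mathcal H^{\mathcal X_0}$, and hence the lemma.

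The step I expect to require the most care is the reduction in the second paragraph: one must be sure that $\gamma$ can be chosen inside a single irreducible component of $\mathcal H^{\mathcal X|\mathbb D}$ passing through $[S_0]$, so that Definition \ref{def: deformation} legitimately applies and the sheets $S^j_t$ are genuinely irreducible surfaces, and one must use that ``equisingular to $S_0$'' is meant strongly enough that the singular locus of $S^j_t$ really does specialize onto $C$ (so that $S^j_t$ becomes singular near the transverse point $p$). Once this is granted, the clash with Proposition \ref{lm:transverse} is immediate, so the real content of the lemma sits in Proposition \ref{lm:transverse} (together with Lemma \ref{lm:solo-nodo}).
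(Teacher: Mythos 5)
Your proof is correct and is essentially the paper's argument: the paper disposes of this lemma in one line as ``a straightforward consequence of Lemma \ref{lm:solo-nodo}'', the point being exactly yours, namely that a deformation leaving the central fibre necessarily alters the singularities along $C$ and so cannot be equisingular. The only (harmless) difference is that you derive the contradiction at a general transverse point of $C$ via Proposition \ref{lm:transverse}, whereas the paper points to the $T_1$ points via Lemma \ref{lm:solo-nodo}; both suffice, and your explicit reduction to a family dominating $\mathbb D$ is more detail than the paper bothers to write.
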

\begin{proof}
This is a straightforward consequence of Lemma \ref{lm:solo-nodo}. 
\end{proof}

If $\mathbb T_\delta\subset \mathcal H^{\mathcal X_0}$ is the Zariski closure of the family of surfaces in $\mathcal X_0$ with $\delta$ singularities of type $T_1$, then every irreducible component of $\mathcal {ES}_{[S_0]}^{\mathcal X_0}$ is a Zariski open set in an irreducible component of $\mathbb T_\delta$.

Consider the rational map
$$
\varphi: \mathcal H^{\mathcal X_0}\dasharrow \mathcal H^R
$$
where $\mathcal H^R$ is the Hilbert scheme of $R$, and $\varphi$ maps the general subscheme of $\mathcal X_0$ to its intersection  with $R$. 

\begin{lemma}\label{lm:upper-bound} Let $[S_0]\in \mathbb T_\delta$ be a point corresponding to a surface $S_0=S_A\cup S_B$ as above and suppose that $[S_0]$ is a smooth point of the Hilbert scheme $\mathcal H^{\mathcal X_0}$ so that there is a unique component $\mathcal H^{\mathcal X_0}_{S_0}$ of $\mathcal H^{\mathcal X_0}$
containing $S_0$. Let 
 $C$ be the curve cut out by $S_0$ on $R$. Assume that $h^1(\mathcal N_{C|R}\otimes \mathcal I_{\{p_1,\ldots, p_\delta\}|R})=0$, where $p_1,\ldots, p_\delta$ are the nodes of $C$, which implies that  $\mathcal H^R$ is smooth at the point $[C]$ and that the Severi variety of curves on $R$ with $\delta$ nodes is smooth at the point $[C]$ of codimension $\delta$ in the unique irreducible component $\mathcal H^R_C$ of the Hilbert scheme $\mathcal H^R$ containing $[C]$.  Suppose moreover that the map 
\begin{equation}\label{eq:restr}
 \varphi_{|\mathcal H^{\mathcal X_0}_{S_0}}: \mathcal H^{\mathcal X_0}_{S_0}\dasharrow \mathcal H_C^R
 \end{equation} is dominant. 
  
Then there is an irreducible component $\mathbb T$ of $\mathbb T_\delta$ containing $S_0$ that has codimension at most $\delta$ in  $\mathcal H_{S_0}^{\mathcal X_0}$.
\end{lemma}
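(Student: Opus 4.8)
The plan is to read off the codimension of $\mathbb T_\delta$ at $[S_0]$ from the codimension of the Severi variety of $\delta$--nodal curves on $R$, transported to $\mathcal H^{\mathcal X_0}_{S_0}$ through the restriction map $\varphi$. First observe that $\varphi$ is an honest morphism on a neighbourhood $\mathcal U$ of $[S_0]$ in $\mathcal H^{\mathcal X_0}_{S_0}$: the surface $S_0=S_A\cup S_B$ does not contain $R$ (since $S_A\cap R=C$ is a curve) and neither does any surface close to it, and $\varphi([S_0])=[C]$. Write $V\subseteq\mathcal H^R_C$ for the Severi variety of $\delta$--nodal curves on $R$; by the hypothesis $h^1(\mathcal N_{C|R}\otimes\mathcal I_{\{p_1,\dots,p_\delta\}|R})=0$, the component $\mathcal H^R_C$ is smooth at $[C]$ and $V$ is smooth at $[C]$ of codimension $\delta$ in $\mathcal H^R_C$, so in particular $V$ is, locally at $[C]$, the zero locus of $\delta$ elements forming part of a regular system of parameters of $\mathcal O_{\mathcal H^R_C,[C]}$.

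Next I would check that, after shrinking $\mathcal U$, one has $\mathcal U\cap\varphi^{-1}(V)\subseteq\mathbb T_\delta$. A point of $\mathcal U$ corresponds to a Cartier divisor $S_0'=S_A'\cup S_B'$ on $\mathcal X_0$, with $S_A'\subset A$ and $S_B'\subset B$ close to $S_A$ and $S_B$, cutting the same curve $C'=\varphi(S_0')$ on $R$. If $[C']\in V$ then $C'$ has $\delta$ nodes; since $S_A$ and $S_B$ are smooth at $p_1,\dots,p_\delta$, for $S_0'$ close enough to $S_0$ the surfaces $S_A'$ and $S_B'$ are smooth at the $\delta$ nodes of $C'$, so $S_0'$ has $\delta$ singularities of type $T_1$ and $[S_0']\in\mathbb T_\delta$. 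Since $\varphi([S_0])=[C]\in V$, the point $[S_0]$ lies on $\varphi^{-1}(V)$; let $\mathbb T$ be the irreducible component of $\varphi^{-1}(V)$ through $[S_0]$. Then $\mathbb T\cap\mathcal U$ is dense in $\mathbb T$ and contained in the closed set $\mathbb T_\delta$, hence $\mathbb T\subseteq\mathbb T_\delta$, and $\mathbb T$ is contained in some irreducible component $\mathbb T'$ of $\mathbb T_\delta$ passing through $[S_0]$.

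It then remains to bound the codimension of $\mathbb T$. Pulling back via the morphism $\varphi$ the $\delta$ local equations of $V$ at $[C]$ exhibits $\varphi^{-1}(V)$, locally at $[S_0]$, as the common zero locus of $\delta$ functions on the variety $\mathcal H^{\mathcal X_0}_{S_0}$; by Krull's Hauptidealsatz every irreducible component of $\varphi^{-1}(V)$ through $[S_0]$, in particular $\mathbb T$, has codimension at most $\delta$ in $\mathcal H^{\mathcal X_0}_{S_0}$. (Dominance of $\varphi_{|\mathcal H^{\mathcal X_0}_{S_0}}$ ensures this estimate is meaningful; in fact one expects equality, but only the inequality is needed here.) Combining with the previous step, $\operatorname{codim}(\mathbb T',\mathcal H^{\mathcal X_0}_{S_0})\le\operatorname{codim}(\mathbb T,\mathcal H^{\mathcal X_0}_{S_0})\le\delta$, which is the assertion.

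I expect the delicate point to be the second step, where one must use the \emph{openness} of the condition ``$S_A'$ and $S_B'$ are smooth at the nodes of $C'$'' to be sure that a small deformation of $S_0$ whose trace on $R$ is $\delta$--nodal carries genuinely $\delta$ singularities of type $T_1$ (and not worse ones); and, in the third step, the use of the smoothness hypothesis to know that $V$ is \emph{locally a complete intersection} at $[C]$, which is exactly what makes $\varphi^{-1}(V)$ cut out by $\delta$ equations so that the Hauptidealsatz applies.
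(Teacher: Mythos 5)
Your argument is correct, but it is not the route the paper takes. The paper exploits the dominance hypothesis: it selects the irreducible component $\mathbb T$ of $\mathbb T_\delta$ through $[S_0]$ on which $\varphi$ restricts to a dominant map onto $V$, and then compares general fibre dimensions, writing $\dim(\mathbb T)=\dim(V)+a$ and $\dim(\mathcal H^{\mathcal X_0}_{S_0})=\dim(\mathcal H^R_C)+b$ with $a\geq b$, so that $\operatorname{codim}(\mathbb T)=\delta+b-a\leq\delta$. You instead use the smoothness of $V$ at $[C]$ in codimension $\delta$ to realize $V$ near $[C]$ as the zero locus of $\delta$ elements of a regular system of parameters, pull these back along $\varphi$, and invoke Krull's Hauptidealsatz; dominance is never used (as you note), and the conclusion that the resulting components lie in $\mathbb T_\delta$ is secured by your second step, since $V$ coincides with its closure in a neighbourhood of $[C]$ (being locally closed, hence open in its closure), so every component of the zero locus of the $\delta$ pulled-back equations through $[S_0]$ has a dense open subset mapping into $V$. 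The trade-off is roughly this: the paper's fibre-dimension bookkeeping is what feeds Remark \ref{rem:eq} (codimension exactly $\delta$ if and only if $a=b$) and is reused almost verbatim in the proof of Corollary \ref{cor:ind}, whereas your argument is more local, needs one hypothesis fewer, and makes explicit a step the paper leaves implicit, namely that a small deformation of $S_0$ whose trace on $R$ is $\delta$-nodal acquires exactly $\delta$ singularities of type $T_1$ (which in fact follows even more directly from the openness of smoothness of $S_A'$ and $S_B'$ on all of $A$ and $B$, not just at the nodes of $C'$). Both proofs rest on the same two inputs --- the regularity of the Severi variety of $R$ at $[C]$ and the behaviour of $\varphi$ near $[S_0]$ --- so yours is a legitimate, and in places tighter, alternative.
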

\begin{proof} Let $V\subset  \mathcal H_C^R$ be the unique irreducible component of the locally closed set of curves on $R$ with $\delta$ nodes that contains the point $[C]$. Since
$[C]$ sits in the image of  $ \varphi_{|\mathcal H^{\mathcal X_0}_{S_0}}$, $V$ intersects  the image of  $\varphi_{|\mathcal H^{\mathcal X_0}_{S_0}}$.
Since $\varphi_{|\mathcal H^{\mathcal X_0}_{S_0}}$ is dominant, the intersection of $V$ with 
the image of  $ \varphi_{|\mathcal H^{\mathcal X_0}_{S_0}}$ is an open dense subset of $V$, hence
there is an irreducible component  $\mathbb T$ of $\mathbb T_\delta$ containing  $S_0$ such that the map
$$
\varphi_{|\mathbb T}: \mathbb T\dasharrow V
$$ 
is dominant. Let $a$ be the dimension of the general fibre of $\varphi_{|\mathbb T}$ and let $b$ be the dimension of the general fibre of $\varphi_{|\mathcal H^{\mathcal X_0}_{S_0}}$. Of course $a\geq b$.
We have 
$$
\dim(\mathbb T)=\dim (V)+a, \,\, \text{and}\,\, \dim(\mathcal H^{\mathcal X_0}_{S_0})=\dim(\mathcal H_C^R)+b
$$ 
hence
$$
\dim(\mathcal H^{\mathcal X_0}_{S_0})-\dim(\mathbb T)=\dim(\mathcal H_C^R)-\dim (V)+b-a\leq \delta
$$
and the assertion follows. \end{proof}

\begin{remark}\label{rem:eq} Note that in the previous lemma, one has that $\mathbb T$ has exactly codimension $\delta$ in $\mathcal H_{S_0}^{\mathcal X_0}$ if and only if $a=b$.
\end{remark}

\begin{lemma}\label{lm:tangent-space-tacnode} Assume that $S_0=S_A\cup S_B\subset\mathcal X_0$ is a surface with $S_A$ and $S_B$  smooth, intersecting transversally along $R=A\cap B$, except for
$\delta$ points $p_1,\dots, p_\delta\in S_A\cap S_B$, where $S_0$ has a singularity of type $T_1$. 
Then the equisingular first order infinitesimal deformations of $S_0$ in $\mathcal X$ coincide with the equisingular first order  infinitesimal deformations of $S_0$ in $\mathcal X_0$. 
More precisely, we have that 
\begin{equation}\label{eq:equisingular1}
H^0(S_0,\mathcal N_{{S_0}|\mathcal X}^\prime) \subseteq  
H^0(S_0,\mathcal N_{ {S_0}|\mathcal X_0}\otimes I_{\{p_1,\dots,p_\delta\}|\mathcal X_0}),
\end{equation}
where $I_{\{p_1,\dots,p_\delta\}|\mathcal X_0}$ is the ideal sheaf of $\{p_1,\dots,p_\delta\}$ in $\mathcal X_0$. 
\end{lemma}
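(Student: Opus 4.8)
The plan is to localise the statement near the intersection curve $C=S_A\cap S_B$, to analyse separately a point $q\in C$ where $S_A$ and $S_B$ meet transversally and a $T_1$ point $p_i$, and to patch the two local descriptions together via a global connectedness argument. First I would set up the comparison between $\mathcal N_{S_0|\mathcal X}$ and $\mathcal N_{S_0|\mathcal X_0}$. Since $\mathcal X_0$ is the scheme-theoretic fibre over $0$ of $\mathcal X\to\mathbb D$, it is a Cartier divisor in $\mathcal X$ with $\mathcal N_{\mathcal X_0|\mathcal X}\cong\mathcal O_{\mathcal X_0}$, canonically trivialised by the function $t$ pulled back from $\mathbb D$; and $S_0$ is a Cartier divisor on $\mathcal X_0$ containing no component of $\mathcal X_0$ (as $S_A\subsetneq A$ and $S_B\subsetneq B$), hence locally cut out in $\mathcal X$ by a regular sequence formed by the equation of $\mathcal X_0$ and a local equation $g$ of $S_0$ in $\mathcal X_0$. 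Thus $S_0\subset\mathcal X_0\subset\mathcal X$ are regular embeddings, and dualising the corresponding conormal sequence (of locally free sheaves) gives a short exact sequence $0\to\mathcal N_{S_0|\mathcal X_0}\to\mathcal N_{S_0|\mathcal X}\xrightarrow{\ \psi\ }\mathcal O_{S_0}\to0$, in which $\psi$ is pairing with the class of the equation of $\mathcal X_0$. I also record that, by exactness of \eqref{eq:standard-sequence}, the equisingular normal sheaf equals $\mathcal N'_{S_0|\mathcal X}=\operatorname{Im}(\alpha)$.

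The first step is to show that an equisingular deformation of $S_0$ in $\mathcal X$ cannot move $S_0$ off $\mathcal X_0$, i.e. $H^0(S_0,\mathcal N'_{S_0|\mathcal X})\subseteq H^0(S_0,\mathcal N_{S_0|\mathcal X_0})$. Given $v\in H^0(S_0,\mathcal N'_{S_0|\mathcal X})$, the section $\psi(v)\in H^0(S_0,\mathcal O_{S_0})$ restricts to a constant on each of the connected surfaces $S_A$ and $S_B$. At a point $q\in C\smallsetminus\{p_1,\dots,p_\delta\}$, in analytic coordinates as in the proof of Proposition \ref{lm:transverse} normalised so that $S_0\colon xy=z=0$, a direct computation of $\operatorname{Im}(\alpha)$ gives $\mathcal N'_{S_0|\mathcal X}=\mathcal I_{C|S_0}\,\partial_{xy}\oplus\mathcal O_{S_0}\,\partial_z$ near $q$ (with $\partial_{xy},\partial_z$ the basis of $\mathcal N_{S_0|\mathcal X}$ dual to the generators $xy,z$ of the conormal bundle), whence $\psi(\mathcal N'_{S_0|\mathcal X})=\mathcal I_{C|S_0}$ near $q$. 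Therefore $\psi(v)$ vanishes along $C$ in a neighbourhood of $q$, so the two constants vanish, $\psi(v)=0$, and $v\in H^0(S_0,\mathcal N_{S_0|\mathcal X_0})$.

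The second step is to show that such a $v$ vanishes at each $p_i$, which — together with the first step — yields \eqref{eq:equisingular1}. By the first step, $v$ is a global section of the subsheaf $\mathcal N'_{S_0|\mathcal X}\cap\mathcal N_{S_0|\mathcal X_0}$ of $\mathcal N_{S_0|\mathcal X}$, so it suffices to identify this sheaf near $p=p_i$. In the local model of \S\ref{ssec:node} — $\mathcal X$ smooth with coordinates $x,y,z,u$, $\mathcal X_0\colon xy=0$, $S_0\colon xy=g=0$ with $g=x+y+f_2$ and $f_2(0,0,z,u)$ a nondegenerate quadratic form as in \eqref{eq:tacnode} — one has $\mathcal N_{S_0|\mathcal X}=\mathcal O_{S_0}\partial_{xy}\oplus\mathcal O_{S_0}\partial_g$, $\mathcal N_{S_0|\mathcal X_0}=\mathcal O_{S_0}\partial_g$, and $\mathcal N'_{S_0|\mathcal X}=\operatorname{Im}(\alpha)$ is generated by $y\partial_{xy}+g_x\partial_g$, $x\partial_{xy}+g_y\partial_g$, $g_z\partial_g$, $g_u\partial_g$, where $g_x=\partial_x g$ and $g_y=\partial_y g$ are units while $g_z=\partial_z f_2$ and $g_u=\partial_u f_2$ lie in the maximal ideal $\mathfrak m_p$. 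Writing an element $b\,\partial_g$ of $\mathcal N'_{S_0|\mathcal X}\cap\mathcal N_{S_0|\mathcal X_0}$ as an $\mathcal O_{S_0}$-combination $\lambda_1(y\partial_{xy}+g_x\partial_g)+\lambda_2(x\partial_{xy}+g_y\partial_g)+\lambda_3 g_z\partial_g+\lambda_4 g_u\partial_g$ of these generators, the vanishing of the $\partial_{xy}$-component gives the relation $\lambda_1 y+\lambda_2 x=0$ in $\mathcal O_{S_0}$. The crucial point — and the main obstacle — is that this relation forces $\lambda_1\in(x)$ and $\lambda_2\in(y)$: since $S_A$ is smooth, $\mathcal O_{S_A}=\mathcal O_{S_0}/(x)$ is a regular, hence integral, local ring in which $y$ is a non-zerodivisor, so $\lambda_1 y=0$ in $\mathcal O_{S_A}$ forces $\lambda_1\in(x)$, and symmetrically for $\lambda_2$ using smoothness of $S_B$. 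Then $b=\lambda_1 g_x+\lambda_2 g_y+\lambda_3 g_z+\lambda_4 g_u\in(x,y,g_z,g_u)\subseteq\mathfrak m_p$, so $\mathcal N'_{S_0|\mathcal X}\cap\mathcal N_{S_0|\mathcal X_0}\subseteq\mathfrak m_p\,\mathcal N_{S_0|\mathcal X_0}$ near each $p_i$; hence $v$ vanishes at $p_1,\dots,p_\delta$. The transversal computation of the first step and the remaining sheaf-theoretic bookkeeping are routine.
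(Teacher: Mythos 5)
Your proof is correct and follows essentially the same strategy as the paper's: an explicit local computation of $\operatorname{Im}(\alpha)$ at the points of $C$, combined with the global fact that a section of $\mathcal O_{S_0}$ vanishing along $C$ is zero, which kills the component of the section along $\mathcal N_{\mathcal X_0|\mathcal X}|_{S_0}$ and then forces the remaining component to lie in $\mathfrak m_{p_i}\,\mathcal N_{S_0|\mathcal X_0}$ at each $T_1$ point. The only notable variation is that where the paper deduces $v_x(\underline 0)=v_y(\underline 0)=0$ by Taylor-expanding the identity $yv_x+xv_y=0$, you deduce $\lambda_1\in(x)$ and $\lambda_2\in(y)$ from $\lambda_1 y+\lambda_2 x=0$ using that $\mathcal O_{S_A,p}$ and $\mathcal O_{S_B,p}$ are integral domains in which $y$, resp.\ $x$, is a non-zerodivisor --- a slightly more careful justification of the same step.
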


\begin{proof}
Let $p=p_i$, for  $i=1,...,\delta$, be a point where $S_0$ has a $T_1$ singularity. Consider the localized exact sequence
 \begin{equation}\label{localizationsequenceone}
 \xymatrix{
 0\ar[r] & \mathcal N^\prime_{{S_0}|\mathcal X,\,p}\ar[r] &
\mathcal N_{{S_0}|\mathcal X,\,p}\ar[r] &  T^1_{{S_0},p}\ar[r] & 0.}
 \end{equation} 
 Let $(x,y,z,u,t)$ be an analytic coordinate system of $\mathcal X$ centered at $p$  such that $\mathcal X$ is given by $xy=t$ and
 such that we have the following identifications:
 \begin{itemize}
 \item the local ring $\mathcal O_{{S_0},\,p}=\mathcal O_{\mathcal X,p}/\mathcal I_{{S_0}|\mathcal X,p}$ of $S_0$ at $p$ 
 is identified with \\ $\mathbb C[x,y,z,u]/(h_1,h_2)$, localized at the origin, where $h_1(x,y,z,u)= x+y+h_{12}(x,y,z,u)$,  $h_{12}(x,y,z,u)\in (x,y,z,u)^2$ and $h_{12}(0,0,z,u)=0$ having a node at $\underline 0=p$, and $h_2(x,y,z,u)=xy$; \\
 \item the $\mathcal O_{{S_0},p}$-module $\mathcal N_{{S_0}|\mathcal X,\,p}$ is identified with the free $\mathcal O_{\mathcal X,\,p}$-module \\ 
 $\mathfrak{hom}_{\mathcal O_{\mathcal X,\,p}}(\mathcal I_{{S_0}|\mathcal X ,\,p}, \mathcal O_{S_0,p}),$ generated by the
 morphisms $h_1^*$ and $h_2^*$,  defined by 
 $$h_i^*(s_1(x,y,z,u)h_1(x,y,z,u)+s_2(x,y,z,u)h_2(x,y,z,u))=s_i(x,y,z,u),\,\mbox{for}\, i=1,\,2$$
 \end{itemize}
 and, finally,
 \begin{itemize}
 \item the $\mathcal O_{S_0,p}$-module
 \begin{eqnarray*}
 (\Theta_{\mathcal X}|_{S_0})_{\, p}&\simeq &\Theta_{\mathcal X,p}\otimes \mathcal O_{S_0,p}\\
& \simeq & \langle \partial /{\partial x},\partial /{\partial y},\partial / {\partial z},\partial / {\partial u},\partial /{\partial t} \rangle
 _{\mathcal O_{S_0,\,p}}/
 \langle {\partial}/{\partial t}-x\partial /{\partial y}-y\partial /{\partial x}\rangle
 \end{eqnarray*}
 is identified with  the free $\mathcal O_{\mathcal X,\,p}$-module generated by the derivatives \\
 $\partial /{\partial x},\partial / {\partial y},\partial /{\partial z}, \partial / {\partial u}.$ 
 \end{itemize}
With these identifications, 
the localization $\alpha_p:(\Theta_{\mathcal X}|_{S_0})_{\, p}\rightarrow \mathcal N_{S_0|\mathcal X,\,p}$ of the sheaf map $\alpha$ from \eqref{eq:standard-sequence} is defined by
 \begin{eqnarray*}
\alpha_p(\partial /{\partial x})&=&\,\,\,\,\,\,\Big(s=s_1h_1+s_2h_2\rightarrowtail 
\partial s/\partial x=_{\mathcal O_{S_0,p}}
s_1\partial h_1 /{\partial x}
+s_2\partial h_2 /{\partial x}\Big)\\
&=&\,\,\,\,\,\, (1+\partial h_{12}/\partial x ) h_1^*+yh_2^*,\\
\alpha_p(\partial /{\partial y})&=&\,\,\,\,\,\, (1+\partial h_{12}/\partial y ) h^*_1+xh^*_2,\\
\alpha_p(\partial /{\partial z})&=&\,\,\,\,\,\, (\partial h_{12}/\partial z) h^*_1 \,\,\,\textrm{and}\\
\alpha_p(\partial /{\partial u})&=&\,\,\,\,\,\,(\partial h_{12}/\partial u) h^*_1.
\end{eqnarray*}
By definition of $\mathcal N^\prime_{S_0|\mathcal X}$, a local section $s$ of  $\mathcal N^\prime_{S_0|\mathcal X,\,p}$, is such that there exists a local section $v$ of ${\Theta_{\mathcal X}|_{S_0}}_{\, p}$,
with
$$v=v_x(x,y,z,u)\partial /{\partial x}+v_y(x,y,z,u)\partial /{\partial y}+v_z(x,y,z,u)\partial /{\partial z}+v_u(x,y,z,u)\partial /{\partial u},$$
such that $s=\alpha_p(v)$. Hence, locally at $p$, first order equisingular deformations of $S_0$ in $\mathcal X$ have equations 
\begin{eqnarray}\label{equisingulardeformationtacnode}
\tiny
\left\{\begin{array}{lll}
x+y+h_{12}(x,y,z,u)& + & \epsilon \Big(v_x(1+\partial h_{12}/\partial x)+v_y(1+\partial h_{12}/\partial y)\\
& + &\,\,\,\,\, v_z(\partial h_{12}/\partial z)+v_u(\partial h_{12}/\partial u)\Big)=0\\
xy+\epsilon (yv_x+xv_y) & = & 0.
\end{array}\right.
\end{eqnarray}
The first equation above gives a first order infinitesimal deformation of the Cartier divisor 
 cutting $S_0$ on $\mathcal X_0$, while the second equation 
 gives a first order infinitesimal 
 deformation of $\mathcal X_0$ in $\mathcal X.$ More precisely, by the exact sequence
 
\begin{equation*}\label{eq:standard-sequence-central}
\xymatrix{ 0  \ar[r] &      \Theta_{\mathcal X_0}  \ar[r] &      \Theta_{\mathcal X}|_{\mathcal X_0} \ar[r] & 
\mathcal N_{\mathcal X_0|\mathcal X} \cong\mathcal O_{\mathcal X_0}\ar[r] &   T^1_{\mathcal X_0}\cong\mathcal O_R  \ar[r] &     0, }
 \end{equation*} 
 one sees that $xy+\epsilon (yv_x+xv_y)=0$ is the local equation at $p$ of a first order  equisingular deformation 
of  $\mathcal X_0$ in $\mathcal X$. 
But $H^0(\mathcal X_0, \mathcal N'_{\mathcal X_0|\mathcal X})=H^0(\mathcal X_0, \mathcal I_{R|\mathcal X_0})=0.$ It follows that
the polynomial  $yv_x(x,y,z,u)+xv_y(x,y,z,u)$ in the second equation of  
\eqref{equisingulardeformationtacnode}  must be identically zero, proving the first assertion of the Lemma. 
In particular,  by expanding $v_x$ and $v_y$ in Taylor series, we see that
$$v_x(\underline 0)=v_y(\underline 0)=0.$$

Looking at the first equation of  \eqref{equisingulardeformationtacnode}, we  have that
$\frac{\partial h_{12}}{\partial z}(\underline 0)=\frac{\partial h_{12}}{\partial u}(\underline 0)=0$ since $h_{12}(x,y,z,u)\in (x,y,z,u)^2$. 
This shows the inclusion \eqref{eq:equisingular1}. 
\end{proof}

\begin{remark}\label{rem:gen} The argument in the proof of Lemma \ref{lm:tangent-space-tacnode} proves more than stated. In fact it proves that if $S_0$ is any surface in $\mathcal X_0$ with $\delta$ singularities of type $T_1$ at $p_1,\ldots, p_\delta$ (and may be other singularities which we do not care about), the first order infinitesimal deformations of $S_0$ in $\mathcal X$ which are equisingular at $p_1,\ldots, p_\delta$ are a linear subspace of $H^0(S_0,\mathcal N_{ S_0|\mathcal X_0}\otimes I_{\{p_1,\dots,p_\delta\}|\mathcal X_0})$. 
\end{remark}

\begin{corollary}\label{cor:tangent-space-tacnode} Same hypotheses as in Lemma \ref {lm:upper-bound}.
Assume moreover that
\begin{equation}\label{uno}
H^1(S_0, \mathcal N_{S_0|\mathcal X_0}\otimes  I_{\{p_1,\dots,p_\delta\}|\mathcal X_0})=0
\end{equation}
or, equivalently, that
\begin{equation}\label{due}
H^1(S_0, \mathcal N_{S_0|\mathcal X_0})=0\,\,{\rm{and}}\,\,\
h^0(S_0, \mathcal N_{S_0|\mathcal X_0}\otimes  I_{\{p_1,\dots,p_\delta\}|\mathcal X_0})=h^0(S_0, \mathcal N_{S_0|\mathcal X_0})-\delta
\end{equation} 
(assuring that  the Hilbert scheme $\mathcal H^{\mathcal X_0}$ is smooth at $[S_0]$). Then
the  schemes $\mathcal {ES}_{[S_0]}^{\mathcal X_0}$ and
 $\mathbb T_\delta$  are smooth at $[S_0]$ of dimension $h^0(S_0, \mathcal N_{ S_0|\mathcal X_0}\otimes I_{\{p_1,\dots,p_\delta\}|\mathcal X_0}) )$,
 with tangent space $T_{[S_0]}(\mathcal {ES}_{[S_0]}^{\mathcal X_0})\simeq T_{[S_0]}(\mathbb T_\delta)\simeq H^0(S_0, \mathcal N_{ S_0|\mathcal X_0}\otimes I_{\{p_1,\dots,p_\delta\}|\mathcal X_0}) )\simeq H^0(S_0, \mathcal N'_{S_0|\mathcal X}). $
\end{corollary}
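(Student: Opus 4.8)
The plan is to deduce Corollary~\ref{cor:tangent-space-tacnode} from the identifications already obtained, by combining the deformation-theoretic facts from Lemma~\ref{lm:tangent-space-tacnode} with a dimension count bounded below by the smoothness criterion of Lemma~\ref{lm:upper-bound}. First I would record that the equivalence of \eqref{uno} and \eqref{due} is immediate: the ideal sheaf sequence
\begin{equation*}
\xymatrix{
0 \ar[r] & \mathcal N_{S_0|\mathcal X_0}\otimes I_{\{p_1,\dots,p_\delta\}|\mathcal X_0} \ar[r] & \mathcal N_{S_0|\mathcal X_0} \ar[r] & \bigoplus_{i=1}^\delta \mathbb C_{p_i} \ar[r] & 0
}
\end{equation*}
gives the long exact cohomology sequence, so $H^1$ of the twisted sheaf vanishes if and only if $H^1(S_0,\mathcal N_{S_0|\mathcal X_0})=0$ together with the surjectivity of the evaluation map $H^0(S_0,\mathcal N_{S_0|\mathcal X_0})\to\bigoplus_i\mathbb C_{p_i}$, the latter being exactly the statement that $h^0$ of the twisted sheaf equals $h^0(\mathcal N_{S_0|\mathcal X_0})-\delta$. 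The vanishing of $H^1(S_0,\mathcal N_{S_0|\mathcal X_0})$ also makes $\mathcal H^{\mathcal X_0}$ smooth at $[S_0]$ of dimension $h^0(S_0,\mathcal N_{S_0|\mathcal X_0})$, which is the parenthetical claim.

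Next, the heart of the argument: the tangent space to $\mathcal{ES}_{[S_0]}^{\mathcal X_0}$ (equivalently to $\mathbb T_\delta$, since $\mathcal{ES}_{[S_0]}^{\mathcal X_0}$ is Zariski-open in a component of $\mathbb T_\delta$) at $[S_0]$ is $H^0(S_0,\mathcal N'_{S_0|\mathcal X_0})$, the space of equisingular first-order deformations of $S_0$ inside $\mathcal X_0$, and this injects into $H^0(S_0,\mathcal N_{S_0|\mathcal X_0}\otimes I_{\{p_1,\dots,p_\delta\}|\mathcal X_0})$ because an equisingular deformation must keep each $p_i$ on the singular locus (this is the analogue for $T_1$-singularities of the classical fact for nodal curves, and follows locally from the description of $T^1_{S_0}$ supported on $\Sing(S_0)$; one checks it node-by-node as in the last paragraph of the proof of Lemma~\ref{lm:tangent-space-tacnode}). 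On the other hand, by Lemma~\ref{lm:upper-bound} there is a component $\mathbb T$ of $\mathbb T_\delta$ through $[S_0]$ of codimension at most $\delta$ in $\mathcal H^{\mathcal X_0}_{S_0}$, hence
$$
\dim_{[S_0]}\mathbb T_\delta \geq \dim\mathbb T \geq \dim\mathcal H^{\mathcal X_0}_{S_0}-\delta = h^0(S_0,\mathcal N_{S_0|\mathcal X_0})-\delta = h^0(S_0,\mathcal N_{S_0|\mathcal X_0}\otimes I_{\{p_1,\dots,p_\delta\}|\mathcal X_0}),
$$
using \eqref{due}. Combining this lower bound with the upper bound
$$
\dim_{[S_0]}\mathbb T_\delta \leq \dim T_{[S_0]}(\mathbb T_\delta) = h^0(S_0,\mathcal N'_{S_0|\mathcal X_0}) \leq h^0(S_0,\mathcal N_{S_0|\mathcal X_0}\otimes I_{\{p_1,\dots,p_\delta\}|\mathcal X_0})
$$
forces all inequalities to be equalities; in particular the local dimension equals the tangent-space dimension, so $\mathbb T_\delta$ (and the open subset $\mathcal{ES}_{[S_0]}^{\mathcal X_0}$) is smooth at $[S_0]$ of the asserted dimension, and the displayed chain of isomorphisms of tangent spaces holds, the identification $H^0(S_0,\mathcal N'_{S_0|\mathcal X})\simeq H^0(S_0,\mathcal N'_{S_0|\mathcal X_0})$ being supplied by Lemma~\ref{lm:tangent-space-tacnode}.

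The step I expect to be the main obstacle is making precise and rigorous the two-sided squeeze on the dimension — specifically, verifying that the inclusion $H^0(S_0,\mathcal N'_{S_0|\mathcal X_0})\hookrightarrow H^0(S_0,\mathcal N_{S_0|\mathcal X_0}\otimes I_{\{p_1,\dots,p_\delta\}|\mathcal X_0})$ together with Lemma~\ref{lm:upper-bound} genuinely pins down the dimension, rather than merely bounding it, and that the component $\mathbb T$ produced by Lemma~\ref{lm:upper-bound} is the \emph{same} component whose tangent space we are computing (this is where the smoothness of $\mathcal H^{\mathcal X_0}$ at $[S_0]$, hence the uniqueness of $\mathcal H^{\mathcal X_0}_{S_0}$, is essential). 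Everything else is bookkeeping with the cohomology sequence of the ideal sheaf twist and with the local computations already performed in Lemma~\ref{lm:tangent-space-tacnode}; the geometric content is entirely in Lemmas~\ref{lm:upper-bound} and~\ref{lm:tangent-space-tacnode}, and this corollary is their formal consequence under the extra vanishing hypothesis \eqref{uno}.
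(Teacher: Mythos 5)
Your proposal is correct and follows essentially the same route as the paper's proof: the ideal-sheaf sequence for the equivalence of \eqref{uno} and \eqref{due}, the lower bound on $\dim_{[S_0]}\mathcal{ES}_{[S_0]}^{\mathcal X_0}$ from Lemma \ref{lm:upper-bound}, the upper bound on the tangent space from the inclusion \eqref{eq:equisingular1} of Lemma \ref{lm:tangent-space-tacnode}, and the resulting squeeze. The only cosmetic difference is your use of the (undefined in the paper) symbol $\mathcal N'_{S_0|\mathcal X_0}$ for the equisingular normal sheaf inside $\mathcal X_0$, which Lemma \ref{lm:tangent-space-tacnode} identifies with $\mathcal N'_{S_0|\mathcal X}$ at the level of global sections, exactly as you say.
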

\begin{proof}
Consider the exact sequence
\begin{equation}\label{eq:standard-sequence1}
\xymatrix{ 0  \ar[r] &  \mathcal N_{S_0|\mathcal X_0}\otimes  I_{\{p_1,\dots,p_\delta\}|\mathcal X_0} \ar[r] &     \mathcal N_{S_0|\mathcal X_0} \ar[r] & 
\mathcal N_{S_0|\mathcal X_0}\otimes \mathcal O_{\{p_1,\dots,p_\delta\}}\ar[r] &  0, }
 \end{equation}
 from which one deduces that $h^1(S_0, \mathcal N_{ S_0|\mathcal X_0}\otimes I_{\{p_1,\dots,p_\delta\}|\mathcal X_0})=0$ if and only if $h^1(S_0, \mathcal N_{ S_0|\mathcal X_0})=0$ and
 $$
h^0(S_0,\mathcal N_{ S_0|\mathcal X_0}\otimes I_{\{p_1,\dots,p_\delta\}|\mathcal X_0})=
h^0(S_0,\mathcal N_{S_0|\mathcal X_0})-\delta.
$$
Assume that $h^1(S_0, \mathcal N_{ S_0|\mathcal X_0}\otimes I_{\{p_1,\dots,p_\delta\}|\mathcal X_0})=0$. Thus $[S_0]$ is a smooth point of $\mathcal H^{\mathcal X_0}$.
In particular there exists a unique component $\mathcal H_{S_0}^{\mathcal X_0}$ of $\mathcal H^{\mathcal X_0}$ containing $[S_0]$ and having  dimension
$h^0(S_0, \mathcal N_{ S_0|\mathcal X_0})$ at $[S_0]$. Now, by  Lemma \ref{lm:upper-bound}, one has that
$$
h^0(S_0, \mathcal N_{ S_0|\mathcal X_0})-\delta=\dim(\mathcal H_{S_0}^{\mathcal X_0})-\delta\leq\dim_{[S_0]}(\mathcal {ES}_{[S_0]}^{\mathcal X_0}))\leq\dim(T_{[S_0]}(\mathcal {ES}_{[S_0]}^{\mathcal X_0})).$$
On the other hand, by \eqref{eq:equisingular1}, one has that
{\footnotesize
$$
\dim(T_{[S_0]}(\mathcal {ES}_{[S_0]}^{\mathcal X_0}))\leq h^0(S_0,\mathcal N'_{S_0|\mathcal X})\leq  h^0(S_0,\mathcal N_{ S_0|\mathcal X_0}\otimes I_{\{p_1,\dots,p_\delta\}|\mathcal X_0})=h^0(S_0, \mathcal N_{ S_0|\mathcal X_0})-\delta.$$}
The corollary follows.
\end{proof}

We note the following:

\begin{lemma}\label{lm:speranza-fallita}
Let $[S_0]\in \mathcal H^{\mathcal X|\mathbb D}$  be any point corresponding to a reduced effective Cartier divisor $S_0=S_A\cup S_B\subset\mathcal X_0$.
Assume that $H^1(S_0,\mathcal N_{S_0|\mathcal X_0})=0$. Then the space of 
first order infinitesimal deformations of $S_0$ in $\mathcal X$ is given by
$$ H^0(S_0,\mathcal N_{S_0|\mathcal X})\simeq H^0(S_0, \mathcal N_{S_0|\mathcal X_0})\oplus H^0(S_0,\mathcal O_{S_0})$$
and
$$ H^1(S_0,\mathcal N_{S_0|\mathcal X})\simeq H^1(S_0,\mathcal O_{S_0})$$
is an obstruction space for $\mathcal O_{\mathcal H^{\mathcal X|\mathbb D}, [S_0]}.$

\end{lemma}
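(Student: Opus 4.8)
The plan is to derive both formulas from a single short exact sequence of sheaves on $S_0$, after identifying the deformation theory of $[S_0]$ in $\mathcal H^{\mathcal X|\mathbb D}$ with the one governed by the normal sheaf $\mathcal N_{S_0|\mathcal X}$. First I would record that $S_0$ is a local complete intersection in $\mathcal X$: in the local model of \S\ref{ssec:node}, $\mathcal X$ is the smooth hypersurface $\{xy=t\}$, $\mathcal X_0$ is cut out on it by $t$, and $S_0$ — being an effective Cartier divisor on $\mathcal X_0$ — is cut out on $\mathcal X$ by $(t,h_1)=(xy,h_1)$ for a local lift $h_1$ of its equation, where $h_1$ is a non-zerodivisor on $\mathcal O_{\mathcal X_0}=\mathbb C[x,y,z,u]/(xy)$; thus $(xy,h_1)$ is a regular sequence in the regular local ring of $\mathcal X$, so $S_0\hookrightarrow\mathcal X$ is a regular embedding of codimension $2$ and $\mathcal N_{S_0|\mathcal X}$ is locally free of rank $2$.

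By the standard deformation theory of lci subschemes of a smooth variety, the tangent space of the Hilbert scheme of the $4$-fold $\mathcal X$ at $[S_0]$ is then $H^0(S_0,\mathcal N_{S_0|\mathcal X})$, and $H^1(S_0,\mathcal N_{S_0|\mathcal X})$ is an obstruction space. I would then note that a closed subscheme of $\mathcal X$ with the Hilbert polynomial of the surface $S_0$ has finite image in $\mathbb D$ for dimension reasons, hence lies in a single fibre (here one uses, e.g., that $S_0$ is connected, as happens in all our applications), so this Hilbert scheme agrees with $\mathcal H^{\mathcal X|\mathbb D}$ in a neighbourhood of $[S_0]$. Consequently everything reduces to computing $H^0$ and $H^1$ of $\mathcal N_{S_0|\mathcal X}$.

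For this I would use the normal-bundle sequence of the flag $S_0\subset\mathcal X_0\subset\mathcal X$. All three embeddings are regular, so dualizing the conormal sequence of the flag gives
\[ 0\longrightarrow \mathcal N_{S_0|\mathcal X_0}\longrightarrow \mathcal N_{S_0|\mathcal X}\longrightarrow \mathcal N_{\mathcal X_0|\mathcal X}\otimes\mathcal O_{S_0}\longrightarrow 0 . \]
Since $\mathcal X_0$ is the fibre of $\mathcal X\to\mathbb D$ over $0$ we have $\mathcal N_{\mathcal X_0|\mathcal X}\simeq\mathcal O_{\mathcal X_0}$ (already used in the excerpt), so this becomes
\[ 0\longrightarrow \mathcal N_{S_0|\mathcal X_0}\longrightarrow \mathcal N_{S_0|\mathcal X}\longrightarrow \mathcal O_{S_0}\longrightarrow 0 . \]
This extension is classified by a class in $\mathrm{Ext}^1_{\mathcal O_{S_0}}(\mathcal O_{S_0},\mathcal N_{S_0|\mathcal X_0})\simeq H^1(S_0,\mathcal N_{S_0|\mathcal X_0})$, which is zero by hypothesis; hence it splits, $\mathcal N_{S_0|\mathcal X}\simeq\mathcal N_{S_0|\mathcal X_0}\oplus\mathcal O_{S_0}$. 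Taking cohomology and invoking $H^1(S_0,\mathcal N_{S_0|\mathcal X_0})=0$ once more yields $H^0(S_0,\mathcal N_{S_0|\mathcal X})\simeq H^0(S_0,\mathcal N_{S_0|\mathcal X_0})\oplus H^0(S_0,\mathcal O_{S_0})$ and $H^1(S_0,\mathcal N_{S_0|\mathcal X})\simeq H^1(S_0,\mathcal O_{S_0})$, which is an obstruction space for $\mathcal O_{\mathcal H^{\mathcal X|\mathbb D},[S_0]}$ by the first two steps.

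I expect the computation itself to be routine; the main work lies in the preliminaries. The step I would treat most carefully is the verification that $S_0$ is lci in $\mathcal X$ and that the flag conormal sequence is left-exact — both follow from the explicit local model $\{xy=t\}$ together with the fact that the local equation of a Cartier divisor on $\mathcal X_0$ is a non-zerodivisor there — as well as the identification of $\mathcal H^{\mathcal X|\mathbb D}$ near $[S_0]$ with the Hilbert scheme of the ambient $4$-fold. I would also emphasize that the splitting of the extension is precisely what gives the $H^1$ statement: the long exact sequence by itself only produces an injection $H^1(S_0,\mathcal N_{S_0|\mathcal X})\hookrightarrow H^1(S_0,\mathcal O_{S_0})$.
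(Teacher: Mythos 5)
Your proof is correct and follows essentially the same route as the paper: the paper also uses the normal bundle sequence $0\to \mathcal N_{S_0|\mathcal X_0}\to \mathcal N_{S_0|\mathcal X}\to \mathcal N_{\mathcal X_0|\mathcal X}|_{S_0}=\mathcal O_{S_0}\to 0$, observes that the extension class lies in $\mathrm{Ext}^1(\mathcal O_{S_0},\mathcal N_{S_0|\mathcal X_0})\simeq H^1(S_0,\mathcal N_{S_0|\mathcal X_0})=0$ so the sequence splits, and concludes by standard deformation theory. Your additional care about the lci hypothesis, the identification of $\mathcal H^{\mathcal X|\mathbb D}$ with the Hilbert scheme of $\mathcal X$ near $[S_0]$, and the remark that the splitting (not just the long exact sequence) is needed for the $H^1$ isomorphism are all sound elaborations of what the paper leaves implicit.
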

\begin{proof}
By the hypothesis  we have  ${\rm Ext}^1(\mathcal O_{S_0},\mathcal N_{{S_0}|\mathcal X_0})\simeq H^1(S_0,\mathcal N_{S_0|\mathcal X_0})=0$ and by
 the exact sequence 
\begin{equation}\label{eq:normali}
\xymatrix{
 0  \ar[r] &       \mathcal N_{S_0|\mathcal X_0}  \ar[r] &      \mathcal N_{S_0|\mathcal X}\ar[r] &
 \mathcal N_{\mathcal X_0|\mathcal X}|_{S_0}=\mathcal O_{S_0} \ar[r] &  0, }
 \end{equation}
we have that 
$$
\mathcal N_{S_0|\mathcal X}\simeq \mathcal N_{S_0|\mathcal X_0}\oplus \mathcal O_{S_0}.
$$
The statement then follows by standard deformation theory. \end{proof}

\begin{corollary}\label{cor:klop} Let $[S_0]\in \mathcal H^{\mathcal X|\mathbb D}$  be a point corresponding to a reduced effective Cartier divisor $S_0=S_A\cup S_B\subset\mathcal X_0$. Assume that $[S_0]$ belongs to an irreducible component $\mathcal H$ of $\mathcal H^{\mathcal X|\mathbb D}$ that dominates $\mathbb D$.
Suppose  that $H^1(S_0,\mathcal N_{S_0|\mathcal X_0})=0$. Then $[S_0]$ is a smooth point for $\mathcal H^{\mathcal X|\mathbb D}$ and $\dim(\mathcal H)=\dim(\mathcal H^{\mathcal X_0}_{[S_0]})+1$.

\end{corollary}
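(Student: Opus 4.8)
The plan is to study the structure morphism $f\colon\mathcal H^{\mathcal X|\mathbb D}\to\mathbb D$ in a neighbourhood of $[S_0]$ and to prove that it is \emph{smooth} at $[S_0]$; the smoothness of $[S_0]$ on $\mathcal H^{\mathcal X|\mathbb D}$ and the dimension formula will then be immediate. First I would record what happens on the fibre of $f$ over $0$: scheme-theoretically this fibre is $\mathcal H^{\mathcal X_0}$, a closed subscheme of $\mathcal H^{\mathcal X|\mathbb D}$ through $[S_0]$, and by the standard deformation theory of Hilbert schemes \cite{ser} its Zariski tangent space at $[S_0]$ is $H^0(S_0,\mathcal N_{S_0|\mathcal X_0})$ while $H^1(S_0,\mathcal N_{S_0|\mathcal X_0})$ is an obstruction space (here one uses that $S_0$ is a Cartier divisor on $\mathcal X_0$, so $\mathcal N_{S_0|\mathcal X_0}$ is a line bundle on $S_0$). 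Since $H^1(S_0,\mathcal N_{S_0|\mathcal X_0})=0$ by hypothesis, $\mathcal H^{\mathcal X_0}$ is smooth at $[S_0]$ of dimension $h^0(S_0,\mathcal N_{S_0|\mathcal X_0})$; in particular $[S_0]$ lies on a unique component $\mathcal H^{\mathcal X_0}_{[S_0]}$ of $\mathcal H^{\mathcal X_0}$ and $\dim\mathcal H^{\mathcal X_0}_{[S_0]}=h^0(S_0,\mathcal N_{S_0|\mathcal X_0})$.

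The heart of the matter — and the step I expect to be the genuine obstacle — is the smoothness of $f$ at $[S_0]$. This must be argued \emph{relatively}, not absolutely: by Lemma~\ref{lm:speranza-fallita} the obstruction space for deformations of $S_0$ in $\mathcal X$ (ignoring the map to $\mathbb D$) is $H^1(S_0,\mathcal O_{S_0})$, which in general does not vanish, so smoothness cannot be read off from it. Instead one uses that $\mathbb D$ is smooth, so every infinitesimal deformation of $0\in\mathbb D$ extends without obstruction along any small extension of Artinian bases; having fixed such an extension, the associated thickening of $\mathcal X_0$ inside $\mathcal X$ is flat over it, and the obstruction to carrying the subscheme $S_0$ along is the relative Hilbert-scheme obstruction, which lies in $H^1(S_0,\mathcal N_{S_0|\mathcal X_0})$ \cite{ser}. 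As this group vanishes, the map of local deformation functors induced by $f$ at $[S_0]$ is smooth, i.e.\ $f$ is smooth at $[S_0]$. Concretely, the point being proved is that every deformation of $S_0$ inside $\mathcal X_0$ spreads out, unobstructedly, to a deformation of $S_0$ inside $\mathcal X$ dominating $\mathbb D$; this is exactly the leverage that the hypothesis $H^1(\mathcal N_{S_0|\mathcal X_0})=0$ is designed to provide, and it is where the exact sequence \eqref{eq:normali} and Lemma~\ref{lm:speranza-fallita} are used to keep the ambient deformation of $\mathcal X_0$ under control.

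Granting that $f$ is smooth at $[S_0]$, the rest is routine. A morphism that is smooth at a point and whose target is a smooth curve has regular source at that point, with local dimension equal to one plus the local dimension of the fibre of $f$ over $0$; by the first step this is $h^0(S_0,\mathcal N_{S_0|\mathcal X_0})+1=\dim\mathcal H^{\mathcal X_0}_{[S_0]}+1$. Hence $[S_0]$ is a smooth point of $\mathcal H^{\mathcal X|\mathbb D}$, so there is a single component of $\mathcal H^{\mathcal X|\mathbb D}$ through $[S_0]$; it must be the component $\mathcal H$ of the statement — which, being the component through a point at which $f$ is smooth, dominates $\mathbb D$, so that hypothesis is consistent with, and indeed implied by, the others — and $\dim\mathcal H=\dim\mathcal H^{\mathcal X_0}_{[S_0]}+1$. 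As a cross-check one recovers the tangent space count from Lemma~\ref{lm:speranza-fallita}: $S_0$ is connected, its components $S_A\subset A$ and $S_B\subset B$ meeting along the nonempty curve $C=S_A\cap R=S_B\cap R$, so $h^0(S_0,\mathcal O_{S_0})=1$ and $\dim T_{[S_0]}\mathcal H^{\mathcal X|\mathbb D}=h^0(S_0,\mathcal N_{S_0|\mathcal X_0})+1$, matching the dimension just found; equivalently, one may combine the upper bound $\dim_{[S_0]}\mathcal H^{\mathcal X|\mathbb D}\le h^0(S_0,\mathcal N_{S_0|\mathcal X_0})+1$ from this tangent space with the lower bound $\dim\mathcal H\ge h^0(S_0,\mathcal N_{S_0|\mathcal X_0})+1$ from the unobstructed spreading-out of $\mathcal H^{\mathcal X_0}_{[S_0]}$ over $\mathbb D$.
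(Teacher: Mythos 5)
Your argument is correct, and its main thrust is genuinely different from the paper's. The paper's proof is a two-line dimension sandwich: the lower bound $\dim(\mathcal H)\geq \dim(\mathcal H^{\mathcal X_0}_{[S_0]})+1$ coming from the domination hypothesis, against the upper bound $\dim(\mathcal H)\leq h^0(S_0,\mathcal N_{S_0|\mathcal X})=h^0(S_0,\mathcal N_{S_0|\mathcal X_0})+1$ furnished by the tangent space computation of Lemma \ref{lm:speranza-fallita}; equality of the two forces the local dimension to equal the tangent space dimension, whence smoothness. You instead prove the stronger statement that the structure morphism $\mathcal H^{\mathcal X|\mathbb D}\to\mathbb D$ is \emph{smooth} at $[S_0]$, by identifying the relative obstruction space of the relative Hilbert functor with $H^1(S_0,\mathcal N_{S_0|\mathcal X_0})$ (legitimate here since $S_0$ is a Cartier divisor in $\mathcal X_0$, hence regularly embedded) and using smoothness of the base $\mathbb D$. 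This buys several things the paper's argument does not make explicit: it shows the domination hypothesis is automatic rather than an input; it substantiates the lower bound $\dim(\mathcal H)\geq\dim(\mathcal H^{\mathcal X_0}_{[S_0]})+1$, which the paper asserts without detail (it requires knowing that deformations of $S_0$ inside $\mathcal X_0$ spread out over $\mathbb D$, i.e.\ exactly your unobstructed-lifting step); and it avoids having to invoke the global splitting $\mathcal N_{S_0|\mathcal X}\simeq\mathcal N_{S_0|\mathcal X_0}\oplus\mathcal O_{S_0}$ except as a cross-check. Both arguments ultimately use $h^0(S_0,\mathcal O_{S_0})=1$, i.e.\ connectedness of $S_0$, at the point where $h^0(S_0,\mathcal N_{S_0|\mathcal X})=h^0(S_0,\mathcal N_{S_0|\mathcal X_0})+1$ is needed; the paper leaves this implicit, and you rightly flag it.
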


\begin{proof} One has $\dim(\mathcal H)\geq \dim (\mathcal H^{\mathcal X_0}_{[S_0]})+1=h^0(S_0, \mathcal N_{S_0|\mathcal X_0})+1$. On the other hand, 
$\dim(\mathcal H)\leq h^0(S_0,\mathcal N_{S_0|\mathcal X})=h^0(S_0, \mathcal N_{S_0|\mathcal X_0})+1$ by Lemma \ref {lm:speranza-fallita}. The assertion follows. \end{proof}

\begin{corollary}\label {cor:ind}
In the same setting as in Lemma \ref {lm:tangent-space-tacnode} and same hypotheses as in Lemma \ref {lm:upper-bound},  suppose that  \eqref{uno} (or equivalently \eqref{due}) holds,
assuring that $\mathbb T_\delta$ is smooth at $[S_0]$ of dimension $h^0(S_0, \mathcal N_{S_0|\mathcal X_0}\otimes  I_{\{p_1,\dots,p_\delta\}|\mathcal X_0})=h^0(S_0, \mathcal N_{S_0|\mathcal X_0})-\delta$.  Then for every positive integer $r<\delta$, the variety $\mathbb T_r$ is non--empty and $[S_0]\in \mathbb T_r$. More precisely, in an analytic neighborhood of $[S_0]$, $\mathbb T_r$ consists of $\delta \choose r$ smooth analytic branches each of dimension $h^0(S_0,\mathcal N_{ S_0|\mathcal X_0})-r$ that intersect at $[S_0]$ along a smooth analytic branch of $\mathbb T_\delta$.

Informally speaking, this is as saying that the $\delta$ singularities of type $T_1$ of $S_0$ can be {\rm independently smoothed} inside $\mathcal  X_0$. 
\end{corollary}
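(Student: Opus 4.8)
The plan is to read off the branch structure of $\mathbb{T}_r$ near $[S_0]$ from a single, purely local observation at each $p_i$: within deformations of $S_0$ inside the \emph{fixed} central fibre $\mathcal{X}_0$, ``$S_0$ keeps a $T_1$ singularity near $p_i$'' is the vanishing of one analytic function $v_i$, whose differential at $[S_0]$ is the evaluation of the normal section at $p_i$. Granting this, the corollary follows by combining the fact that $\mathbb{T}_\delta=\{v_1=\dots=v_\delta=0\}$ is smooth of codimension exactly $\delta$ at $[S_0]$ (Corollary \ref{cor:tangent-space-tacnode}) with the surjectivity of the evaluation map $H^0(S_0,\mathcal N_{S_0|\mathcal X_0})\to\bigoplus_i \mathcal N_{S_0|\mathcal X_0}|_{p_i}$ coming from \eqref{uno}.

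Concretely, set $\mathcal H:=\mathcal H^{\mathcal X_0}_{S_0}$, which by \eqref{uno} is smooth at $[S_0]$ of dimension $N:=h^0(S_0,\mathcal N_{S_0|\mathcal X_0})$, and work in a small analytic neighbourhood $U$ of $[S_0]$ in $\mathcal H$. For $[S_0']\in U$ write $S_0'=S_A'\cup S_B'$, $C'=S_A'\cap R$. Since $\mathcal X_0$ does not move in $\mathcal X$ (because $H^0(\mathcal X_0,\mathcal I_{R|\mathcal X_0})=0$, as exploited in Lemma \ref{lm:tangent-space-tacnode}), $S_0'$ is cut on $\mathcal X_0$ by a deformation of the Cartier divisor of $S_0$; in the local coordinates of \S\ref{ssec:node}, where $S_0$ is $x+y+h_{12}=xy=0$ with $h_{12}(0,0,z,u)$ having a node at $\underline 0=p_i$, this reads $(x+y+h_{12})+g=xy=0$ for a small analytic $g$. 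Then $S_A',S_B'$ are smooth near $p_i$ and, on $R$, $C'$ is the zero locus of $\tilde q_g:=h_{12}(0,0,z,u)+g(0,0,z,u)$; since $h_{12}(0,0,z,u)$ has a nondegenerate critical point at $\underline 0$ with critical value $0$, by the implicit function theorem $\tilde q_g$ has a unique nearby critical point, still nondegenerate, and I let $v_i([S_0'])$ be its critical value, an analytic function on $U$ vanishing at $[S_0]$. The two local facts I need are: (i) $S_0'$ has a $T_1$ singularity near $p_i$ iff $v_i([S_0'])=0$ (a nondegenerate critical point of $\tilde q_g$ at value $0$ is exactly a node of $C'$), and for $[S_0']$ close enough to $[S_0]$ these are the only $T_1$ singularities of $S_0'$ (elsewhere $S_0$ is either smooth or has a transversal double curve, open conditions, and $C$ is compact); (ii) the first variation of $v_i$ in the direction $\sigma\in H^0(S_0,\mathcal N_{S_0|\mathcal X_0})$ equals, up to a nonzero scalar fixed by the chosen local trivialisation, the value $\sigma(p_i)\in\mathcal N_{S_0|\mathcal X_0}|_{p_i}$ — indeed, by the envelope computation, shifting $g$ by a constant shifts the critical value by the same constant, since $\nabla\tilde q_g$ vanishes at the critical point.

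Now I assemble. By (i), for every $[S_0']\in U$ the $T_1$ points of $S_0'$ are exactly the $p_i$ with $v_i([S_0'])=0$, so $\mathbb{T}_r\cap U=\bigcup_{|J|=r}\bigcap_{i\in J}\{v_i=0\}$ and $\mathbb{T}_\delta\cap U=\{v_1=\dots=v_\delta=0\}$. By (ii), the differential at $[S_0]$ of $V:=(v_1,\dots,v_\delta)\colon U\to\mathbb A^\delta$ is, up to rescaling the coordinates, the evaluation map $H^0(S_0,\mathcal N_{S_0|\mathcal X_0})\to\bigoplus_{i=1}^\delta\mathcal N_{S_0|\mathcal X_0}|_{p_i}$, which is surjective since \eqref{uno} (equivalently \eqref{due}), via the sequence \eqref{eq:standard-sequence1}, gives $H^1(S_0,\mathcal N_{S_0|\mathcal X_0}\otimes I_{\{p_1,\dots,p_\delta\}|\mathcal X_0})=0$. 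Hence $V$ is a submersion at $[S_0]$, so after shrinking $U$ it is a smooth analytic map onto a neighbourhood of $0\in\mathbb A^\delta$; therefore the preimage under $V$ of any smooth subvariety through $0$ is smooth of the same codimension in $\mathcal H$. Applied to the coordinate subspaces $\{s_i=0,\ i\in J\}$ of codimension $|J|=r$, this shows that $\mathbb{T}_r\cap U$ is the union of the $\binom{\delta}{r}$ smooth analytic branches $\bigcap_{i\in J}\{v_i=0\}$, each of codimension $r$ in $\mathcal H$, i.e.\ of dimension $h^0(S_0,\mathcal N_{S_0|\mathcal X_0})-r$; each contains, and they all meet along, $V^{-1}(0)=\mathbb{T}_\delta\cap U$, which is the smooth branch of $\mathbb{T}_\delta$ through $[S_0]$ furnished by Corollary \ref{cor:tangent-space-tacnode}. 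Finally, submersivity of $V$ produces in $U$ surfaces with exactly $r$ singularities of type $T_1$, so $\mathbb{T}_r\neq\emptyset$ and $[S_0]\in\mathbb{T}_r$.

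The delicate point — and the main obstacle — is the local analysis of facts (i) and (ii): establishing that within deformations of $S_0$ inside the fixed $\mathcal X_0$ the $T_1$ condition at $p_i$ is the vanishing of a single analytic function whose differential is evaluation of the normal section at $p_i$. This is the one-dimension-up, $\mathcal X_0$-relative analogue of the classical statement that a node imposes one smooth condition and can be independently smoothed, and it is of the same nature as the explicit computations in \S\ref{ssec:node}--\S\ref{ssec:loc} and in Lemma \ref{lm:solo-nodo}. An essentially equivalent alternative would be to push everything to $R$ via the restriction map $\varphi$ of \eqref{eq:restr}: near $[S_0]$ one has $\mathbb{T}_r=\varphi^{-1}(V^R_r)$, with $V^R_r$ the Severi variety of $r$-nodal curves on $R$, $\varphi|_{\mathcal H}$ is submersive at $[S_0]$ by \eqref{uno}, and one invokes the classical independent-smoothing result for nodes of curves on a smooth surface (valid since $h^1(\mathcal N_{C|R}\otimes\mathcal I_{\{p_1,\dots,p_\delta\}|R})=0$). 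I would present the self-contained local version above, to keep the argument in line with the paper's earlier local computations.
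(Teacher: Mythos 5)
Your proof is correct, but it takes a genuinely different route from the paper's. The paper argues by a dimension sandwich: it introduces the incidence variety $I\subset U\times R$ of pairs $([S_0'],q)$ with $q$ a $T_1$ point, uses the dominance of $\varphi_{|\mathcal H^{\mathcal X_0}_{S_0}}$ onto $\mathcal H^R_C$ (a hypothesis of Lemma \ref{lm:upper-bound}) to bound each locus $\pi_1(\pi_2^{-1}(V_i))$ from below by $\dim(\mathcal H^{\mathcal X_0}_{S_0})-1$, and the tangent-space inclusion of Remark \ref{rem:gen} together with \eqref{uno} to bound it from above by $h^0(\mathcal N_{S_0|\mathcal X_0}\otimes I_{p_i})$; it then needs a separate semicontinuity argument to show the general member of each branch has exactly one $T_1$ point, and intersects branches for $r>1$. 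You instead exhibit each branch as the zero locus of an explicit analytic function $v_i$ (the critical value of the perturbed local equation restricted to $R$ near $p_i$) and show $(v_1,\dots,v_\delta)$ is a submersion at $[S_0]$ because its differential is the evaluation map $H^0(\mathcal N_{S_0|\mathcal X_0})\to\bigoplus_i\mathcal N_{S_0|\mathcal X_0}|_{p_i}$, surjective by \eqref{uno} via \eqref{eq:standard-sequence1}. Your local facts (i) and (ii) are sound: the Morse-lemma-with-parameters argument gives a unique nondegenerate critical point of $\tilde q_g$ near $p_i$, and the envelope computation correctly identifies $dv_i$ with evaluation at $p_i$ (in the trivialisation by $h_1^*$). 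What your approach buys: transversality of all the branches comes for free, the ``exactly $r$ singularities on the general member'' statement is automatic from submersivity, and you never use the dominance hypothesis of Lemma \ref{lm:upper-bound}, so you in fact prove a slightly stronger statement. What the paper's approach buys: it stays entirely within standard Hilbert-scheme dimension counts and sidesteps the one point you gloss over, namely that $v_i$ is only well defined up to multiplication by a unit (choice of local defining equation of $S_0'$ varying analytically over $U$) — harmless here since neither the zero locus nor the surjectivity of the differential at a zero is affected, but worth a sentence in a written-up version.
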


\begin{proof}  We first prove the assertion for $r=1$. 

 Let $\mathbb T$ be the closure of the subset of the Hilbert scheme  $\mathcal H_{S_0}^{\mathcal X_0}$
 consisting of all surfaces $S_0'$ such that the intersection curve of $S_0'$ with $R=A\cap B$ is singular with at most nodes.  Observe that  $[S_0]\in \mathbb T$. 
 
 We claim that any irreducible component of $\mathbb T$ that contains $[S_0]$ has exactly codimension 1 in  $\mathcal H_{S_0}^{\mathcal X_0}$. Indeed, let $\mathbb T'$ be such a component. Consider the dominant map $\varphi_{|\mathcal H^{\mathcal X_0}_{S_0}}$ as in \eqref {eq:restr}, which is defined at a general point of $\mathbb T'$. 
 
 By our hypotheses, the general element in $\mathcal H_C^R$ is a smooth curve, hence $\mathbb T'$ has codimension at least 1 in $\mathcal H_{S_0}^{\mathcal X_0}$. Let $\mathcal T$ be the image of the restriction of $\varphi_{|\mathcal H^{\mathcal X_0}_{S_0}}$ to $\mathbb T'$. Then $\mathcal T$ has codimension 1 in $\mathcal H_C^R$.

Let $\alpha$ be the dimension of the general fibre of $\varphi_{|\mathcal H^{\mathcal X_0}_{S_0}}$ and let $\beta$ be the dimension of the general fibre of the restriction of $\varphi_{|\mathcal H^{\mathcal X_0}_{S_0}}$ to $\mathbb T'$.  One has $\alpha\leq \beta$. Then
$$
\dim (\mathcal H_{S_0}^{\mathcal X_0})=\dim(\mathcal H_C^R)+\alpha
$$
and 
$$
\dim (\mathbb T')=\dim(\mathcal T)+\beta=\dim(\mathcal H_C^R)-1+\beta\geq \dim(\mathcal H_C^R)-1+\alpha=\dim (\mathcal H_{S_0}^{\mathcal X_0})-1.
$$
Since $\dim (\mathbb T')<\dim (\mathcal H_{S_0}^{\mathcal X_0})$ we have
$\dim (\mathbb T')=\dim (\mathcal H_{S_0}^{\mathcal X_0})-1$ and $\alpha=\beta$, as claimed.

Now, we consider a suitably small analytic open neighborhood $U$ of $[S_0]$ in $\mathbb T$. Every surface $S_0'$ such that $[S_0']\in U$ has at most $\delta$ singularities of type $T_1$. 

Consider  the variety $I\subset U\times R$ consisting of all pairs $([S_0'],q)$ with $q$ a $T_1$ singularity of $S_0'$.  Let $\pi_1: I\to U$ and $\pi_2: I\to R$ be the two projections. The former one  has finite fibres, implying the every irreducible component of $I$ has dimension 
$\dim (\mathcal H_{S_0}^{\mathcal X_0})-1$.  As for the latter,  it is dominant,  because we assume the hypotheses of Lemma \ref{lm:upper-bound}. Moreover, if $q\in R$ is a point, the fibre $\pi_2^{-1}(q)$ is the locally closed set of surfaces $S_0'$ in   $\mathcal H_{S_0}^{\mathcal X_0}$ having a $T_1$ singularity in $q$. 

Let $V_i$ be a sufficiently small analytic neighborhood of $p_i$ in $R$ for $i=1,\ldots, \delta$.  By the above considerations, 
 $\pi_1(\pi_2^{-1}(V_i))\subseteq U$ is an analytic open set that parametrizes deformations of $S_0$ which are analytically equisingular at $p_i$. 
 Hence, by Remark \ref {rem:gen}, the tangent space to $\pi_1(\pi_2^{-1}(V_i))$ at $[S_0]$ is contained in $H^0(S,\mathcal N_{ S_0|\mathcal X_0}\otimes I_{p_i|\mathcal X_0})$
and
$$
\dim_{[S_0]}(\mathcal H^{\mathcal X_0}_{S_0})-1=\dim_{[S_0]}(\pi_1(\pi_2^{-1}(V_i))) \leq 
h^0(S_0,\mathcal N_{ S_0|\mathcal X_0}\otimes I_{p_i|\mathcal X_0}).
$$
If \eqref{uno} holds, then $h^1(S_0,\mathcal N_{ S_0|\mathcal X_0}\otimes I_{p_i|\mathcal X_0})=0$ for all $i=1,\ldots, \delta$, 
and  one has 
$$h^0(S_0,\mathcal N_{ S_0|\mathcal X_0}\otimes I_{p_i|\mathcal X_0})=h^0(S_0,\mathcal N_{ S_0|\mathcal X_0})-1=\dim_{[S_0]}(\mathcal H^{\mathcal X_0}_{S_0})-1.$$
Thus $\pi_1(\pi_2^{-1}(V_i))$, that is an open analytic subset of $\mathcal {ES}_{[S_0],p_i}^{\mathcal X_0}$, is an analytic branch of $U$ of dimension $h^0(S_0,\mathcal N_{ S_0|\mathcal X_0})-1$, smooth at $[S_0]$. 

Next we prove that the general element $[S_0']$ in $\pi_1(\pi_2^{-1}(V_i))$ has a unique $T_1$ singularity. We argue for the  case $i=1$ and the proof is analogous in the other cases. Suppose this is not the case, and that $S_0'$ has $s$ singularities $q_1,\ldots, q_s$ of type $T_1$ with $s>1$.  When $S_0'$ specializes to $S_0$, $q_1,\ldots, q_s$ specialize, say, to $p_1,\ldots, p_s$. By the same argument as above, the tangent space to $\pi_1(\pi_2^{-1}(V_1))$ at $[S_0']$ is contained in $H^0(S_0' ,\mathcal N_{ S_0'|\mathcal X_0}\otimes I_{\{q_1,\ldots, q_s\}|\mathcal X_0})$ and,  under the hypothesis \eqref{uno}, one has
{\tiny
$$
h^0(S_0',\mathcal N_{ S'_0|\mathcal X_0}\otimes I_{\{q_1,\ldots, q_s\}|\mathcal X_0})\leq 
h^0(S_0,\mathcal N_{ S_0|\mathcal X_0}\otimes I_{\{p_1,\ldots, p_s\}|\mathcal X_0})=
h^0(S_0,\mathcal N_{ S_0|\mathcal X_0})-s<h^0(S_0,\mathcal N_{ S_0|\mathcal X_0})-1
$$}
and this is a contradiction. This proves the assertion for $r=1$. 

Consider now the case $\delta>r>1$. Fix $p_{i_1},\ldots, p_{i_r}$ distinct points among $p_1,\ldots, p_\delta$. The intersection 

$$ \mathfrak T_{i_1,\ldots, i_r}:=\bigcap_{j=1}^r \pi_1(\pi_2^{-1}(V_{i_j})),$$
that is an  analytic open subset of $\mathcal{ES}_{[S_0],p_{i_1},\ldots, p_{i_r}}^{\mathcal X_0}$, 
is an analytic variety  in  $\mathcal H^{\mathcal X_0}_{S_0}$
parametrizing deformations of $S_0$ that are analytic equisingular at the points $p_{i_1},\ldots, p_{i_r}$. With the same argument as above, one sees that,  under the hypothesis \eqref{uno}, 
$\mathfrak T_{i_1,\ldots, i_r}$ is smooth of codimension $r$ in  $\mathcal H_{S_0}^{\mathcal X_0}$, with tangent space at $[S_0]$ given by $H^0(S_0,\mathcal N_{ S_0|\mathcal X_0}\otimes I_{\{p_{i_1},\dots,p_{i_r}\}|\mathcal X_0})$.

Moreover, again by the same argument as above, the general  element $S'_0$ in $\mathfrak T_{i_1,\ldots, i_r}$ has exactly $r$ singularities of type $T_1$ at points specializing to $p_{i_1}, \ldots, p_{i_r}$ when $S'_0$ specializes to $S_0$. So $\mathfrak T_{i_1,\ldots, i_r}$ is a smooth analytic branch of $\mathbb T_r$ containing $[S_0]$, and 
this ends the proof of the corollary. \end{proof}

Let now $\mathbb T_{\delta_A,\delta_B,\delta_R}\subseteq \mathcal H^{\mathcal X_0}$ be the Zariski closure of the family of surfaces $S_0=S_A\cup S_B$ in $\mathcal X_0$ with $\delta_A$ nodes on $A$ and $\delta_B$ nodes on $B$ off $R$ and $\delta_R$ singularities of type $T_1$ on $R$.

\begin{corollary}\label{cor:defo}
Let $S_0=S_A\cup S_B$ be a reduced effective Cartier divisor such that $S_A$ and $S_B$ have respectively $\delta_A$ and $\delta_B$ nodes $p_{A,1},\ldots, p_{A,\delta_A}$ and $p_{B,1},\ldots, p_{B,\delta_B}$ off $R$,  are elsewhere smooth and  intersect transversally along a curve $C=S_A\cap S_B$, except
for $\delta_R$ distinct points $p_{R,1}, \ldots, p_{R,\delta_R} \in C\subset R$ where $S_0$ has singularities of type $T_1$. Let  $\mathcal {ES}^{\mathcal X_0}_{[S_0]}$ be the locally closed  set of equisingular deformations of $S_0$ in $\mathcal X_0$. Consider the ideal sheaf
$I_{\mathfrak Z|\mathcal X_0}$ in $\mathcal X_0$ of the $0$--dimensional reduced scheme  $\mathfrak Z$  of lenght $\delta=\delta_A+\delta_B+\delta_R$ given by
$$
\mathfrak Z= \sum_{i=1}^{\delta_A} p_{A,i}+\sum_{i=1}^{\delta_B} p_{B,i}+\sum_{i=1}^{\delta_R} p_{R,i}.
$$
Then 
\begin{equation}\label{tangente-nodo}
T_{[S_0]}(\mathcal {ES}^{\mathcal X_0}_{[S_0]})\subseteq H^0(S_0, \mathcal N_{S_0|\mathcal X_0}\otimes  \mathcal I_{\mathfrak Z|\mathcal X_0}).
\end{equation}

If
\begin{equation}\label{uno-nodi}
h^1(S_0, \mathcal N_{S_0|\mathcal X_0}\otimes  \mathcal I_{\mathfrak Z|\mathcal X_0})=h^1(C, \mathcal N_{C|R}\otimes  \mathcal I_{\{p_{R,1}, \ldots, p_{R,\delta_R} \}|R})=0
\end{equation}
and the map
 $$
 \varphi_{|\mathcal H^{\mathcal X_0}_{S_0}}: \mathcal H^{\mathcal X_0}_{S_0}\dasharrow \mathcal H_C^R
 $$ 
 defined as in Lemma \ref {lm:upper-bound} is dominant,
then the equality holds in \eqref{tangente-nodo}, and the locally closed set $\mathcal {ES}^{\mathcal X_0}_{[S_0]}$ of locally trivial deformations of $S_0$ in $\mathcal X_0$ is smooth at $[S_0]$ of dimension $h^0(S_0, \mathcal N_{S_0|\mathcal X_0}\otimes  I_{\mathfrak Z|\mathcal X_0})=h^0(S_0, \mathcal N_{S_0|\mathcal X_0})-\delta$. In particular there exists only one irreducible component $\mathbb T\subset \mathbb T_{\delta_A,\delta_B,\delta_R}$
containing the point $[S_0]$ (which is smooth at $[S_0]$ and contains $\mathcal {ES}^{\mathcal X_0}_{[S_0]}$ as a Zariski open set). 
Moreover, under these hypotheses, the singularities of $S_0$ may be smoothed independently in $\mathcal X_0$. More precisely,
for every $\delta_A'\leq \delta_A$, $\delta_B'\leq\delta_B$ and $\delta'_R\leq\delta_R$ we have that
 $\mathbb T_{\delta_A',\delta_B',\delta_R'}$ is non-empty and $[S_0]\in\mathbb T_{\delta_A',\delta_B',\delta_R'}$. In an analytic neighborhood of $[S_0]$, 
 $\mathbb T_{\delta_A',\delta_B',\delta_R'}$ consists of
$$
{\delta_R \choose \delta'_R}\, {\delta_A \choose \delta'_A}\, {\delta_B \choose \delta'_B}
$$
smooth analytic branches of dimension $h^0(S_0, \mathcal N_{S_0|\mathcal X_0})-\delta'$, where 
$\delta'=\delta_A'+\delta_B'+\delta_R'$, that intersect at $[S_0]$ along a smooth analytic branch of $\mathbb T\subset\mathbb T_{\delta_A,\delta_B,\delta_R}$, corresponding to deformations of $[S_0]$ preserving $\delta_R'$ points of type $T_1$ and  $\delta_A'$ nodes on $A$ and $\delta_B'$ nodes on $B$.
\end{corollary}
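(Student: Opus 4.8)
The plan is to split the $\delta$ singular points of $S_0$ into the $\delta_R$ points of type $T_1$ lying on $R$ and the $\delta_A+\delta_B$ nodes lying in the smooth locus of $\mathcal X_0$, i.e. off $R$, and to handle the former by the local computation already made in Lemma \ref{lm:tangent-space-tacnode} and Remark \ref{rem:gen}, the latter by the classical deformation theory of an ordinary double point of a surface in a smooth threefold (see e.g. \cite{ser}). \emph{Inclusion \eqref{tangente-nodo}.} At a point $p_{R,i}$, Remark \ref{rem:gen} tells us that a first order locally trivial deformation of $S_0$ restricts to a section of $\mathcal N_{S_0|\mathcal X_0}$ vanishing at $p_{R,i}$. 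At a node $q\in\{p_{A,j}\}\cup\{p_{B,j}\}$ the threefold $\mathcal X_0$ is smooth and $S_0$ has an $A_1$ singularity, so localizing the sequence \eqref{eq:standard-sequence} at $q$ exactly as in the proof of Lemma \ref{lm:tangent-space-tacnode}, but now with the local model $x^2+y^2+z^2=0$ in the smooth ambient $\mathcal X_0$, shows that a first order equisingular deformation corresponds near $q$ to a section of $\mathcal N_{S_0|\mathcal X_0}$ vanishing at $q$. Intersecting all these conditions yields $T_{[S_0]}(\mathcal {ES}^{\mathcal X_0}_{[S_0]})\subseteq H^0(S_0,\mathcal N_{S_0|\mathcal X_0}\otimes\mathcal I_{\mathfrak Z|\mathcal X_0})$, which is \eqref{tangente-nodo}.

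\emph{Equality and smoothness.} Assume now \eqref{uno-nodi} and that $\varphi_{|\mathcal H^{\mathcal X_0}_{S_0}}$ is dominant. Since $h^1(S_0,\mathcal N_{S_0|\mathcal X_0}\otimes\mathcal I_{\mathfrak Z|\mathcal X_0})=0$, the sequence \eqref{eq:standard-sequence1} (with $\{p_1,\dots,p_\delta\}$ replaced by $\mathfrak Z$) gives $h^1(S_0,\mathcal N_{S_0|\mathcal X_0})=0$, so $\mathcal H^{\mathcal X_0}$ is smooth at $[S_0]$ with a unique component $\mathcal H^{\mathcal X_0}_{S_0}$ through it of dimension $h^0(S_0,\mathcal N_{S_0|\mathcal X_0})$, and $h^0(S_0,\mathcal N_{S_0|\mathcal X_0}\otimes\mathcal I_{\mathfrak Z|\mathcal X_0})=h^0(S_0,\mathcal N_{S_0|\mathcal X_0})-\delta$. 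For the lower bound I would adapt the proof of Lemma \ref{lm:upper-bound} to this mixed situation: via $\varphi$ the surfaces near $[S_0]$ in $\mathbb T_{\delta_A,\delta_B,\delta_R}$ cut on $R$ a $\delta_R$-nodal curve, hence map into the Severi variety $V$ of $\delta_R$-nodal curves on $R$, which by $h^1(C,\mathcal N_{C|R}\otimes\mathcal I_{\{p_{R,1},\dots,p_{R,\delta_R}\}|R})=0$ is smooth at $[C]$ of codimension $\delta_R$ in $\mathcal H^R_C$; the dominance of $\varphi_{|\mathcal H^{\mathcal X_0}_{S_0}}$ together with $[C]\in\varphi(\mathbb T_{\delta_A,\delta_B,\delta_R})$ produces, exactly as in Lemma \ref{lm:upper-bound}, a component $\mathbb T$ of $\mathbb T_{\delta_A,\delta_B,\delta_R}$ containing $[S_0]$ and dominating $V$; and a general fibre of $\varphi_{|\mathbb T}$ over $V$ is a family of surfaces with prescribed trace on $R$ carrying $\delta_A+\delta_B$ nodes in the smooth locus of $\mathcal X_0$, hence of codimension at most $\delta_A+\delta_B$ in the corresponding fibre of $\varphi_{|\mathcal H^{\mathcal X_0}_{S_0}}$, because imposing a node at a point free to vary in the ambient threefold is at most one condition. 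The dimension bookkeeping of Lemma \ref{lm:upper-bound} then gives $\operatorname{codim}(\mathbb T,\mathcal H^{\mathcal X_0}_{S_0})\leq\delta_R+\delta_A+\delta_B=\delta$, i.e. $\dim_{[S_0]}\mathcal {ES}^{\mathcal X_0}_{[S_0]}\geq h^0(S_0,\mathcal N_{S_0|\mathcal X_0})-\delta$. Combining this with \eqref{tangente-nodo} one gets the chain $h^0(S_0,\mathcal N_{S_0|\mathcal X_0})-\delta\leq\dim_{[S_0]}\mathcal {ES}^{\mathcal X_0}_{[S_0]}\leq\dim T_{[S_0]}(\mathcal {ES}^{\mathcal X_0}_{[S_0]})\leq h^0(S_0,\mathcal N_{S_0|\mathcal X_0}\otimes\mathcal I_{\mathfrak Z|\mathcal X_0})=h^0(S_0,\mathcal N_{S_0|\mathcal X_0})-\delta$, so all inequalities are equalities: \eqref{tangente-nodo} holds with equality, $\mathcal {ES}^{\mathcal X_0}_{[S_0]}$ is smooth at $[S_0]$ of dimension $h^0(S_0,\mathcal N_{S_0|\mathcal X_0})-\delta$, and there is a unique component $\mathbb T\subset\mathbb T_{\delta_A,\delta_B,\delta_R}$ through $[S_0]$, containing $\mathcal {ES}^{\mathcal X_0}_{[S_0]}$ as a dense open subset.

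\emph{Independent smoothing.} Here I would mimic the proof of Corollary \ref{cor:ind}. In a small analytic neighbourhood $U$ of $[S_0]$, for each singular point $p$ of $S_0$ (a $T_1$ point $p_{R,i}$, or a node $p_{A,i}$ or $p_{B,i}$) let $U_p\subseteq U$ be the analytic subset of surfaces keeping a singularity of the same type near $p$. By the tangent space analysis above and $h^1(S_0,\mathcal N_{S_0|\mathcal X_0}\otimes\mathcal I_{p|\mathcal X_0})=0$ (which follows from \eqref{uno-nodi}), each $U_p$ is smooth at $[S_0]$ of codimension $1$ in $\mathcal H^{\mathcal X_0}_{S_0}$ with tangent space $H^0(S_0,\mathcal N_{S_0|\mathcal X_0}\otimes\mathcal I_{p|\mathcal X_0})$ --- for the $T_1$ points this is the content of Corollary \ref{cor:ind}, for the nodes it is the classical smoothness of the Severi variety of nodal surfaces under the corresponding $h^1$-vanishing. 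For any subset $\mathfrak Z'\subseteq\{p_{R,1},\dots,p_{R,\delta_R},p_{A,1},\dots,p_{A,\delta_A},p_{B,1},\dots,p_{B,\delta_B}\}$, the intersection $\bigcap_{p\in\mathfrak Z'}U_p$ is then smooth at $[S_0]$ of codimension $|\mathfrak Z'|$ with tangent space $H^0(S_0,\mathcal N_{S_0|\mathcal X_0}\otimes\mathcal I_{\mathfrak Z'|\mathcal X_0})$ --- the conditions being independent precisely because $h^1(S_0,\mathcal N_{S_0|\mathcal X_0}\otimes\mathcal I_{\mathfrak Z|\mathcal X_0})=0$ --- and, arguing as in Corollary \ref{cor:ind}, its general element has \emph{exactly} the singularities indexed by $\mathfrak Z'$, no unexpected extra singularity being allowed by the same codimension count. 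Letting $\mathfrak Z'$ run over all choices of $\delta_R'$ of the $p_{R,i}$, $\delta_A'$ of the $p_{A,i}$ and $\delta_B'$ of the $p_{B,i}$, we obtain that $\mathbb T_{\delta_A',\delta_B',\delta_R'}$ is non-empty, contains $[S_0]$, and near $[S_0]$ is the union of $\binom{\delta_R}{\delta_R'}\binom{\delta_A}{\delta_A'}\binom{\delta_B}{\delta_B'}$ smooth analytic branches of dimension $h^0(S_0,\mathcal N_{S_0|\mathcal X_0})-\delta'$, all meeting at $[S_0]$ along the smooth branch $\mathcal {ES}^{\mathcal X_0}_{[S_0]}$ of $\mathbb T$, which is the assertion.

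I expect the main obstacle to be the lower-bound codimension estimate for $\mathbb T_{\delta_A,\delta_B,\delta_R}$, and within it the claim that restricting $\varphi$ to $\mathbb T_{\delta_A,\delta_B,\delta_R}$ drops the generic fibre dimension by at most $\delta_A+\delta_B$: one has to be sure that on the fibre of $\varphi$ over a general curve of $\mathcal H^R_C$ --- a ``linear system'' of surfaces with fixed trace on $R$ --- imposing the $\delta_A+\delta_B$ nodes in the smooth part of $\mathcal X_0$ is genuinely at most $\delta_A+\delta_B$ conditions, and that under the deformation these nodes neither collide with each other nor run into $R$; the latter is taken care of by shrinking the analytic neighbourhoods, while the former is the standard fact that a node of a hypersurface imposes at most one condition when its position is allowed to vary. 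Everything else is a combination of Lemma \ref{lm:tangent-space-tacnode}, Lemma \ref{lm:upper-bound}, Corollary \ref{cor:tangent-space-tacnode}, Corollary \ref{cor:ind} and classical node deformation theory.
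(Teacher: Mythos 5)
Your proposal is correct and follows essentially the same strategy as the paper: the inclusion \eqref{tangente-nodo} via Lemma \ref{lm:tangent-space-tacnode}, Remark \ref{rem:gen} and classical nodal deformation theory; then a sandwich between a Lemma \ref{lm:upper-bound}--style parameter count (dominating the Severi variety of $\delta_R$-nodal curves on $R$, with each node off $R$ imposing at most one further condition) and the tangent-space upper bound forced by \eqref{uno-nodi}; and finally the independent-smoothing statement by the argument of Corollary \ref{cor:ind}. The only cosmetic difference is that the paper organizes the lower bound as the intersection of the two separately analyzed loci $\mathcal{ES}^{\mathcal X_0}_{[S_0],p_{R,1},\ldots,p_{R,\delta_R}}$ and $\mathcal{ES}^{\mathcal X_0}_{[S_0],\mathfrak Z_A,\mathfrak Z_B}$, whereas you bundle the count into a single fibration over the curve Severi variety; the bookkeeping is identical.
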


\begin{proof}
Let $S_0$ be a surface as in the statement. The inclusion \eqref{tangente-nodo} follows from Lemma \ref{lm:tangent-space-tacnode}, Remark \ref{rem:gen} and  well known deformation theory of nodal surfaces (see \cite [\S 2.3] {GK}). It can be proved by using \eqref{eq:standard-sequence}. In particular,
if one localizes \eqref{eq:standard-sequence} at a node $p$ of $S_0$, then $H^0(S_0,T^1_{{S_0},p})\cong \mathbb C$ can be identified with the tangent space to the versal deformation space of a node. 

Now we want to prove that, under the  hypotheses of the corollary, the locally closed set $\mathcal {ES}^{\mathcal X_0}_{[S_0]}$ is smooth at $[S_0]$ of 
 codimension $\delta$ in the Hilbert scheme $\mathcal H^{\mathcal X_0}_{S_0}$.  

Let $C\subset R$ be the $\delta_R$-nodal curve cut out by $S_0$ on $R$. By the hypothesis \eqref{uno-nodi}, one has that 
$h^1(C, \mathcal N_{C|R}\otimes \mathcal I_{\{p_{R,1}, \ldots, p_{R,\delta_R}\}|R})=0.$ This implies that $[C]$ is a smooth point
of the locally closed Severi variety $\mathcal V_\delta$ of $\delta$-nodal curves in $\mathcal H^R_C$. Let $V$ be the unique irreducible component of $\mathcal V_\delta$
containing $[C]$. Now, as we saw in Lemma \ref{lm:upper-bound}, $ \varphi_{|\mathcal H^{\mathcal X_0}_{S_0}}^{-1}(V)$
has at least one irreducible component of codimension at most $\delta_R$ in  $\mathcal H^{\mathcal X_0}_{S_0}$. On the other hand, by Lemma \ref{lm:tangent-space-tacnode} we have that
$$T_{[{S_0}]}( \varphi_{|\mathcal H^{\mathcal X_0}_{S_0}}^{-1}(V))\subseteq H^0(S_0, \mathcal N_{S_0|\mathcal X_0}\otimes \mathcal I_{\{p_{R,1}, \ldots, p_{R,\delta_R}\}|\mathcal X_0})$$
and   by \eqref{uno-nodi} we have  $h^0(S_0, \mathcal N_{S_0|\mathcal X_0}\otimes \mathcal I_{\{p_{R,1}, \ldots, p_{R,\delta_R}\}|\mathcal X_0})=\dim(\mathcal H^{\mathcal X_0}_{S_0})-\delta_R$.
Thus $\varphi_{|\mathcal H^{\mathcal X_0}_{S_0}}^{-1}(V)$ is smooth at $[S_0]$ of codimension $\delta_R$. 
 We observe that $\varphi_{|\mathcal H^{\mathcal X_0}_{S_0}}^{-1}(V)$  is an analytic open set of the variety
 $\mathcal {ES}^{\mathcal X_0}_{[S_0],p_{R,1}, \ldots, p_{R,\delta_R}}$ of deformations of $S_0$ in $\mathcal X_0$ that are locally trivial at every $T_1$ singularity $p_{R,i}$ and
 we just proved that
 
 $$
 T_{[S_0]}(\mathcal {ES}^{\mathcal X_0}_{[S_0],p_{R,1}, \ldots, p_{R,\delta_R}})=T_{[S_0]}( \varphi_{|\mathcal H^{\mathcal X_0}_{S_0}}^{-1}(V))=
  H^0(S_0, \mathcal N_{S_0|\mathcal X_0}\otimes \mathcal I_{\{p_{R,1}, \ldots, p_{R,\delta_R}\}|\mathcal X_0}).$$

We morever observe that the general element
$[S'_0]$ of $\varphi_{|\mathcal H^{\mathcal X_0}_{S_0}}^{-1}(V)$ corresponds to a surface $S'_0=S'_A\cup S'_B$, where $S'_A$ and $S'_B$ intersect transversally along a curve $C'$ on $R$, except for $\delta_R$ points $p_{R,1}', \ldots, p_{R,\delta_R}'\in C'$, which are singularities of type $T_1$
of $S'_0$, and specialize to $p_{R,1}, \ldots, p_{R,\delta_R}$ as $S_0'$ specializies to $S_0$. 

We claim that $S_A'$ and $S_B'$ are smooth outside $R$.
 Indeed, since $[S_0]$ belongs to $ \varphi_{|\mathcal H^{\mathcal X_0}_{S_0}}^{-1}(V)$, the surface $S'_0$ may have at most $\delta'_A\leq\delta_A$ nodes $p_{A,1},\ldots, p_{A,\delta'_A}$   on $A$ and $\delta'_B\leq\delta_B$
nodes $p_{B,1},\ldots, p_{B,\delta'_B}$ on $B$, deformations of $\delta_A'$   nodes of $S_0$ on $A$ and $\delta_B'$ nodes of $S_0$ on $B$.
If this happens, denoting by $\mathfrak Z'$ the scheme of singular points of $S'_0$, then 
$
 T_{[S'_0]}(\mathcal {ES}^{\mathcal X_0}_{[S_0],p_{R,1}, \ldots, p_{R,\delta_R}})=T_{[S'_0]}\varphi_{|\mathcal H^{\mathcal X_0}_{S_0}}^{-1}(V)\subseteq
  H^0(S'_0,\mathcal N_{S'_0|\mathcal X_0}\otimes I_{\mathfrak Z'|\mathcal X_0}).
$
But, once again by \eqref{uno-nodi} and by semicontinuity, one has that $h^0(S_0',\mathcal N_{S_0'|\mathcal X_0}\otimes I_{\mathfrak Z'|\mathcal X_0})=
H^0(S'_0,\mathcal N_{S'_0|\mathcal X_0})-\delta_R-\delta'_A-\delta_B'.$ It follows that $\delta'_A=\delta_B'=0$, i.e. $S'_A$ and $S'_B$ are smooth off $R$
and  $\varphi_{|\mathcal H^{\mathcal X_0}_{S_0}}^{-1}(V)$  is a locally closed set in one irreducible component $\mathbb T\subset\mathbb T_{\delta_R}$,
smooth at $[S_0]$.
 We just proved that, under our hypotheses,  one may deform $S_0$ in $\mathcal X_0$ by smoothing all nodes of $S_0$ and  by preserving all $T_1$ singularities.

Let now $\mathfrak Z_A$ and $\mathfrak Z_B$ be respectively the scheme of nodes of $S_0$ on $A$ and $B$. Let $\mathcal {ES}^{\mathcal X_0}_{[S_0],\mathfrak Z_A,\mathfrak Z_B}$ the  scheme of deformations of $S_0$ which are locally trivial at every node of $S_0$. By standard deformation theory of nodal surfaces, one has that, under the hypothesis $h^1(S_0, \mathcal N_{S_0|\mathcal X_0}\otimes  \mathcal I_{\mathfrak Z_A\cup \mathfrak Z_B|\mathcal X_0})=0$ (that holds by \eqref{uno-nodi}), $\mathcal {ES}^{\mathcal X_0}_{[S_0],\mathfrak Z_A,\mathfrak Z_B}$ is smooth of codimension $\delta_A+\delta_B$ in $\mathcal H^{\mathcal X_0}_{S_0}$ at $[S_0]$ and moreover 
$$T_{[S_0]}(\mathcal {ES}^{\mathcal X_0}_{[S_0],\mathfrak Z_A,\mathfrak Z_B})=H^0(S_0, \mathcal N_{S_0|\mathcal X_0}\otimes  \mathcal I_{\mathfrak Z_A\cup \mathfrak Z_B|\mathcal X_0}).$$
 With a similar  argument as above, one sees that the general element $[\tilde S_0]$ of $\mathcal {ES}^{\mathcal X_0}_{[S_0],\mathfrak Z_A,\mathfrak Z_B}$ corresponds to a surface $\tilde S_0=\tilde S_A\cup\tilde S_B$, with 
$\tilde S_A$ and $\tilde S_B$ intersecting transversally along a smooth curve $\tilde C\subset R$ and having, respectively, $\delta_A$ and $\delta_B$ nodes as singularities.
In particular $\mathcal {ES}^{\mathcal X_0}_{[S_0],\mathfrak Z_A,\mathfrak Z_B}$ is a locally closed set in an irreducible component $\tilde{\mathbb T}$
of $\mathbb T_{\delta_A,\delta_B,0}$, of which $[S_0]$ is a smooth point.

Now the equisingular deformation locus $\mathcal {ES}^{\mathcal X_0}_{[S_0]}$ of $S_0$ in $\mathcal X_0$
is the intersection of the loci $ \mathcal {ES}^{\mathcal X_0}_{[S_0],p_{R,1}, \ldots, p_{R,\delta_R}}$ and $ \mathcal {ES}^{\mathcal X_0}_{[S_0],\mathfrak Z_A,\mathfrak Z_B}$. Hence $\mathcal {ES}^{\mathcal X_0}_{[S_0]}$ has codimension at most $\delta$ in $\mathcal H^{\mathcal X_0}_{S_0}$, because $[S_0]$ is a smooth point of $\mathcal H^{\mathcal X_0}_{S_0}$.

On the other hand, one has

\begin{eqnarray*}
T_{[S_0]}(\mathcal {ES}^{\mathcal X_0}_{[S_0]}) & = & T_{[S_0]}(\mathcal {ES}^{\mathcal X_0}_{[S_0],p_{R,1}, \ldots, p_{R,\delta_R}})\cap 
T_{[S_0]}(\mathcal {ES}^{\mathcal X_0}_{[S_0],\mathfrak Z_A,\mathfrak Z_B})\\
& = & H^0(S_0, \mathcal N_{S_0|\mathcal X_0}\otimes \mathcal I_{\{p_{R,1}, \ldots, p_{R,\delta_R}\}|\mathcal X_0})
\cap H^0(S_0, \mathcal N_{S_0|\mathcal X_0}\otimes  \mathcal I_{\mathfrak Z_A\cup \mathfrak Z_B|\mathcal X_0})\\
& = & H^0(S_0, \mathcal N_{S_0|\mathcal X_0}\otimes  \mathcal I_{\mathfrak Z|\mathcal X_0}).
\end{eqnarray*}

By \eqref{uno-nodi}, we have 
$$
h^0(S_0, \mathcal N_{S_0|\mathcal X_0}\otimes  \mathcal I_{\mathfrak Z|\mathcal X_0})=h^0(S_0, \mathcal N_{S_0|\mathcal X_0})-\delta
$$
this proves that $\mathcal {ES}^{\mathcal X_0}_{[S_0]}$ is smooth at $[S_0]$ of codimension exactly $\delta$ in $\mathcal H^{\mathcal X_0}_{S_0}$, as wanted. This proves the first part of the corollary. 

The second part is proved with analogous arguments as the ones used in the proof of Corollary \ref {cor:ind}. \end{proof}

\begin{proposition}\label{prop:defonodi} Let $S_0=S_A\cup S_B$ be a reduced effective Cartier divisor as in the statement of Corollary \ref {cor:defo}. 
Assume that $[S_0]$ belongs to an irreducible component $\mathcal H$ of $\mathcal H^{\mathcal X|\mathbb D}$ that dominates $\mathbb D$ and that \eqref {uno-nodi} holds.

Let $\mathcal {ES}^\mathcal X_{[S_0],\mathfrak Z_A,\mathfrak Z_B}$ be the locus in $\mathcal H^\mathcal X$ of deformations of $S_0$ which are equisingular at every node of $S_0$. Then $\mathcal {ES}^\mathcal X_{[S_0],\mathfrak Z_A,\mathfrak Z_B}$ is generically smooth of codimension $\delta_A+\delta_B$ in $\mathcal H$ and it contains $\mathcal {ES}^{\mathcal X_0}_{[S_0],\mathfrak Z_A,\mathfrak Z_B}$ as a subscheme of codimension 1. 

In simple words, $[S_0]$ can be deformed out of $\mathcal X_0$ preserving the $\delta_A+\delta_B$ nodes. 

\end{proposition}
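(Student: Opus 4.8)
The plan is to compute, near $[S_0]$, matching lower and upper bounds for the dimension of $\mathcal{ES}^{\mathcal X}_{[S_0],\mathfrak Z_A,\mathfrak Z_B}$, both equal to $\dim\mathcal H-(\delta_A+\delta_B)$, and at the same time to see that the differential at $[S_0]$ of the projection $\mathcal{ES}^{\mathcal X}_{[S_0],\mathfrak Z_A,\mathfrak Z_B}\to\mathbb D$ is surjective; the latter is exactly what forces $\mathcal{ES}^{\mathcal X_0}_{[S_0],\mathfrak Z_A,\mathfrak Z_B}$ to sit inside $\mathcal{ES}^{\mathcal X}_{[S_0],\mathfrak Z_A,\mathfrak Z_B}$ with codimension $1$, rather than being all of it. For the bookkeeping I would first record: by \eqref{uno-nodi}, $h^1(S_0,\mathcal N_{S_0|\mathcal X_0})=0$, so by Lemma \ref{lm:speranza-fallita} and Corollary \ref{cor:klop} (this uses the hypothesis that $\mathcal H$ dominates $\mathbb D$) $[S_0]$ is a smooth point of $\mathcal H^{\mathcal X|\mathbb D}$ lying on the single component $\mathcal H$, $T_{[S_0]}\mathcal H=H^0(S_0,\mathcal N_{S_0|\mathcal X})$, and $\dim\mathcal H=h^0(S_0,\mathcal N_{S_0|\mathcal X})=h^0(S_0,\mathcal N_{S_0|\mathcal X_0})+1$; again by \eqref{uno-nodi}, since $\mathfrak Z_A\cup\mathfrak Z_B$ is a subscheme of $\mathfrak Z$, one has $h^1(S_0,\mathcal N_{S_0|\mathcal X_0}\otimes\mathcal I_{\mathfrak Z_A\cup\mathfrak Z_B|\mathcal X_0})=0$ and $h^0(S_0,\mathcal N_{S_0|\mathcal X_0}\otimes\mathcal I_{\mathfrak Z_A\cup\mathfrak Z_B|\mathcal X_0})=h^0(S_0,\mathcal N_{S_0|\mathcal X_0})-(\delta_A+\delta_B)$.

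For the upper bound, let $\mathcal N''_{S_0|\mathcal X}\subseteq\mathcal N_{S_0|\mathcal X}$ be the kernel of the composite $\mathcal N_{S_0|\mathcal X}\to T^1_{S_0}\to\bigoplus_{p\in\mathfrak Z_A\cup\mathfrak Z_B}T^1_{S_0,p}$, so that $H^0(S_0,\mathcal N''_{S_0|\mathcal X})$ is the space of first order deformations of $S_0$ in $\mathcal X$ equisingular at every node and $T_{[S_0]}\mathcal{ES}^{\mathcal X}_{[S_0],\mathfrak Z_A,\mathfrak Z_B}\subseteq H^0(S_0,\mathcal N''_{S_0|\mathcal X})$. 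Each node lies off $R$, hence in the smooth locus of $\mathcal X_0$, and a node of a surface in a smooth threefold has its versal deformation already realized in that threefold, so $\mathcal N_{S_0|\mathcal X_0}\to\bigoplus_p T^1_{S_0,p}$ is a surjection of sheaves; combining this with \eqref{eq:normali} and the snake lemma gives
\[
0\to\mathcal N_{S_0|\mathcal X_0}\otimes\mathcal I_{\mathfrak Z_A\cup\mathfrak Z_B|\mathcal X_0}\to\mathcal N''_{S_0|\mathcal X}\to\mathcal O_{S_0}\to 0.
\]
Taking cohomology, using $h^1(\mathcal N_{S_0|\mathcal X_0}\otimes\mathcal I_{\mathfrak Z_A\cup\mathfrak Z_B|\mathcal X_0})=0$ and $h^0(\mathcal O_{S_0})=1$ ($S_0$ connected), this yields $h^0(\mathcal N''_{S_0|\mathcal X})=h^0(\mathcal N_{S_0|\mathcal X_0}\otimes\mathcal I_{\mathfrak Z_A\cup\mathfrak Z_B|\mathcal X_0})+1=\dim\mathcal H-(\delta_A+\delta_B)$, and the surjection $H^0(\mathcal N''_{S_0|\mathcal X})\to H^0(\mathcal O_{S_0})=H^0(\mathcal N_{\mathcal X_0|\mathcal X}|_{S_0})$ is (a trivialization of) the differential at $[S_0]$ of $\mathcal{ES}^{\mathcal X}_{[S_0],\mathfrak Z_A,\mathfrak Z_B}\to\mathbb D$.

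For the lower bound: for $[S']$ near $[S_0]$ in $\mathcal H$ and each node $p_i$ of $S_0$, since $p_i$ is a nondegenerate critical point, the local equation $f_i$ of $S'$ near $p_i$ has a unique critical point $q_i=q_i([S'])$ close to $p_i$, depending analytically on $[S']$ by the implicit function theorem; near $[S_0]$ the locus $\mathcal{ES}^{\mathcal X}_{[S_0],\mathfrak Z_A,\mathfrak Z_B}$ is then $\{[S']:f_i(q_i([S']))=0,\ i=1,\dots,\delta_A+\delta_B\}$, i.e.\ the zero locus of $\delta_A+\delta_B$ analytic functions on $\mathcal H$, which is smooth at $[S_0]$, so $\dim_{[S_0]}\mathcal{ES}^{\mathcal X}_{[S_0],\mathfrak Z_A,\mathfrak Z_B}\geq\dim\mathcal H-(\delta_A+\delta_B)$. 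Combined with the tangent bound this forces equality and smoothness of $\mathcal{ES}^{\mathcal X}_{[S_0],\mathfrak Z_A,\mathfrak Z_B}$ at $[S_0]$, of codimension $\delta_A+\delta_B$ in $\mathcal H$, so in particular it is generically smooth of that codimension. Since it is smooth at $[S_0]$ and its projection to $\mathbb D$ is submersive there, its scheme-theoretic fibre over $0$ is smooth of codimension $1$ in it, and that fibre is exactly $\mathcal{ES}^{\mathcal X_0}_{[S_0],\mathfrak Z_A,\mathfrak Z_B}$ (a deformation of $S_0$ in $\mathcal X$ equisingular at the nodes that is contained in $\mathcal X_0$ is just such a deformation in $\mathcal X_0$); this proves the last assertion and shows that $S_0$ deforms out of $\mathcal X_0$ keeping its $\delta_A+\delta_B$ nodes.

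The only point where geometry rather than cohomological bookkeeping is essential is the $\mathcal O_{S_0}$ on the right of the snake-lemma sequence: it must genuinely be \emph{realized} by equisingular-at-nodes deformations, i.e.\ the ``push $S_0$ off $\mathcal X_0$'' first order direction must carry the nodes along. This works precisely because the nodes lie in the smooth part of $\mathcal X_0$, where pushing $S_0$ into a neighbouring fibre is, locally, a translation of a nodal surface; it is what prevents $\mathcal{ES}^{\mathcal X}_{[S_0],\mathfrak Z_A,\mathfrak Z_B}$ from collapsing into the central fibre, and is recorded by the ``extra $1$'' in the codimension count.
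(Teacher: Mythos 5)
Your proof is correct and follows essentially the same route as the paper: the paper packages your discriminant functions $f_i(q_i([S']))$ as the versal morphism $f\colon U\to\prod_{p\in\mathfrak Z_A+\mathfrak Z_B}\Delta_p$ and proves it has maximal rank at $[S_0]$ because the kernel of its differential restricted to $H^0(S_0,\mathcal N_{S_0|\mathcal X_0})$ is $H^0(S_0,\mathcal N_{S_0|\mathcal X_0}\otimes\mathcal I_{\mathfrak Z_A\cup\mathfrak Z_B|\mathcal X_0})$, of codimension $\delta_A+\delta_B$ by \eqref{uno-nodi} --- exactly the content of your snake-lemma computation of $h^0(\mathcal N''_{S_0|\mathcal X})$ combined with your implicit-function-theorem lower bound. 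The codimension-one statement is likewise obtained in both arguments from the fact that the central-fibre directions already realize the full rank, so the remaining direction transverse to $\mathcal X_0$ survives in the equisingular locus.
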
 

\begin{proof} From the hypotheses, it follows that $H^1(S_0,\mathcal N_{S_0|\mathcal X_0})=0$. Then, by Corollary \ref {cor:klop}, $\mathcal H^{\mathcal X|\mathbb D}$  is smooth at $[S_0]$ with dimension  $h^0(S_0,\mathcal N_{S_0|\mathcal X_0})+1=\dim(\mathcal H_{S_0}^{\mathcal X_0})+1$. By standard deformation theory, there is an analytic neighborhood $U$ of $[S_0]$ in $\mathcal H$ and a versal morphism
$$
f: U\longrightarrow \prod_{p\in \mathfrak Z_A+\mathfrak Z_B} \Delta_p
$$
where $\Delta_p$ is the versal deformation space of a node, and therefore it has dimension 1. Let $U'=U\cap \mathcal H_{S_0}^{\mathcal X_0}$. Then $f$ restricts to
$$
g: U'\longrightarrow \prod_{p\in \mathfrak Z_A+\mathfrak Z_B} \Delta_p
$$
The  differential of $g$ at $[S_0]$ is
$$
H^0(S_0, N_{S_0|\mathcal X_0})\to \prod_{p\in  \mathfrak Z_A+\mathfrak Z_B}  T_{S_0,p}^1\cong \mathbb C^{\delta_A+\delta_B}
$$
and this map is surjective since its kernel is $H^0(S_0, \mathcal N_{S_0|\mathcal X_0}\otimes  \mathcal I_{\mathfrak Z_A\cup \mathfrak Z_B|\mathcal X_0})$, which has codimension $\delta_A+\delta_B$ in $H^0(S_0, N_{S_0|\mathcal X_0})$ by \eqref{uno-nodi}. Hence $g$ has maximal rank at $[S_0]$ and therefore also $f$ is of maximal rank at $[S_0]$. Hence $f^{-1}(0)$ and $g^{-1}(0)$ are  analytic subvarieties of $U$ and $U'$ respectively, smooth at $[S_0]$ and of codimension $\delta_A+\delta_B$ in $U$ and $U'$ respectively. By versality, $f^{-1}(0)$ (resp. $g^{-1}(0)$) coincides with $\mathcal {ES}^{\mathcal X}_{[S_0],\mathfrak Z_A,\mathfrak Z_B}$ (resp. $\mathcal {ES}^{\mathcal X_0}_{[S_0],\mathfrak Z_A,\mathfrak Z_B}$). The statement follows. \end{proof}

\subsubsection{Global deformations of surfaces with $T_1$ singularities to nodal surfaces}\label{sect:smoothing-to-nodes}

In this section we will assume the following set up. We have the family $\pi: \mathcal X\to \mathbb D$ as usual 
with its relative Hilbert scheme $\mathcal H^{\mathcal X|\mathbb D}$, whose fibre over $t\in\mathbb D$ is the Hilbert scheme 
of $\mathcal H^{\mathcal X_t}$ of $\mathcal X_t$.

Let $\mathcal V_\delta^{\mathcal X|\mathbb D}$ be the Zariski closure  in $\mathcal H^{\mathcal X|\mathbb D}$ 
of the relative Severi variety 
$\mathcal W^{\mathcal X\setminus\mathcal X_0|\mathbb D\setminus 0}_\delta\subset\mathcal H^{\mathcal X\setminus\mathcal X_0|\mathbb D\setminus 0}$
of $\delta$-nodal surfaces. 
 We want to provide sufficient conditions for  $\mathcal V_\delta^{\mathcal X|\mathbb D}$ to be non-empty.


We will  suppose that we have a line bundle $\mathcal L$ on $\mathcal X$ with the following properties:\\
\begin{inparaenum}
 \item [(1)]  $h^0(\mathcal X_t, \mathcal L_t)$ is a constant $r+1$ in $t$ and greater or equal than 4. In particular every surface $S_0$ in $ |\mathcal L_0|$ belongs to an irreducible component $\mathcal H$ of the relative Hilbert scheme $\mathcal H^{\mathcal X|\mathbb D}$ that dominates $\mathbb D$;\\
 \item [(2)] $|\mathcal L_0|$ is base point free, so that we can assume that $|\mathcal L_t|$ is base point free for all $t\in \mathbb D$;\\  
\item [(3)]  if $p_t\in \mathcal X_t$ is a general point, then the general surface in $|\mathcal L_t|$ with a singular point at $p_t$ is singular only at finitely many points, for the general $t\in \mathbb D$.
\end{inparaenum}

In this setting we can consider the rank $r$ projective bundle $\bar \pi: \mathbb P(\pi_*(\mathcal L))\to \mathbb D$. A point in $\mathbb P:=\mathbb P(\pi_*(\mathcal L))$ that maps to $t\in \mathbb D$ is a non--zero section of $H^0(\mathcal X_t, \mathcal L_t)$ up to a constant. In particular, if $t\neq 0$, a point in $\mathbb P$ corresponds to a surface in $|\mathcal L_t|$. Consider the open Zariski subset $\mathbb P':=\bar \pi^{-1}(\mathbb D\setminus \{0\})$, which, by the above considerations, can be regarded as a subvariety of the relative Hilbert scheme of surfaces in $\mathcal X$.  By a standard parameter count, one sees that there is a subscheme $Z$ of pure codimension 1 in $\mathbb P'$ whose points correspond to sections vanishing along singular surfaces. We will denote by $\bar Z$ the closure of $Z$ in $\mathbb P$, that has also codimension 1.

\begin{proposition}\label{lm:delta=1} Set up as above with the following further condition:  the subspace of sections of $H^0(\mathcal X_0,\mathcal L_0)$ that vanish on $R=A\cap B$, with $A$ and $B$ the irreducible components of $\mathcal X_0$, has codimension strictly larger than 1 in $H^0(\mathcal X_0,\mathcal L_0)$. 
 
Let  $S_0=S_A\cup S_B\subset \mathcal X_0$  be a surface corresponding to a section of $\mathcal L_0$. We suppose that:\\
\begin{inparaenum} 
\item [(a)] 
 $S_A$ and $S_B$ are smooth and  intersect transversally along a curve $C=S_A\cap S_B$, except
for a point $p=p_1\in C\subset R$ where $S_0$ has a singularity of type $T_1$ and the hypotheses of Lemma \ref {lm:upper-bound} hold for $\delta=1$; \\
\item [(b)] the sublinear system $\mathcal L_0(2,p)$ of $|\mathcal L_0|$ of surfaces with at least a $T_1$ singularity at $p$ has codimension 3 in $|\mathcal L_0|$;\\
\item [(c)]  \eqref{uno} (for $\delta=1$) holds for $S_0$. 
\end{inparaenum}
 
Then:\\
\begin{inparaenum}
\item [(i)]  $\mathbb T_1$ is smooth at $[S_0]$ of codimension $1$ in $\mathcal H_0$;\\ 
\item [(ii)] $S_0$ can be deformed to a $1$-nodal surface $S_t\subset\mathcal X_t$;\\
\item [(iii)] if $\mathbb T\subseteq \mathbb T_1$ is the unique irreducible component containing $[S_0]$, then there exists a reduced, irreducible component $\mathcal V\subset\mathcal V^{\mathcal X|\mathbb D}_1$ of dimension $\dim(\mathcal H)-1$ whose central fibre $\mathcal V_0$ contains $\mathbb T$ as an irreducible component.\\
\end{inparaenum}
\end{proposition}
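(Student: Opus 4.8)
The plan is to treat the three assertions in increasing order of difficulty, deriving (i) at once from the machinery already built, observing that (iii) implies (ii), and then spending almost all the effort on (iii).

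\medskip

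\noindent\textbf{Part (i) and preliminary observations.} For $\delta=1$, hypothesis (a) supplies exactly the hypotheses of Lemma \ref{lm:upper-bound} and hypothesis (c) is \eqref{uno}; hence Corollary \ref{cor:tangent-space-tacnode} applies and gives that $\mathbb T_1$ is smooth at $[S_0]$ of dimension $h^0(S_0,\mathcal N_{S_0|\mathcal X_0})-1=\dim(\mathcal H_0)-1$, so the unique component $\mathbb T$ of $\mathbb T_1$ through $[S_0]$ is smooth of codimension $1$ in $\mathcal H_0$. Since (c) forces $H^1(S_0,\mathcal N_{S_0|\mathcal X_0})=0$ through the exact sequence \eqref{eq:standard-sequence1}, Corollary \ref{cor:klop} shows in addition that $[S_0]$ is a smooth point of $\mathcal H^{\mathcal X|\mathbb D}$, that $\mathcal H$ is the unique component through it, and that $\dim(\mathcal H)=\dim(\mathcal H_0)+1$; near $[S_0]$ one has $\mathcal H_0\cong|\mathcal L_0|$, of dimension $r$. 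Finally, because $[S_0]\in\mathbb T$ and $\mathbb T$ lies in the central fibre of any component of $\mathcal V^{\mathcal X|\mathbb D}_1$ that contains it, statement (iii) implies statement (ii). So the real content is (iii), and the heart of (iii) is the inclusion $\mathbb T\subseteq\mathcal V^{\mathcal X|\mathbb D}_1$.

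\medskip

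\noindent\textbf{The main step: $\mathbb T\subseteq\mathcal V^{\mathcal X|\mathbb D}_1$.} The idea is to run the analysis of \S\ref{ssec:surf} backwards, globalizing the local model of \S\ref{ssec:loc}. Since the conditions in (a), (b), (c) (and those of Lemma \ref{lm:upper-bound}) are open, it suffices to show that a \emph{general} $[S_0']\in\mathbb T$ lies in $\mathcal V^{\mathcal X|\mathbb D}_1$; write $S_0'=S_A'\cup S_B'$, let $p'\in R$ be its $T_1$ point (a general point of $R$) and $C'=S_A'\cap R$ the corresponding nodal curve. I would then perform the degree-$2$ base change $\pi\colon\mathcal Y\to\mathcal X$ of \S\ref{ssec:surf}, choosing the bisection $\gamma'$ through $p'$ so that its lift $\gamma$ meets $\mathcal Y_0$ at the point $q\in F\cong\mathbb P^1$ singled out by the nodal structure of $C'$ at $p'$ (the blow-up computation in \S\ref{ssec:surf} determines which point of $F$ is compatible). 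On $\mathcal Y_0=A\cup\mathcal E\cup B$ one reconstructs the effective Cartier divisor $\mathcal S_0'=\mathcal S_A'\cup\mathcal S_{\mathcal E}'\cup\mathcal S_B'$, with $\mathcal S_A',\mathcal S_B'$ the proper transforms of $S_A',S_B'$ (each tangent to $\mathcal E$ along a curve with a node over $p'$, since $C'$ is nodal at $p'$) and $\mathcal S_{\mathcal E}'$ the $\mathbb P^1$-bundle over $C'$ inside $\mathcal E$; this is well defined because $S_A'|_R=S_B'|_R=C'$, and it lies in the class $\pi^*(\mathcal L)|_{\mathcal Y_0}$. The next step is to lift $\mathcal S_0'$ to a divisor $\mathcal S'\subset\mathcal Y$ in $|\pi^*(\mathcal L)|$ with $\mathcal S'|_{\mathcal Y_0}=\mathcal S_0'$, which — as in Corollary \ref{cor:klop} — follows once $H^1(\mathcal S_0',\mathcal N_{\mathcal S_0'|\mathcal Y_0})=0$. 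One then invokes \S\ref{ssec:surf} in the forward direction: the shape of $\mathcal S_0'$ forces $\mathcal S'$ to have multiplicity $2$ along $\gamma$, so that $S':=\pi(\mathcal S')$ satisfies $S'|_{\mathcal X_0}=2S_0'$ while, for $t\neq0$, $S'|_{\mathcal X_t}=S'_{u_1}\cup S'_{u_2}$ with $S'_{u_i}=\pi(\mathcal S'_{u_i})\in|\mathcal L_t|$ carrying the isolated double point $\gamma\cap\mathcal Y_{u_i}$ — a node for the general such $\mathcal S'$, or a posteriori by Lemma \ref{lm:solo-nodo}. Undoing the base change exhibits $S_0'$ as the flat limit of the $1$-nodal surfaces $S'_{u_i}$, so $[S_0']\in\mathcal V^{\mathcal X|\mathbb D}_1$; letting $[S_0']$ vary and taking closure yields $\mathbb T\subseteq\mathcal V^{\mathcal X|\mathbb D}_1$.

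\medskip

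\noindent\textbf{The main obstacle.} The delicate point is precisely the lifting step together with the choice of $q$: one must check that the point $q\in F$ dictated by the node of $C'$ at $p'$ is the one making the blow-up analysis of \S\ref{ssec:surf} produce exactly $\mathcal S_0'$, and one must establish the vanishing $H^1(\mathcal S_0',\mathcal N_{\mathcal S_0'|\mathcal Y_0})=0$. For the latter I would identify $\mathcal N_{\mathcal S_0'|\mathcal Y_0}$ with $\pi^*(\mathcal L)$ restricted to the three components of $\mathcal S_0'$, compare with the corresponding sheaves downstairs, and deduce the vanishing from hypothesis (c) (which gives $H^1(S_0',\mathcal N_{S_0'|\mathcal X_0})=0$) together with the curve vanishing $h^1(\mathcal N_{C'|R}\otimes\mathcal I_{p'|R})=0$ from (a), which controls the contribution of the $\mathbb P^1$-bundle component $\mathcal S_{\mathcal E}'$; hypothesis (b) enters to guarantee that the family of divisors with multiplicity $2$ along $\gamma$ has the expected dimension, so that no obstruction appears. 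This is the step I expect to be the hardest and most computational.

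\medskip

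\noindent\textbf{From $\mathbb T\subseteq\mathcal V^{\mathcal X|\mathbb D}_1$ to statement (iii).} Over $\mathbb D\setminus\{0\}$ the $1$-nodal locus is Zariski-dense in the discriminant $\bar Z\subset\mathbb P$, which is of pure codimension $1$; hence $\mathcal V^{\mathcal X|\mathbb D}_1$ has pure dimension $r=\dim(\mathcal H)-1$ and each of its components dominates $\mathbb D$ (no component can lie in a single $\mathbb P_t$, since by (2) and Bertini the general surface of $|\mathcal L_t|$ is smooth). Therefore the central fibre $\mathcal V^{\mathcal X|\mathbb D}_1\cap\mathcal H_0$ is pure of dimension $r-1$: every component has dimension $\ge r-1$ by dominance over $\mathbb D$, and it is a \emph{proper} closed subset of $\mathcal H_0$ — by Proposition \ref{lm:transverse} the general surface $S_A''\cup S_B''$ of $|\mathcal L_0|$, being everywhere transverse, is a limit only of smooth surfaces, and the extra hypothesis that the sections vanishing on $R$ form a subspace of codimension $>1$ prevents the degenerate ``$R\subseteq S_0$'' limits from providing a codimension-$1$ component. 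Thus $\mathbb T$, irreducible of dimension $r-1$ and contained in $\mathcal V^{\mathcal X|\mathbb D}_1\cap\mathcal H_0$, is an irreducible component of $\mathcal V^{(j)}\cap\mathcal H_0$ for some irreducible component $\mathcal V:=\mathcal V^{(j)}$ of $\mathcal V^{\mathcal X|\mathbb D}_1$; this $\mathcal V$ has dimension $r=\dim(\mathcal H)-1$ and central fibre $\mathcal V_0$ having $\mathbb T$ among its components. Reducedness of $\mathcal V$ follows because its general point is a $1$-nodal surface in some $\mathcal X_t$ at which the Severi variety is smooth: this uses base-point freeness (2) and condition (3) to make the node impose an independent condition, and the vanishing of $h^1$ of the equisingular normal sheaf, which holds near $t=0$ by semicontinuity out of (c). This establishes (iii), and hence (ii).
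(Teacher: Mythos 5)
Your part (i) matches the paper (Corollary \ref{cor:tangent-space-tacnode} plus Corollary \ref{cor:klop}), and your closing paragraph on purity of the discriminant and on reducedness of $\mathcal V$ is essentially the paper's argument for (iii). The problem is the main step, where you try to prove $\mathbb T\subseteq \mathcal V^{\mathcal X|\mathbb D}_1$ by directly lifting a general $[S_0']\in\mathbb T$: you reconstruct $\mathcal S_0'\subset\mathcal Y_0$, extend it to $\mathcal S'\subset\mathcal Y$ using an $H^1$-vanishing, and then assert that ``the shape of $\mathcal S_0'$ forces $\mathcal S'$ to have multiplicity $2$ along $\gamma$.'' That assertion is false. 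An extension of $\mathcal S_0'$ off the central fibre is in no way constrained to contain $\gamma$, let alone to be double along it: precisely because the deformations are unobstructed, the \emph{generic} extension smooths the $T_1$ singularity to a smooth surface in $\mathcal X_t$ (this is consistent with Proposition \ref{lm:transverse} and Lemma \ref{lm:solo-nodo}, which say only that \emph{if} a singularity survives it is a node, not that one must survive). What you need is the existence of an extension lying in the codimension-one locus of divisors double along $\gamma$, and your proposed fix --- computing $H^1(\mathcal S_0',\mathcal N_{\mathcal S_0'|\mathcal Y_0})$ and comparing normal sheaves --- does not produce such an extension; it only controls unconstrained deformations.

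The paper circumvents this by never lifting a prescribed surface. It works with the discriminant $\bar Z\subset\mathbb P(\pi_*(\mathcal L))$, which has pure codimension $1$, so its closure meets the central fibre $|\mathcal L_0|$ in codimension $1$; the hypothesis that sections vanishing on $R$ have codimension $>1$ rules out the degenerate components, and Proposition \ref{lm:transverse} forces every component of $\bar Z_0$ to consist of surfaces cutting singular curves on $R$. The bisection/base-change construction (your Step with $\gamma$, which is the content of Claim \ref{cl:crux} and does use Theorem \ref{thm-node} and assumption (3)) is used only to show that \emph{some} component $\bar Z_0'$ of $\bar Z_0$ dominates $R$ via the singular point of the cut curve --- not that it passes through the given $[S_0']$. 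The identification of $\bar Z_0'$ with the locus containing $[S_0]$ is then forced by Lemma \ref{lem:dom}: the incidence variety $I$ has a \emph{unique} component $I'$ dominating $R$, hypothesis (b) places $(S_0,p)$ on $I'$, and the component coming from $\bar Z_0'$ also dominates $R$, hence coincides with $I'$; therefore $\bar Z_0'=\pi_1(I')\ni[S_0]$, and Lemma \ref{lm:solo-nodo} upgrades ``limit of singular surfaces'' to ``limit of $1$-nodal surfaces.'' This component-counting argument is the idea missing from your proposal; without it (or an actual construction of a divisor on $\mathcal Y$ that is double along $\gamma$ and restricts to $\mathcal S_0'$, which is what the dimension count replaces), the main step does not go through.
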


Before giving the proof of the proposition, we make a preliminary lemma. For this we need some notation. Let $I\subset |\mathcal L_0|\times R$, with $R=A\cap B$, be the locally closed subset consisting of pairs $(S_0,p)$ such that $S_0$ cuts out on $R$ a curve singular at $p$. We will consider the two projections $\pi_1: I\to |\mathcal L_0|$ and $\pi_2: I\to R$. Note that if $p\in R$, then $\pi_2^{-1}(p)$ can be identified with $\mathcal L_0(2,p)$. 

\begin{lemma}\label{lem:dom} (i) There is at most one irreducible component $I'$ of $I$ such that the restriction of $\pi_2$ to $I'$ is dominant to $R$ via $\pi_2$.

(ii)  If $I'$ exists, and if its general element $(S_0,p)$ is such that $S_0$ cuts out on $R$ a curve with finitely many singular points,  then $\mathcal L_0(2,p)$ has codimension 3 in $|\mathcal L_0|$. Moreover $\dim(I')= \dim(|\mathcal L_0|)-1$, and its image in $|\mathcal L_0|$ via $\pi_1$ has codimension 1 in $|\mathcal L_0|$.

(iii) If there is a pair $(S_0,p)$ in $I$ such that $S_0$ cuts out on $R$ a curve with finitely many singular points and if $ \mathcal L_0(2,p)$ has codimension 3 in $|\mathcal L_0|$, then $(S_0,p)$ belongs to an irreducible component $I'$ of $I$ dominating $R$ via $\pi_2$. For this component one has $\dim(I')= \dim(|\mathcal L_0|)-1$, and its image in $|\mathcal L_0|$ via $\pi_1$ has codimension 1 in $|\mathcal L_0|$. 
\end{lemma}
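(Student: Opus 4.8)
The plan is to reformulate the incidence variety $I$ via a morphism of sheaves on $R$ and then to run a dimension count. Write $V:=H^0(\mathcal X_0,\mathcal L_0)$, so $\dim V=r+1\geq 4$, and let $\mathcal J:=\mathcal P^1_R(\mathcal L_0|_R)$ be the sheaf of first principal parts of $\mathcal L_0|_R$, a locally free sheaf of rank $3$ on the surface $R$. Restriction to $R$ followed by taking the $1$-jet gives a morphism $\epsilon\colon V\otimes_{\mathbb C}\mathcal O_R\to\mathcal J$, whose fibre $\epsilon_p$ at $p\in R$ satisfies $\mathbb P(\ker\epsilon_p)=\mathcal L_0(2,p)$ and $\ker\epsilon_p\supseteq V_R:=H^0(\mathcal X_0,\mathcal L_0\otimes\mathcal I_{R|\mathcal X_0})$. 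Set $\rho:=\max_{p\in R}\operatorname{rk}\epsilon_p\leq 3$ and let $R^\circ\subseteq R$ be the dense open (hence irreducible) locus where this maximum is attained. Over $R^\circ$ the sheaf $\ker\epsilon$ is a subbundle of rank $r+1-\rho$, so $\mathbb K:=\mathbb P(\ker(\epsilon|_{R^\circ}))$ is a smooth irreducible $\mathbb P^{r-\rho}$-bundle over $R^\circ$, of dimension $r-\rho+2$; moreover $I$ is an open subset of the closed incidence $\{([s],p):\epsilon_p(s)=0\}$, obtained by removing the closed locus $\mathbb P(V_R)\times R=\{S_0\supseteq R\}$, and $I\cap(|\mathcal L_0|\times R^\circ)=\mathbb K\setminus(\mathbb P(V_R)\times R^\circ)$.

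For part (i): any irreducible component $I'$ of $I$ dominating $R$ has a general point $([S_0],p)$ with $p\in R^\circ$; near such a point $I$ is an open subset of the smooth variety $\mathbb K$, hence smooth, so a unique component of $I$ passes through $([S_0],p)$. Since $([S_0],p)$ also lies in the closure $\mathbb I_0$ of $\mathbb K\setminus(\mathbb P(V_R)\times R^\circ)$, which (when nonempty) is an irreducible component of $\overline I$ dominating $R$, we conclude $I'=\mathbb I_0$; thus $I$ has at most one dominant component. (Here I use that $R$ is irreducible; if it is reducible, an irreducible $I'$ cannot surject onto $R$, and the statement is vacuous — as it is also if $\mathbb I_0=\varnothing$.)

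For parts (ii) and (iii): under the stated hypotheses the dominant component $I'$ coincides with $\mathbb I_0$, so $\dim I'=r-\rho+2$. Next, $\pi_1|_{I'}\colon I'\to|\mathcal L_0|$ is generically finite: its general fibre over $[S_0]$ lies in the finite set of singular points of $S_0\cap R$ (this is the running hypothesis in (ii); in (iii) it follows from the given pair by upper semicontinuity of the fibre dimension of $\pi_1|_{I'}$). Hence $\dim\pi_1(I')=\dim I'=r-\rho+2$. On the other hand $\pi_1(I')$ is contained in the locus of $S_0\in|\mathcal L_0|$ with $S_0\cap R$ singular, which is a \emph{proper} closed subset of $|\mathcal L_0|$ by Bertini — the linear system traced on $R$ by $|\mathcal L_0|$ is base point free since $|\mathcal L_0|$ is, so its general member is smooth — whence $\dim\pi_1(I')\leq r-1$. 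Comparing, $r-\rho+2\leq r-1$, i.e. $\rho\geq 3$; as $\rho\leq 3$ always, $\rho=3$, which is exactly the statement that $\mathcal L_0(2,p)$ has codimension $3$ in $|\mathcal L_0|$. In case (iii) one first observes that codimension $3$ at the given $p$ forces $\operatorname{rk}\epsilon_p=3=\rho$, hence $p\in R^\circ$ and $([S_0],p)\in\mathbb K$, which identifies the component of $I$ through it with the dominant one. Finally $\dim I'=\dim\pi_1(I')=r-1$, so $\pi_1(I')$ has codimension $1$ in $|\mathcal L_0|$.

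The step I expect to be the main obstacle is controlling the degeneration locus $\mathbb P(V_R)\times R=\{S_0\supseteq R\}$ inside the incidence variety: a priori it could swallow the bundle $\mathbb K$ — i.e.\ $\ker(\epsilon|_{R^\circ})$ could equal the constant subsheaf $V_R\otimes\mathcal O_{R^\circ}$ — and thereby destroy the only candidate dominant component. Ruling this out is precisely the role of the hypothesis that the sections of $\mathcal L_0$ vanishing on $R$ form a subspace of codimension $>1$ in $V$ (in the situations of (ii) and (iii) the hypotheses in fact force this codimension to be $>3$, since there $\dim\ker\epsilon_p=r-2$ for a point $p$ with $S_0\not\supseteq R$). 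Everything else reduces to the bundle-rank bookkeeping and the elementary dimension inequalities above.
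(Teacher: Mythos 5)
Your proof is correct and follows essentially the same route as the paper's: the principal-parts bundle $\mathcal P^1_R(\mathcal L_0|_R)$ is just a structured way of recording that $\mathcal L_0(2,p)$ is a linear subspace of codimension at most $3$ in $|\mathcal L_0|$ whose dimension is constant over a dense open subset of $R$, which is exactly the paper's starting point (Remark \ref{condizioni-tacnodo}). The uniqueness argument in (i), the count $\dim(I')=s+2\geq \dim(|\mathcal L_0|)-1$ combined with generic finiteness of $\pi_1$ and Bertini in (ii), and the semicontinuity argument in (iii) all match the paper's proof.
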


\begin{proof} (i) Let $I'$ be an irreducible component of $I$ such that $I'$ is dominant to $R$ via $\pi_2$. If $p\in R$ is a general point we know that $\pi_2^{-1}(p)$ can be identified with $\mathcal L_0(2,p)$, and  $\mathcal L_0(2,p)$ is a projective space with dimension $s$ 
 independent on the general point $p$. This clearly implies that $I'$ is unique. 

(ii) Suppose the dominating component $I'$ exists. With the same notation as above we have  $\dim(I')=\dim(R)+s=s+2$.  On the other hand,
 by Remark \ref{condizioni-tacnodo},   $s\geq \dim(|\mathcal L_0|)-3$,  hence
$\dim(I')\geq \dim(|\mathcal L_0|)-1$. By the hypotheses, the map $\pi_1$, restricted to $I'$,  is generically finite onto the image and this image cannot be dense in $|\mathcal L_0|$ by Bertini's theorem. Hence $\dim(I')\leq \dim(|\mathcal L_0|)-1$, so the equality holds, and this implies that  $s=\dim(|\mathcal L_0|)-3$, as wanted.

(iii) Keep the same notation as above. The dimension of the fibre of $\pi_2$ over a general point of $R$ is $r\geq \dim(|\mathcal L_0|)-3= \mathcal L_0(2,p)\geq 0$.
Moreover there is an open dense subset $U$ of $R$, containing $p$,  such that for all $q\in U$ one has that $\mathcal L_0(2,p)$ has dimension 
$ s  = \dim(|\mathcal L_0|)-3$. Hence there is a component $I'$ of $I$ dominating $R$ via $\pi_2$.   \end{proof}

We can now give the:

\begin{proof}[Proof of Proposition \ref {lm:delta=1}]  We notice that by Corollary \ref {cor:klop}, $[S_0]$ is a smooth point for $\mathcal H^{\mathcal X|\mathbb D}$.
Part (i) follows by Corollary \ref {cor:tangent-space-tacnode}.

Let us prove part (ii). For this we go back to the notation introduced before the statement of Proposition \ref {lm:delta=1}. Consider then the intersection $\bar Z_0$ of $\bar Z$ with $\bar \pi^{-1}(0)\cong |\mathcal L_0|$, such that any of its irreducible components has codimension 1 in $|\mathcal L_0|$. By the hypotheses we made, if $\bar Z'_0$ is any irreducible component of $\bar Z_0$, its general element does not contain $R$, hence it is a surface $S_0'\in |\mathcal L_0|$ that intersects $R$ along a curve $C'$. By Proposition \ref {lm:transverse}, the curve $C'$ is singular. 

\begin{claim}\label{cl:crux} There is an irreducible component $\bar Z'_0$ of $\bar Z_0$, such that for $S_0'\in \bar Z'_0$ general, $S_0'$  intersects $R$ in a curve $C'$ that is singular at a general point $p'$ of $R$.  Moreover, $S_0'$ is limit of reduced singular surfaces $S_t\in|\mathcal L_t|$.
\end{claim}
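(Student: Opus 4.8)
The plan is to show that the required component $\bar Z_0'$ of $\bar Z_0$ is the closure $\bar W$ of the locus of surfaces in $|\mathcal L_0|$ cutting out a singular curve on $R$. To identify it, observe that the pair $(S_0,p)$ satisfies the hypotheses of Lemma \ref{lem:dom}(iii): by (a) the curve $C=S_0\cap R$ has a node at $p$ and $S_A,S_B$ meet transversally along $C$ elsewhere, so $C$ has finitely many singular points, and by (b) the linear subsystem $\mathcal L_0(2,p)$ has codimension $3$ in $|\mathcal L_0|$. Hence there is an irreducible component $I'$ of $I$ through $(S_0,p)$, dominating $R$ via $\pi_2$, with $\dim I'=\dim|\mathcal L_0|-1$, so $\bar W:=\overline{\pi_1(I')}$ has codimension $1$ in $|\mathcal L_0|$. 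For a general $S_0'\in\bar W$ the curve $C'=S_0'\cap R$ is singular at a general point $p'$ of $R$ (as $\pi_2|_{I'}$ is dominant); since smoothness of $S_A',S_B'$ is an open condition satisfied by $S_0$, and the $\delta$--invariant of $C'$ cannot exceed that of $C$, the general $S_0'\in\bar W$ has a single singularity, of type $T_1$, at $p'$. Thus $\bar W$ is the unique component $\mathbb T$ of $\mathbb T_1$ through $[S_0]$ (both are irreducible of codimension $1$ — for $\mathbb T$ by Corollaries \ref{cor:tangent-space-tacnode} and \ref{cor:klop}, using (a) and (c) — and $\mathbb T\subseteq\bar W$ by Lemma \ref{lm:upper-bound} and Lemma \ref{lem:dom}(i)), and it is distinct from the ``trivial'' components of $\bar Z_0$ on which $S_A'$ or $S_B'$ is singular off $R$.

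It then remains to prove $\bar W\subseteq\bar Z_0$. Since $\bar Z_0$ has pure codimension $1$ and $\bar W$ is irreducible of codimension $1$, this exhibits $\bar W=\bar Z_0'$ as a component of $\bar Z_0$, and then the last assertion of the Claim is automatic: $\bar Z$ is the closure of the relative discriminant $Z$, whose general point on each component corresponds to a reduced — indeed irreducible, because $|\mathcal L_t|$ is base point free — singular surface in some $\mathcal X_t$ with $t\neq0$. As $\bar W$ is irreducible and $\bar Z_0$ is closed, it suffices to produce, for $p'$ ranging over a dense open subset of $R$ and $S_0'$ over a dense subset of $\mathcal L_0(2,p')$, a one-parameter family of reduced singular surfaces $S_t\in|\mathcal L_t|$ ($t$ in a cut disc) with $S_t\to S_0'$ as $t\to0$.

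I would construct such a family by globalizing the local deformation of \S\ref{ssec:loc}. Fix a general $p'\in R$, so that $\mathcal L_0(2,p')$ has codimension exactly $3$ — this holds on a dense open subset of $R$ because $\mathcal L_0(2,q)$ is a linear subspace of $|\mathcal L_0|$ of dimension at least $\dim|\mathcal L_0|-3$ (Remark \ref{condizioni-tacnodo}), upper semicontinuous in $q$ and equal to $\dim|\mathcal L_0|-3$ at $q=p$ by (b) — and a general $S_0'\in\mathcal L_0(2,p')$ with defining section $s_0'\in H^0(\mathcal X_0,\mathcal L_0)$. Constancy of $h^0(\mathcal X_t,\mathcal L_t)$ makes $\pi_*\mathcal L$ locally free, so $s_0'$ extends to a section $s'\in H^0(\mathcal X,\mathcal L)$; base point freeness of $|\mathcal L_0|$ gives $\eta\in H^0(\mathcal X,\mathcal L)$ with $\eta(p')\neq0$. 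In the local coordinates at $p'$ of \S\ref{ssec:loc}, where $\mathcal X:xy=t$ and $S_0'$ has the form \eqref{eq:tacnode}, set $S_t:=\{(s'-\alpha(t)\eta)|_{\mathcal X_t}=0\}$; by the computation of \S\ref{ssec:loc}, after a suitable normalization of $\eta$ there is a holomorphic function $\alpha(t)$ on a cut disc, with $\alpha(0)=0$, such that $S_t$ has, for $0<|t|$ small, exactly one singular point near $p'$, which is a node by Lemma \ref{lm:solo-nodo}. By Proposition \ref{lm:transverse}, $S_t$ is smooth at all points near $C'\setminus\{p'\}$, and smooth elsewhere because $S_0'$ is; hence $S_t$ is reduced, one-nodal, and $S_t\to S_0'$ as $t\to0$, so $[S_0']\in\bar Z_0$. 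Letting $p'$ and $S_0'$ vary fills a dense subset of $\bar W$, giving $\bar W\subseteq\bar Z_0$, as required.

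The step I expect to be the main obstacle is the last one: converting the purely local deformation of \S\ref{ssec:loc} into a global family — equivalently, producing the effective divisor $\mathcal S$ with a double point along the bisection in the sense of \S\ref{sect: getting triple point} — together with the verification, via Lemmas \ref{lm:solo-nodo} and \ref{lm:transverse}, that no extra singularities are created. This is exactly where assumptions (a), (b), (c) of Proposition \ref{lm:delta=1} and the hypothesis on the codimension of the sections of $\mathcal L_0$ vanishing on $R$ are really used, the underlying dimension count on $\mathcal X_0$ being the content of Lemma \ref{lem:dom}.
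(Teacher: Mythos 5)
Your strategy inverts the paper's. The paper proves the Claim in the ``forward'' direction: hypothesis (3) of \S\ref{sect:smoothing-to-nodes} guarantees surfaces in $|\mathcal L_t|$ singular at a general point, so for a general $p'\in R$ one gets a divisor $\mathcal S\subset\mathcal Y$ with double points along a section over a bisection through $p'$, and Theorem \ref{thm-node} then identifies the limit as a surface cutting $R$ in a curve singular at $p'$; reducedness and singularity of the nearby fibres come for free from this construction. You argue ``backwards'': you first identify the candidate component $\bar W=\overline{\pi_1(I')}$ via Lemma \ref{lem:dom}(iii) (this part is fine, though it largely reproduces what the paper does \emph{after} the Claim), and then try to prove $\bar W\subseteq\bar Z_0$ by deforming a general $S_0'\in\mathcal L_0(2,p')$ off the central fibre along the pencil $s'-\alpha\eta$.

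The gap is exactly the step you flag as the main obstacle, and the reference to \S\ref{ssec:loc} does not close it. That subsection computes one explicit model ($x-y-\alpha-z^2-u^2=0$, perturbed by the constant $\alpha$); it does not show that for an \emph{arbitrary} $T_1$ singularity in the normal form \eqref{eq:tacnode}, an \emph{arbitrary} extension $s'$ of its equation and an \emph{arbitrary} $\eta$ with $\eta(p')\neq 0$, there is $\alpha(t)\to 0$ making $S_t$ singular near $p'$ for $t\neq 0$. Equivalently, one must show that the critical locus of the two-parameter family over the $(\alpha,t)$-plane has a branch through the origin not contained in $\{t=0\}$; a priori the local discriminant germ could be supported on $\{t=0\}$, and ruling this out is the whole content of the step. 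It can in fact be done --- the locus $F=\partial_zF=\partial_uF=x\,\partial_xF-y\,\partial_yF=0$ in $(x,y,z,u,\alpha)$ is a smooth curve at the origin because the relevant Jacobian minor is the discriminant of the node of $f_2(0,0,z,u)$, and its tangent vector has $v_x=v_y\neq 0$, so $t=xy$ vanishes to order exactly two along it, yielding $\alpha(t)$ on a cut disc --- but no such argument appears in your proposal, so the inclusion $\bar W\subseteq\bar Z_0$ is unproved as written. Note also that the hypothesis actually powering this step in the paper is assumption (3) of \S\ref{sect:smoothing-to-nodes} (which your existence argument never invokes) rather than (a)--(c) of Proposition \ref{lm:delta=1}, so your closing diagnosis of ``where the hypotheses are really used'' is off the mark.
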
 

\begin{proof} [Proof of the Claim \ref {cl:crux}] This will be a consequence of the following  fact that we are going to prove: given a general point $p'\in R$, there is some $S_0'\in \bar Z_0$ such that the curve $C'$ cut out by $S_0'$ on $R$ is singular at $p'$. Indeed, given $p'\in R$ general, take a smooth bisection $\gamma'$ of $\mathcal X\to \mathbb D$, that passes through $p'$. As in \S \ref {ssec:surf}, we can consider the family $\mathcal Y\to \mathbb D$ obtained by desingularising the variety $\mathcal X'\to \mathbb D$ gotten via 2--fold base change $\nu_2: \mathbb D\to \mathbb D$. The variety $\mathcal Y\to \mathbb D$ has a section $\gamma$ that is mapped to $\gamma'$ via the map $\pi: \mathcal Y\to \mathcal X$. We consider $\pi^*(\mathcal L)$. Our assumption (3) implies that there are non--zero sections of $\pi^*(\mathcal L)$, on $\mathcal Y\setminus \mathcal Y_0$, vanishing with multiplicity at least 2 along $\gamma$. The assertion is now a consequence of Theorem \ref {thm-node}. \end{proof}

By the hypothesis (b) and by Lemma \ref {lem:dom}(iii), the pair $(S_0,p)$ belongs to the unique irreducible component $I'$ of $I$ dominating $R$ via $\pi_2$,  and $I'$ has dimension equal to $\dim(|\mathcal L_0|)-1$. Consider now the subset $I''\subseteq I$ of the pairs $(S_0',p')$ with $S_0'\in  \bar Z'_0$, where $\bar Z'_0$ is as in Claim \ref {cl:crux}. We notice that $I''$ also dominates $R$ via $\pi_2$. So by Lemma \ref {lem:dom}(i), $I''$ coincides with $I'$. This implies that $\bar Z'_0=\pi_1(I')$ hence $S_0\in \bar Z'_0$ and therefore the general surface $S'_0\in \bar Z'_0$ has a unique $T_1$ singularity. By Lemma \ref {lm:solo-nodo} the assertion (ii) follows. 

To prove (iii) we remark first of all that (ii) implies that $\mathcal V^{\mathcal X|\mathbb D}_1$ is non--empty and there is an irreducible component $\mathcal V$ of $\mathcal V^{\mathcal X|\mathbb D}_1$ that dominates $\mathbb D$ and contains $[S_0]$. The general point in  $\mathcal V$ corresponds to a surface $S_t$ with $t\neq 0$, with a unique node at a  general  point $p_t\in \mathcal X_t$. Moreover,  since $[S_0]$ is a smooth point of $\mathcal H$, we have that  $[S_t]$ is a smooth point of $\mathcal H$, and by the hypothesis (c) and by semicontinuity, we have that $h^1(S_t, N_{S_t|\mathcal X_t}\otimes I_{p_t})=0$. This yield that $\mathcal V\cap \mathcal H_t$ is smooth of dimension $\dim(\mathcal H_t)-1$. Hence $\mathcal V$ has dimension $\dim(\mathcal H)-1$. To prove that $\mathcal V$ is reduced, it suffices to prove that $\mathcal V$ is smooth at $[S_t]$. To see this, consider the exact sequence
$$
0\to N'_{S_t|\mathcal X}\to N_{S_t|\mathcal X}\to T^1_{S_t}\to 0
$$
where $T^1_{S_t}$ is supported on $p_t$ with stalk  $\mathbb C$, and $H^0(S_t,N'_{S_t|\mathcal X})$ is the Zariski tangent space to $\mathcal V$  at $[S_t]$. The map 
$$
H^0(S_t,N_{S_t|\mathcal X})\to T^1_{S_t}=\mathbb C
$$
is surjective because $S_t$ is smoothable inside $\mathcal H$,  by the hypothesis (2) at the beginning of this section.  Hence
$h^0(S_t,N'_{S_t|\mathcal X})=h^0(S_t,N_{S_t|\mathcal X})-1=\dim(\mathcal H)-1$, as wanted. \end{proof}

We can now prove the main result of this section extending Proposition \ref {lm:delta=1} to the case $\delta>1$:

\begin{theorem}\label{thm:main-theorem} Set up as in Proposition \ref  {lm:delta=1}. In particular we have the following  condition: the subspace of sections of $H^0(\mathcal X_0,\mathcal L_0)$ that vanish on $R=A\cap B$ with $A$ and $B$ the irreducible components of $\mathcal X_0$, has codimension strictly larger than 1 in $H^0(\mathcal X_0,\mathcal L_0)$. 
 
Let  $S_0=S_A\cup S_B\subset \mathcal X_0$  be a surface corresponding to a section of $\mathcal L_0$. We suppose that:\\
\begin{inparaenum} 
\item [(a)] 
 $S_A$ and $S_B$ have respectively $\delta_A$ and $\delta_B$ nodes $p_{A,1},\ldots, p_{A,\delta_A}$ and $p_{B,1},\ldots, p_{B,\delta_B}$ off $R$,  are elsewhere
 smooth and  intersect transversally along a curve $C=S_A\cap S_B$, except
for $\delta_R$ distinct points $p_{R,1}, \ldots, p_{R,\delta_R} \in C\subset R$ where $S_0$ has singularities of type $T_1$ and that the hypotheses of Lemma \ref {lm:upper-bound} hold;\\ 
\item [(b)] the sublinear system $\mathcal L_0(2,p_i)$ of $|\mathcal L_0|$ of surfaces with at least a $T_1$ singularity at $p_i$ has codimension 3 in $|\mathcal L_0|$, for every $1\leq i\leq \delta$;\\
\item [(c)] if $\mathfrak Z$ is the $0$--dimensional scheme of length $\delta=\delta_A+\delta_B+\delta_R$ given by
$$
\mathfrak Z= \sum_{i=1}^{\delta_A} p_{A,i}+\sum_{i=1}^{\delta_B} p_{B,i}+\sum_{i=1}^{\delta_R} p_{R,i}
$$
then $H^1(S_0, \mathcal N_{S_0|\mathcal X_0}\otimes  I_{\mathfrak Z|\mathcal X_0})=0$ (where $I_{\mathfrak Z|\mathcal X_0}$ is the ideal sheaf of the scheme $\mathfrak Z$ in $\mathcal X_0$).
\end{inparaenum}

Then:\\
\begin{inparaenum}
\item [(i)] $S$ can be deformed to a $\delta$-nodal surface $S_t\subset\mathcal X_t$;\\
\item [(ii)] if $\mathbb T\subseteq \mathbb T_{\delta_A,\delta_B,\delta_R}$ is the unique irreducible component containing $[S]$, then there exists an irreducible component $\mathcal V\subset\mathcal V^{\mathcal X|\mathbb D}_\delta$ of dimension $\dim(\mathcal H)-\delta$ whose central fibre $\mathcal V_0$ contains $\mathbb T$ as an irreducible component.\\
\end{inparaenum}
 \end{theorem}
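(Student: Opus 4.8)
The plan is to collect first the consequences of the hypotheses, then to reduce part (ii) to part (i) — the existence of a single general $\delta$-nodal surface $S_t\subset\mathcal X_t$ lying in the component $\mathcal H$ — and finally to prove part (i) by induction on $\delta_R$, combining Proposition \ref{prop:defonodi} with the $\bar Z$-construction in the proof of Proposition \ref{lm:delta=1}. Hypothesis (c), via the sequence \eqref{eq:standard-sequence1}, gives $H^1(S_0,\mathcal N_{S_0|\mathcal X_0})=0$ and $h^0(S_0,\mathcal N_{S_0|\mathcal X_0}\otimes I_{\mathfrak Z|\mathcal X_0})=h^0(S_0,\mathcal N_{S_0|\mathcal X_0})-\delta$; hence, by Corollary \ref{cor:klop}, $[S_0]$ is a smooth point of $\mathcal H$ with $\dim\mathcal H=h^0(S_0,\mathcal N_{S_0|\mathcal X_0})+1$. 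Moreover (a) contains the hypotheses of Lemma \ref{lm:upper-bound}, and (a) and (c) together yield \eqref{uno-nodi} (for the term $h^1(C,\mathcal N_{C|R}\otimes\mathcal I_{\{p_{R,1},\dots,p_{R,\delta_R}\}|R})$ one uses the hypothesis of Lemma \ref{lm:upper-bound}), so Corollary \ref{cor:defo} applies: the unique component $\mathbb T\subseteq\mathbb T_{\delta_A,\delta_B,\delta_R}$ through $[S_0]$ is smooth at $[S_0]$ of codimension $\delta$ in $\mathcal H_0$, so $\dim\mathbb T=\dim\mathcal H-1-\delta$, and the $\delta$ singularities of $S_0$ can be smoothed independently inside $\mathcal X_0$. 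Read on the general member $S_0'$ of $\mathbb T$, this says that the $\delta_A$ nodes on $A$, the $\delta_B$ nodes on $B$ and the $\delta_R$ points of type $T_1$ sit at mutually independent general positions, the last at general points of $R$.

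Granting part (i), part (ii) follows exactly as Proposition \ref{lm:delta=1}(iii): the $\delta$ nodes of a general $S_t$ produced by (i) specialize, as $t\to 0$, to the $\delta$ singular points of a general $S_0'\in\mathbb T$ — nodes to nodes and $T_1$ points to nodes by Lemma \ref{lm:solo-nodo} — so by semicontinuity and (c), $h^1(S_t,\mathcal N_{S_t|\mathcal X_t}\otimes I_{\Delta_t|\mathcal X_t})=0$, where $\Delta_t$ is the scheme of the $\delta$ nodes of $S_t$. Hence the component $\mathcal V\subseteq\mathcal V^{\mathcal X|\mathbb D}_\delta$ through $[S_t]$ meets $\mathcal H^{\mathcal X_t}$ in a smooth locus of dimension $\dim\mathcal H^{\mathcal X_t}-\delta$, so $\dim\mathcal V=\dim\mathcal H-\delta$; the sequence $0\to\mathcal N'_{S_t|\mathcal X}\to\mathcal N_{S_t|\mathcal X}\to T^1_{S_t}\to 0$, together with the smoothability of $S_t$ in $|\mathcal L_t|$ (hypothesis (2)), shows $\mathcal V$ is reduced at $[S_t]$. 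Since $S_0'$ ranges over a dense subset of $\mathbb T$ inside the closed set $\mathcal V_0$, we get $\mathbb T\subseteq\mathcal V_0$, and $\dim\mathcal V_0=\dim\mathcal V-1=\dim\mathbb T$ forces $\mathbb T$ to be an irreducible component of $\mathcal V_0$.

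For part (i) I would induct on $\delta_R$. When $\delta_R=0$ the surface $S_0$ has only the $\delta_A+\delta_B$ nodes off $R$ and $S_A,S_B$ meet transversally along a curve, so Proposition \ref{prop:defonodi}, applied to a general member of $\mathbb T$, yields (after taking closures) an irreducible component $\mathcal V^{(0)}\subseteq\mathcal V^{\mathcal X|\mathbb D}_{\delta_A+\delta_B}$ of dimension $\dim\mathcal H-(\delta_A+\delta_B)$, dominating $\mathbb D$, with $\mathbb T\subseteq\mathcal V^{(0)}_0$ and general member over $t\neq 0$ a $(\delta_A+\delta_B)$-nodal surface. For the passage from $j$ to $j+1$ I would re-run the $\bar Z$-construction of the proof of Proposition \ref{lm:delta=1}, but relative to the family $\mathcal V^{(j)}$ (which dominates $\mathbb D$) in place of $\mathbb P(\pi_*\mathcal L)$: isolate inside $\mathcal V^{(j)}$ the codimension-one locus of surfaces acquiring one further singular point at a general point of $R$; use hypothesis (b), the relative form of Lemma \ref{lem:dom} and Corollary \ref{cor:tangent-space-tacnode} to identify it, near $[S_0]$, with a branch of $\mathbb T_{\delta_A,\delta_B,j+1}$ through $[S_0]$ on the central fibre; and use Lemma \ref{lm:solo-nodo} to see the new singular point becomes a node off $\mathcal X_0$. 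This produces $\mathcal V^{(j+1)}\subseteq\mathcal V^{\mathcal X|\mathbb D}_{\delta_A+\delta_B+j+1}$ with the analogous properties; after $\delta_R$ steps $\mathcal V:=\mathcal V^{(\delta_R)}$ has a $\delta$-nodal general member over $t\neq 0$, which is part (i).

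The main obstacle is the inductive step: carrying out the $\bar Z$-construction of Proposition \ref{lm:delta=1} relative to the non-linear family $\mathcal V^{(j)}$ rather than inside a linear system. One must re-derive, in this relative setting, the statement of Lemma \ref{lem:dom} (uniqueness and dimension of the component of the incidence variety dominating $R$) and of Claim \ref{cl:crux}, and check — via Corollary \ref{cor:defo} and Remark \ref{rem:gen} — that the equisingular tangent spaces met at each stage have exactly the expected dimension, so that every locus built is smooth of the expected codimension and lies in a single irreducible component. The device that makes this work is the genericity and mutual independence, from Corollary \ref{cor:defo}, of the positions of the $\delta_R$ points of type $T_1$: it is what allows the $\delta_R$ local degenerations of a $T_1$ point to a node described in \S\ref{ssec:loc}, performed near distinct general points of $R$, to be compatible and thus assembled into one global deformation of $S_0$, compatibly with the node-preserving deformation off $\mathcal X_0$ supplied by Proposition \ref{prop:defonodi}.
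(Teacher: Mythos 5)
Your reduction of part (ii) to part (i) and your preliminary bookkeeping (smoothness of $[S_0]$ in $\mathcal H$ via Corollary \ref{cor:klop}, applicability of Corollary \ref{cor:defo}, the dimension $\dim\mathbb T=\dim\mathcal H-1-\delta$) all match the paper and are fine. The problem is part (i). Your induction on $\delta_R$ hinges on re-running the $\bar Z$-construction of Proposition \ref{lm:delta=1} \emph{relative to the non-linear family} $\mathcal V^{(j)}$, and you yourself flag this as ``the main obstacle'' without resolving it. This is a genuine gap, not a detail: Lemma \ref{lem:dom} rests on the fact that the fibre $\pi_2^{-1}(p)\cong\mathcal L_0(2,p)$ is a \emph{projective subspace of constant dimension} for $p\in R$ general (whence uniqueness of the dominating component and the codimension count via Bertini), and Claim \ref{cl:crux} rests on hypothesis (3) for the full linear system $|\mathcal L_t|$ together with Theorem \ref{thm-node}. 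Over $\mathcal V^{(j)}$ the fibres of the analogous incidence variety are no longer linear systems, the constancy of their dimension and the uniqueness of the dominating component are not automatic, and one would also need an analogue of hypothesis (3) for the restricted family. None of this is supplied, so the inductive step does not close.

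The paper avoids the induction entirely. It never applies the $\bar Z$-machinery to a family of surfaces that are already singular: for each $p\in\mathfrak Z_R$ it applies Proposition \ref{lm:delta=1} to the \emph{general} element of $\mathcal{ES}_{[S_0],p}^{\mathcal X_0}$ (which has a single $T_1$ singularity, so hypothesis (b) of Proposition \ref{lm:delta=1} applies verbatim inside the full linear system), obtaining an analytic branch $\mathcal T_p$ of $\mathcal V_1^{\mathcal X|\mathbb D}$ of codimension $1$ in $\mathcal H$; for each node $p\in\mathfrak Z_A+\mathfrak Z_B$ it uses Proposition \ref{prop:defonodi} to get the codimension-$1$ branch $\mathcal{ES}_{[S_0],p}^{\mathcal X}$. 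It then intersects all $\delta$ of these branches: the intersection $\mathcal T'$ has codimension at most $\delta$ in $\mathcal H$, yet contains the branch $\mathcal T=\bigcap_{p\in\mathfrak Z}\mathcal{ES}_{[S_0],p}^{\mathcal X_0}$ of codimension $\delta+1$ (by Corollary \ref{cor:defo}); hence $\mathcal T'\supsetneq\mathcal T$, its general element lies off $\mathcal X_0$ with at least $\delta$ singularities, and Lemma \ref{lm:solo-nodo} shows these are exactly $\delta$ nodes. If you want to salvage your write-up, replace the induction by this simultaneous intersection argument; your treatment of part (ii) can then stand essentially as written.
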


\begin{proof}  Again, by Corollary \ref {cor:klop}, $[S_0]$ is a smooth point for $\mathcal H^{\mathcal X|\mathbb D}$. 
 Moreover, by Corollary \ref{cor:defo}, $\mathbb T_{\delta_A,\delta_B,\delta_R}$ is smooth at $[S_0]$. 

We denote by $\mathbb T$ the unique irreducible component  of $\mathbb T_{\delta_A,\delta_B,\delta_R}$  containing $[S_0]$, that is smooth at $[S_0]$. 

 Again  by Corollary \ref{cor:defo}, in an analytic neighborhood of $[S]$, 
 $\mathbb T$ consists of an analytic branch $\mathcal T$ that is the transverse intersection of $\delta$ smooth analytic branches of dimension $h^0(S_0, \mathcal N_{S_0|\mathcal X_0})-1$, each branch corresponding to the locus of deformations of $S_0$ that are equisingular at a given point in $\mathfrak Z$, i.e.,  we have that   
 $$
 \mathcal T = \bigcap _{p\in \mathfrak Z} \mathcal {ES}_{[S_0],p}^{\mathcal X_0}.
 $$ 
 The general element $S'_0$ of $\mathcal {ES}_{[S_0],p}^{\mathcal X_0}$ is a surface in $\mathcal X_0$ that has a unique singularity analytically equivalent to the singularity of $S_0$ at $p$, i.e., a node if $p\in \mathfrak Z_A+\mathfrak Z_B$, a $T_1$ singularity otherwise. 
 
 By Proposition \ref {prop:defonodi}, for every $p\in \mathfrak Z_A+\mathfrak Z_B$,  $\mathcal {ES}_{[S_0],p}^{\mathcal X_0}$ is contained in $\mathcal {ES}_{[S_0],p}^{\mathcal X}$ as a subvariety of codimension 1, and $\mathcal {ES}_{[S_0],p}^{\mathcal X}$ is an analytic branch of the Severi variety $\mathcal V^{\mathcal X|\mathbb D}_1$. 
 
 If $p\in \mathfrak Z_R$, then by the hypothesis (b), for general element $S'_0$ of $\mathcal {ES}_{[S_0],p}^{\mathcal X_0}$ the condition (b) of Proposition \ref{lm:delta=1} holds. Then, by Proposition \ref{lm:delta=1}, $\mathcal {ES}_{[S_0],p}^{\mathcal X_0}$ is contained, as a subvariety of codimension 1, in an analytic branch
$\mathcal T_p$ of $\mathcal V_1^{\mathcal X|\mathbb D}$ having codimension $1$ in $\mathcal H$, which is smooth at the general point correponding to a $1$-nodal surface. 
 
  Now, the intersection
$$
\mathcal T'  = \bigcap _{p\in \mathfrak Z_R} \mathcal T_{p} \cap \bigcap _{p\in \mathfrak Z_A+\mathfrak Z_B} \mathcal {ES}_{[S_0],p}^{\mathcal X}
$$
has codimension at most $\delta$ in $\mathcal H$ and it contains the smooth analytic branch $\mathcal T$ of $\mathbb T$, which has codimension $\delta+1$ in $\mathcal H$.
The general element of 
$
 \mathcal T' 
$
corresponds to a surface $\tilde S$, not contained in $\mathcal X_0$, with at least $\delta$ singularities, precisely $\delta_A$ (resp. $\delta_B$) singularities in neighbohoods of the nodes $p\in \mathfrak Z_A$ (resp. $p\in \mathfrak Z_B$) and $\delta_R$ singularities in neighbohoods of the $T_1$ singularities  $p\in \mathfrak Z_R$.  Taking into account  Lemma \ref{lm:solo-nodo}, we deduce that 
$\tilde S$ has $\delta$ nodes and no further singularities. This proves (i). 

If $ \tilde S\subset\mathcal X_t$ and it has nodes at $\tilde q_1,...,\tilde q_\delta$, by semicontinuity, we have that
$H^0(\tilde S,\mathcal N_{\tilde S|\mathcal X_t}\otimes I_{\{\tilde q_1,...,\tilde q_\delta\}|\mathcal X_t})$ has dimension $\dim(\mathcal H_t)-\delta$. Thus
$
 \mathcal T'  
$ is an analytic branch containing the point $[\tilde S]$ in an irreducible component $\mathcal V\subset\mathcal V^{\mathcal X|\mathbb D}_\delta$ of dimension $\dim(\mathcal H)-\delta$ whose central fibre $\mathcal V_0$ contains $\mathbb T$ as an irreducible component. This proves (ii). 
\end{proof}


\section{Applications}\label{sec:appl}

\subsection{Severi varieties} Let $X$ be a smooth irreducible projective complex threefold. Let $L$ be a  line bundle on $X$
such that the general surface in the linear system $| L|$ is smooth and irreducible.
We denote by $V^{X, |L|}_\delta$ the  {\it Severi variety}, that is the locally closed  subscheme in $|L|$ parametrizing surfaces $S$ in $|L|$ which are reduced and with 
only $\delta$ nodes as singularities. If $[S]\in  V^{X, |L|}_\delta$, then the Zariski tangent space to 
$V^{X, |L|}_\delta$ at $[S]$ coincides with 
$$
T_{[S]}(V^{X, |L|}_\delta)\simeq H^0(S, \mathcal O_S(L)\otimes I_{N|S}),
$$
where $N$ is the reduced scheme of nodes of $S$. In particular, 
$
\dim(V^{X, |L|}_\delta)\leq h^0(S, \mathcal O_S(L)\otimes I_{N|S}).
$
Moreover, by standard deformation theory, $H^1(S, \mathcal O_S(L)\otimes I_{N|S})$ is an obstruction space for $\mathcal O_ {V^{X, |L|}_\delta,[S]}$
and thus 
$$
h^0(S, \mathcal O_S(L)\otimes I_{N|S})-h^1(S, \mathcal O_S(L)\otimes I_{N|S})
\leq \dim(V^{X, |L|}_\delta)\leq h^0(S, \mathcal O_S(L)\otimes I_{N|S}).
$$
If $h^1(S, \mathcal O_S(L)\otimes I_{N|S})=0$, then $V^{X, |L|}_\delta$ is smooth at $[S]$ of dimension 
$$
h^0(S, \mathcal O_S(L)\otimes I_{N|S})=\dim(|L|)-\delta.
$$
In this case one says that $[S]$ is a {\it regular} point of the $\dim(V^{X, |L|}_\delta)$. An irreducible component $V$ of 
$\dim(V^{X, |L|}_\delta)$ is said to be {\it regular} if it is regular at its general point. 

\begin{remark}\label{rem:reg} Suppose $V$ is a regular irreducible component of 
$\dim(V^{X, |L|}_\delta)$. By standard deformation theory already used in Section \ref {deformations},  the nodes of the surface corresponding to any smooth point in $V$ can be independently smoothed. This implies that there are regular components of $\dim(V^{X, |L|}_{\delta'})$ for any $\delta'<\delta$.
\end{remark}

One can consider the following two questions.
\begin{problem}\label{main}
Given $X$ and $L$ as above, which is the maximal value of $\delta$ such that 
the Severi variety $V^{X, |L|}_\delta$ is non empty?
\end{problem}

\begin{problem}\label{regular}
Given $X$ and $L$ as above, which is the maximal value of $\delta$ such that 
the Severi variety $V^{X, |L|}_\delta$ has a regular component?
\end{problem}
 
 As for Problem \ref{main}, this is a classical and difficult question, for which there are several contributions, too many to be quoted here, probably the most efficient one is  given by the Miyaoka's bound \cite [Formulae  (2) and  (8)] {Mi}. In particular the problem has been completely solved for  $X=\mathbb P^3$ and $L=\mathcal O_{\mathbb P^3}(d)$
with $d\leq 6$ (see, e.g.,  \cite {L} and references therein). 
 However, in this section we will not consider  Problem \ref{main} but we will give some contribution to Problem \ref{regular}.

\begin{remark} 
 One could be tempted to believe that the maximal $\delta$ for which the Severi variety is non-empty is bounded above
 by the dimension of $|L|$. This is  not true. In fact there are classical examples,  for $X=\mathbb P^3$ and $L=\mathcal O_{\mathbb P^3}(d)$
 for suitable $d$, for which $V^{X, |L|}_\delta$ is non-empty and $\delta$ is greater than the dimension of $|L|$ (cf. \cite{beauville}, \cite{segre}). 
  In these cases every component of the Severi variety is not regular.
 \end{remark}
 
 \begin{remark} \label{rm:massimo}
 Referring to Problem \ref{regular}, it is rather natural to conjecture that  the $\delta$ which answer the question  should be  bounded below by 
 $\delta_0=[{\dim(|L|)\over 4 }]$. The reason for such a conjecture is the following: choose $p_1,\ldots,p_{\delta_0}$ general points on $X$.
 Since a double point imposes at most 4 conditions to $|L|$, certainly there are surfaces which are singular at every $p_i$. If the general such surface
 has only nodes at $p_1,\ldots ,p_{\delta_0}$ and no other singularities then it belongs to a regular component of the Severi variety. However this heuristic argument is very difficult to be made rigorous in general.
 \end{remark}

 \subsection{The case of $\mathbb P^3$} In this section we give a contribution to Problem \ref {regular}, in the case $X=\mathbb P^3$ and $L=\mathcal O_{\mathbb P^3}(d)$. More precisely we will prove the following:
 
 \begin{theorem}\label{thm:reg} There is an irreducible, regular component of 
 $V^{\mathbb P^3, |\mathcal O_{\mathbb P^3}(d)|}_{\delta}$, for any $\delta \leq {{d-1}\choose 2}$.
 \end{theorem}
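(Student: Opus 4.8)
The plan is to apply Theorem~\ref{thm:main-theorem} to a suitable degeneration of $\mathbb P^3$, choosing the initial surface $S_0 = S_A \cup S_B$ to carry exactly $\delta_R = \binom{d-1}{2}$ singularities of type $T_1$ on $R$ and no other singularities. First I would set up the family $\mathcal X \to \mathbb D$: take $\mathcal X_0 = A \cup B$ with, say, $A \cong \mathbb P^3$ and $B$ the blow-up of $\mathbb P^3$ along a line (or more simply $A = B = \mathbb P^3$ glued along a plane $R \cong \mathbb P^2$), obtained as a degeneration of $\mathbb P^3$ inside a pencil; one checks $\mathcal X$ is smooth by choosing the pencil generically. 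The line bundle $\mathcal L$ is the one restricting to $\mathcal O_{\mathbb P^3}(d)$ on the general fibre, and $\mathcal L_0$ restricts to $\mathcal O(d)$ on each of $A,B$ with $R = A \cap B$ a plane, so that $\mathcal L_0|_R = \mathcal O_{\mathbb P^2}(d)$. Condition (1) holds since $h^0(\mathbb P^3,\mathcal O(d)) = \binom{d+3}{3} \geq 4$ for $d \geq 1$; (2) and (3) are clear for $\mathcal O(d)$; and the subspace of sections vanishing on $R$ has codimension $h^0(\mathbb P^2,\mathcal O(d)) = \binom{d+2}{2} > 1$, so the standing hypothesis of the theorem is met.

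Next I would construct $S_0$. On $R \cong \mathbb P^2$ choose a plane curve $C$ of degree $d$ with exactly $\delta = \binom{d-1}{2}$ nodes and no further singularities — this is the maximum for an irreducible nodal plane curve of degree $d$, and such curves exist and fill a component of the Severi variety $V_\delta^{\mathbb P^2,\mathcal O(d)}$ which is smooth of codimension $\delta$ at $[C]$ (classical Severi theory: $h^1(C,\mathcal N_{C|R}\otimes \mathcal I_{\{p_1,\dots,p_\delta\}|R}) = h^1(\mathbb P^2,\mathcal O(d)\otimes \mathcal I_{\mathrm{nodes}}) = 0$, since the $\binom{d-1}{2}$ nodes of a nodal plane curve impose independent conditions on $|\mathcal O(d)|$ — this last is the classical fact underlying Severi's proof, e.g. via the Cayley–Bacharach/$h^1$ vanishing that the excerpt's Lemma~\ref{lm:upper-bound} takes as hypothesis). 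Then I take $S_A \subset A \cong \mathbb P^3$ to be a general surface of degree $d$ through $C$ and tangent to $R$ at each $p_i$ in the direction prescribed by (\ref{eq:tacnode}), and similarly $S_B \subset B$; for $S_A$, $S_B$ general with these constraints they are smooth and cut out $C$ on $R$, so $S_0 = S_A \cup S_B$ has a $T_1$ singularity at each $p_i$ and no other singularities. Here $\delta_A = \delta_B = 0$ and $\delta_R = \delta$.

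I then have to verify the hypotheses (a), (b), (c) of Theorem~\ref{thm:main-theorem} for this $S_0$. Hypothesis (b): $\mathcal L_0(2,p_i)$ has codimension $3$ in $|\mathcal L_0|$ — since a $T_1$ point imposes exactly $3$ conditions (tangency of $S_A$ to $R$ and the node condition being already built into $C$), and by Remark~\ref{condizioni-tacnodo} the codimension is $\le 3$, one needs the reverse inequality, which follows because $R$ does not sit in $|\mathcal L_0|$-base-locus and a generic tangency is a genuinely codimension-$3$ condition; alternatively invoke Lemma~\ref{lem:dom}(iii) once one exhibits a single such pair. Hypothesis (a) requires the dominance of $\varphi_{|\mathcal H^{\mathcal X_0}_{S_0}}$ onto $\mathcal H^R_C$ and the $h^1$-vanishing on $R$; the latter is the Severi-theory fact quoted above, and dominance holds because the restriction map $H^0(A,\mathcal O_A(d)) \to H^0(R,\mathcal O_R(d))$ is surjective ($R$ a hyperplane), so arbitrary nodal deformations of $C$ in $R$ lift to deformations of $S_A$, hence of $S_0$. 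Hypothesis (c): $H^1(S_0,\mathcal N_{S_0|\mathcal X_0}\otimes \mathcal I_{\mathfrak Z|\mathcal X_0}) = 0$ with $\mathfrak Z = \{p_1,\dots,p_\delta\}$; since $\mathcal N_{S_0|\mathcal X_0}$ restricts to $\mathcal O_{S_A}(d)$ and $\mathcal O_{S_B}(d)$ glued along $\mathcal N_{C|R} = \mathcal O_C(d)$, a Mayer–Vietoris / gluing sequence reduces this to the vanishing of $H^1(S_A,\mathcal O_{S_A}(d))$, $H^1(S_B,\mathcal O_{S_B}(d))$ (Kodaira vanishing on these smooth degree-$d$ surfaces in $\mathbb P^3$, or direct computation from the exact sequence $0\to \mathcal O_{\mathbb P^3}\to \mathcal O_{\mathbb P^3}(d)\to \mathcal O_{S_A}(d)\to 0$) together with the already-established $h^1(\mathbb P^2,\mathcal O(d)\otimes \mathcal I_{\mathfrak Z}) = 0$. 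Granting these, Theorem~\ref{thm:main-theorem} yields that $S_0$ deforms to a $\delta$-nodal surface $S_t \subset \mathcal X_t \cong \mathbb P^3$ lying in a component $\mathcal V \subset \mathcal V^{\mathcal X|\mathbb D}_\delta$ of dimension $\dim(\mathcal H) - \delta = \dim|\mathcal O_{\mathbb P^3}(d)| + 1 - \delta$; restricting to the general fibre gives a component of $V^{\mathbb P^3,|\mathcal O(d)|}_\delta$ of the expected dimension $\binom{d+3}{3} - 1 - \delta$, and the equality of dimension with the Zariski tangent space bound forces $h^1(S_t,\mathcal O_{S_t}(d)\otimes \mathcal I_{N|S_t}) = 0$, i.e. regularity. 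Irreducibility of the component can be arranged by taking $C$ in the (classically known to be) irreducible Severi variety of irreducible $\delta$-nodal plane curves, so that $S_t$ is irreducible and moves in an irreducible family.

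The main obstacle I expect is the precise verification of hypothesis (c), i.e. the cohomology vanishing $H^1(S_0,\mathcal N_{S_0|\mathcal X_0}\otimes \mathcal I_{\mathfrak Z|\mathcal X_0}) = 0$ for the reducible surface $S_0$: one must carefully handle the gluing of the normal sheaves of $S_A$ and $S_B$ along $C$ and ensure the $\delta$ nodes of $C$ impose independent conditions not just on $|\mathcal O_{\mathbb P^2}(d)|$ but compatibly on the full linear system on $\mathcal X_0$ — this is where the sharp bound $\delta \le \binom{d-1}{2}$ enters, since it is exactly the threshold at which the nodes of an irreducible plane curve remain in general position with respect to $|\mathcal O(d)|$. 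A secondary point requiring care is checking smoothness of $\mathcal X$ (so that the whole machinery of Section~\ref{deformations} applies) and that the chosen degeneration of $\mathbb P^3$ genuinely has the stated normal-crossings central fibre.
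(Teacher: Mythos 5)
Your overall strategy --- degenerate $\mathbb P^3$ to a normal crossing threefold $A\cup B$, place a maximally nodal curve $C$ on $R=A\cap B$ so that $S_0=S_A\cup S_B$ acquires $T_1$ singularities at the nodes of $C$, and invoke Theorem \ref{thm:main-theorem} --- is exactly the paper's. But the specific degeneration you propose does not exist, and this cannot be waved away. If $\mathcal X\to\mathbb D$ has smooth total space and reduced central fibre $A\cup B$ with $A,B$ smooth meeting transversally along $R$, then $\mathcal O_{\mathcal X}(A+B)|_R\cong\mathcal O_R$ forces $\mathcal N_{R|A}\otimes\mathcal N_{R|B}\cong\mathcal O_R$; for two copies of $\mathbb P^3$ glued along a plane this product is $\mathcal O_{\mathbb P^2}(2)$, so no such semistable degeneration of $\mathbb P^3$ exists (and blowing up a line in the central fibre produces a $\mathbb P^2$-bundle over $\mathbb P^1$ as second component, not what you describe). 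The degeneration that does work, and that the paper uses, is obtained by blowing up a point $q$ in the central fibre of the trivial family $\mathbb P^3\times\mathbb D$, so that $A=\mathrm{Bl}_q\mathbb P^3$, $B\cong\mathbb P^3$ is the exceptional divisor of the fourfold blow-up, and $R$ is the exceptional plane of $A$ (here $\mathcal N_{R|A}=\mathcal O_{\mathbb P^2}(-1)$, $\mathcal N_{R|B}=\mathcal O_{\mathbb P^2}(1)$).

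This is not merely cosmetic, because it changes the numerology your construction relies on. With the correct degeneration, the relevant extension of $\mathcal O_{\mathbb P^3}(d)$ is $\mathcal L\otimes\mathcal O_{\mathcal X}((1-d)B)$, which restricts to $\mathcal O_{\mathbb P^2}(d-1)$ on $R$ (not $\mathcal O_{\mathbb P^2}(d)$), to $\mathcal O_{\mathbb P^3}(d-1)$ on $B$, and on $A$ to proper transforms of degree-$d$ surfaces with a point of multiplicity $d-1$ at $q$. (Twisting so as to obtain $\mathcal O_{\mathbb P^2}(d)$ on $R$ would force $S_A$ to be the proper transform of a cone over $C$ with vertex $q$, which is singular as soon as $C$ is.) Hence $C$ must be a plane curve of degree $d-1$, and your irreducible $\binom{d-1}{2}$-nodal curve of degree $d$ simply cannot be cut out on $R$. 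The curve realizing $\delta=\binom{d-1}{2}$ inside $|\mathcal O_{\mathbb P^2}(d-1)|$ is the union of $d-1$ general lines; one then uses regularity of the plane Severi variety at that totally reducible curve to get the independence of the conditions imposed by the nodes, i.e.\ hypothesis (c). Your sketches of (a), (b), (c) are reasonable in spirit and parallel the paper's verifications, but they must be carried out for this configuration; as written, the surface $S_0$ you construct does not live in any admissible $(\mathcal X,\mathcal L)$, so there is nothing to which Theorem \ref{thm:main-theorem} can be applied.
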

 
 \begin{proof} In view of Remark \ref{rem:reg},  it is sufficent to consider only the case $\delta={{d-1}\choose 2}.$
 
 Let $\mathcal X'=\mathbb P^3\times \mathbb D\to \mathbb D$ be a trivial family.  
 Let us consider $\mathcal  X\to\mathcal X'$  the blow up of a point $q$ in the central fibre $\mathbb P^3$ over $0\in \mathbb D$. Let $\mathcal X\to \mathbb D$ be the new family. The fibre over $t\in \mathbb D\setminus \{0\}$ of this family is $\mathcal X_t\cong \mathbb P^3$. The central fibre $\mathcal X_0$ consists of two components $A\cup B$, where $f:A\to \mathbb P^3$ is the blow--up of $\mathbb P^3$ at $q$, whereas $B\cong \mathbb P^3$ is the exceptional divisor in $\mathcal X$, and $A\cap B=R\cong \mathbb P^2$ is the exceptional divisor in $A$ and a plane in $B$. 
 
On $\mathcal X'$ there is a line bundle $\mathcal L'$, which is the pull--back via the first projection, of $\mathcal O_{\mathbb P^3}(d)$. We pull this back to $\mathcal X$ and  denote it $\mathcal L$. Now we consider on $\mathcal X$ the line bundle $\mathcal L\otimes \mathcal O_\mathcal X((1-d)B)$.  Its  restriction to the general fibre 
$\mathcal X_t$ is given by  $(\mathcal L\otimes \mathcal O_\mathcal X((1-d)B))|_{\mathcal X_t}\simeq\mathcal O_{\mathbb P^3}(d)$. 
As for the restriction of $\mathcal L\otimes \mathcal O_\mathcal X((1-d)B)$ to $\mathcal X_0$, we observe that  $(\mathcal L\otimes \mathcal O_\mathcal X((1-d)B))|_A\simeq\mathcal O_A(d)\otimes\mathcal O_A(-(d-1)R)$, where $\mathcal O_A(d)\simeq f^*(\mathcal O_{\mathbb P^3}(d))$; whereas $(\mathcal L\otimes \mathcal O_\mathcal X((1-d)B))|_B\simeq\mathcal O_{\mathbb P^3}(d-1)$ and, finally, the restriction of $\mathcal L\otimes \mathcal O_\mathcal X((1-d)B)$ to $R$ is $\mathcal O_{\mathbb P^2}(d-1)$.  One easily checks that the line bundle $\mathcal L\otimes \mathcal O_\mathcal X((1-d)B)$ verifies the hypotheses (1), (2) and (3) at the beginning of Section \ref{sect:smoothing-to-nodes}.

 We now consider on $R$ a curve $C$ which consists of the union of $d-1$ general lines. Its has $\delta={{d-1}\choose 2}$ nodes as singularities. By standard application of Bertini's theorem, there exists a smooth surface $S_B$ in $B$ of degree $d-1$ cutting out on $R$ the curve $C$. Similarly there exists a smooth surface 
 $S_A\in |\mathcal O_A(d)\otimes\mathcal O_A(-(d-1)R)|$ restricting to $C$ on $R$. To see this let $(x,y,z)$ be an affine coordinates system on $\mathbb P^3$ centered at $q$. If $\phi_1(x,y,z)=0$ is the equation of $C$ in the plane at infinity, and $\phi_2(x,y,z)$ is a general homogeneous polynomial of degree $d$ in $(x,y,z)$, then the projective closure $S_B$ of the degree $d$ affine surface with equation $\phi_1(x,y,z)+\phi_2(x,y,z)=0$ has a point of multiplicity $d-1$ at $q$ and no other singularities, and its minimal resolution obtained by blowing up $q$ is the required surface. 

Now $S_0=S_A\cup S_B$ is a Cartier divisor in $\mathcal X_0$ belonging to the linear system  $|\mathcal L\otimes \mathcal O_\mathcal X((1-d)B)|$. Moreover $S_0$ verifies all hypotheses of Theorem \ref{thm:main-theorem}. In particular, if $\mathfrak Z$ is the reduced scheme of the nodes of $C$, then $\mathfrak Z$ imposes independent conditions to $|\mathcal L\otimes \mathcal O_\mathcal X((1-d)B)|_R|=|\mathcal O_{\mathbb P^2}(d-1)|$,  and therefore to $|\mathcal L\otimes \mathcal O_\mathcal X((1-d)B)|$, because the Severi varieties of 
nodal curves in the plane are well known to be regular. By applying Theorem \ref{thm:main-theorem}, one may deform $S_0$ to a surface $S_t\subset\mathcal X_t$
with $\delta$ nodes and no further singularities, which are deformations of the $\delta$ singularities of type $T_1$ of $S_0$. Finally the nodes of $S_t$ impose independent conditions to surfaces of degree $d$ on $\mathcal X_t\simeq \mathbb P^3$. Hence $[S_t]\in V^{\mathbb P^3, |\mathcal O_{\mathbb P^3}(d)|}_{\delta}$ belongs to a regular component of the Severi variety.
\end{proof}

\begin{remark}
Taking into account Remark \ref{rm:massimo}, we believe that the previous results is far from being sharp, not even asymptotically. 
Indeed  we may expect that the bound on $\delta$ for the existence of regular components of the Severi variety of nodal surfaces
of degree $d$ in $\mathbb P^3$ could asymptotically go as $\delta\sim {d^3\over 24}$.
See also \cite[Corollary 4.1]{L} and related references for a very large upper  bound of the number of nodes $\delta$ of a surface in $\mathbb P^3$ in a regular
component of the Severi variety (if non-empty). Moreover our results could in principle be improved by imposing to $S_0$ nodes off $R$, but we do not dwell on this here.  
\end{remark}

\begin{remark} The known results about Problem \ref{regular} are very few. For example, in \cite{Kl} one proves that, {\it if} $V^{\mathbb P^3, |\mathcal O_{\mathbb P^3}(d)|}_\delta$ is non--empty, then every component of it is regular for $d\leq 7$ and for $d\geq 8$ and $\delta\leq 4d-5$, and this last bound is sharp (the case $d\leq 7$ was already proved in \cite {D}). Nonemptiness results for $d\leq 7$ are also well known (see, e.g.,  \cite[p. 120] {L}). 
In particular, our Theorem \ref {thm:reg} is, at the best of our knowledge, new as soon as $d\geq 8$. 
\end{remark}

 \subsection{Complete intersections in $\mathbb P^4$} In this Section we want to provide a partial answer to Problem \ref{regular} in the case of complete intersections in $\mathbb P^4$.
 
 Let $X$ be a general hypersurface of degree $h\geq 2$ in $\mathbb P^4$. We consider on $X$ the linear system $|\mathcal O_X(d)|$. Our aim is to construct regular components of $V^{X,|\mathcal O_X(d)|}_\delta$ with suitable $\delta$.
 
 \begin{theorem}\label{thm:regol} Let $d\geq h-1$ be an integer. 
 There are regular components of $V^{X,|\mathcal O_X(d)|}_\delta$ for 
 $$\delta\leq {{d+3}\choose 3}-{{d-h+1}\choose 3}-1.$$
 \end{theorem}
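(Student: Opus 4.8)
The plan is to mimic the construction used in the proof of Theorem~\ref{thm:reg}, but with $\mathbb P^3$ replaced by the cone construction adapted to complete intersections. As in Remark~\ref{rem:reg} it suffices to treat the extremal value $\delta = \binom{d+3}{3} - \binom{d-h+1}{3} - 1$. First I would set up the degeneration: let $\mathcal X' = X \times \mathbb D \to \mathbb D$ be the trivial family, and let $\mathcal X \to \mathcal X'$ be the blow-up of a general point $q$ in the central fibre $X$ over $0 \in \mathbb D$. The new family $\mathcal X \to \mathbb D$ has general fibre $\mathcal X_t \cong X$ and central fibre $\mathcal X_0 = A \cup B$, where $A \to X$ is the blow-up of $X$ at $q$, $B$ is the exceptional divisor (a $\mathbb P^3$, since $X$ is smooth and $3$-dimensional), and $R = A \cap B$ is the exceptional $\mathbb P^2$ in $A$ and a hyperplane in $B$. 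On $\mathcal X$ take $\mathcal L$ the pullback of $\mathcal O_X(d)$, and twist to $\mathcal L \otimes \mathcal O_{\mathcal X}((1-d)B)$; one checks as before that the restriction to $\mathcal X_t$ is $\mathcal O_X(d)$, the restriction to $A$ is $f^*\mathcal O_X(d)(-(d-1)R)$, the restriction to $B$ is $\mathcal O_{\mathbb P^3}(d-1)$, and the restriction to $R \cong \mathbb P^2$ is $\mathcal O_{\mathbb P^2}(d-1)$. One verifies hypotheses (1), (2), (3) of Section~\ref{sect:smoothing-to-nodes}; here hypothesis (1), i.e.\ $h^0(\mathcal X_t, \mathcal O_X(d))$ being constant in $t$ and the fibres dominating $\mathbb D$, uses the standard exact sequence $0 \to \mathcal O_{\mathbb P^4}(d-h) \to \mathcal O_{\mathbb P^4}(d) \to \mathcal O_X(d) \to 0$, which also gives $h^0(X,\mathcal O_X(d)) = \binom{d+4}{4} - \binom{d-h+4}{4}$, and the count $\dim|\mathcal O_X(d)| = \binom{d+4}{4} - \binom{d-h+4}{4} - 1$.

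Next I would build the surface $S_0 = S_A \cup S_B$ on $\mathcal X_0$. On $R \cong \mathbb P^2$, choose a plane curve $C$ of degree $d-1$ with $\delta$ nodes and no other singularities; since the plane Severi varieties are non-empty and regular, and $\delta \le \binom{d-1}{2}$ must be checked against the claimed bound (this is where the arithmetic $\binom{d+3}{3} - \binom{d-h+1}{3} - 1 \le \binom{d-1}{2}$ needs to hold — in fact for $h \ge 2$ the bound is typically much larger, so $C$ should instead be taken as a general nodal curve realizing the maximal $\delta$ such that its nodes still impose independent conditions, and one must re-examine which curve to use). The cleaner route: take $S_B \subset B \cong \mathbb P^3$ a smooth surface of degree $d-1$ cutting $C$ on $R$, and $S_A \in |f^*\mathcal O_X(d)(-(d-1)R)|$ a smooth surface cutting $C$ on $R$, constructed exactly as in Theorem~\ref{thm:reg} by taking, in affine coordinates centred at $q$, $\phi_1 + \phi_2$ with $\phi_1$ the cone over $C$ and $\phi_2$ a general degree-$d$ polynomial of the ideal of $X$, whose projective closure on $X$ has a point of multiplicity $d-1$ at $q$ and is otherwise smooth. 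Then $S_0 = S_A \cup S_B$ has $\delta$ singularities of type $T_1$ along $R$ and is otherwise smooth, so $\delta_A = \delta_B = 0$, $\delta_R = \delta$.

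Then I would apply Theorem~\ref{thm:main-theorem}. The hypotheses to check are: (a) the hypotheses of Lemma~\ref{lm:upper-bound}, i.e.\ $h^1(\mathcal N_{C|R} \otimes \mathcal I_{\mathfrak Z|R}) = 0$ (true for plane nodal curves with nodes imposing independent conditions) and dominance of $\varphi$ onto $\mathcal H^R_C$ (which follows because $S_A, S_B$ move in large enough linear systems surjecting onto $|\mathcal O_{\mathbb P^2}(d-1)|$); (b) $\mathcal L_0(2,p_i)$ has codimension $3$ in $|\mathcal L_0|$ for each node $p_i$ (true because a $T_1$ singularity imposes exactly $3$ conditions here — this is where $d \ge h-1$ enters, guaranteeing the restriction maps to $R$ are surjective so no unexpected drop occurs); (c) $H^1(S_0, \mathcal N_{S_0|\mathcal X_0} \otimes \mathcal I_{\mathfrak Z|\mathcal X_0}) = 0$, which reduces via the normal bundle sequences on the components to vanishing of $H^1$ of twists of $\mathcal O_A(d)(-(d-1)R)$ and $\mathcal O_{\mathbb P^3}(d-1)$ with the ideal of $\delta$ general points, i.e.\ to the statement that $\mathfrak Z$ imposes independent conditions on $|\mathcal O_{\mathbb P^2}(d-1)|$ and hence on $|\mathcal L_0|$. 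The number $\delta = \binom{d+3}{3} - \binom{d-h+1}{3} - 1$ should come out as exactly $h^0(R, \mathcal O_R(d-1))$-related combinatorics: indeed $\binom{d+3}{3} - \binom{d-h+1}{3}$ is the dimension of the image of $H^0(\mathcal X_0,\mathcal L_0)$ in $H^0(R,\mathcal O_{\mathbb P^2}(d-1))$ restricted appropriately, and the $\delta$ nodes of a general such curve impose independent conditions precisely up to this bound minus one. Granting (a)--(c), Theorem~\ref{thm:main-theorem} deforms $S_0$ off $\mathcal X_0$ to $S_t \subset \mathcal X_t \cong X$ with exactly $\delta$ nodes; since the nodes then impose independent conditions on $|\mathcal O_X(d)|$ (again by (c) and semicontinuity, exactly as in the last line of Theorem~\ref{thm:reg}), $[S_t]$ lies on a regular component of $V^{X,|\mathcal O_X(d)|}_\delta$. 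The main obstacle I anticipate is the bookkeeping in step (b)/(c): correctly computing the codimension of $\mathcal L_0(2,p)$ and verifying the $H^1$-vanishing, which amounts to a careful cohomology computation on the blown-up threefold $A$ using the exact sequences for $\mathcal O_A(d)(-(d-1)R)$, together with pinning down the exact combinatorial identity that produces the stated value of $\delta$ as the maximal number of independent conditions; the condition $d \ge h-1$ is exactly what is needed to keep all the relevant restriction maps surjective.
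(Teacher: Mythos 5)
There is a genuine gap, and it is the one you flagged yourself in parentheses but then did not resolve. In your degeneration (blowing up a point $q$ of the central fibre of $X\times\mathbb D$) the double surface is $R\cong\mathbb P^2$ and the restricted linear system is $|\mathcal O_{\mathbb P^2}(d-1)|$, so the curve $C=S_0\cap R$ carrying the $T_1$ singularities is a plane curve of degree $d-1$. Such a curve has at most $\binom{d-1}{2}$ nodes, and $h^0(\mathbb P^2,\mathcal O(d-1))=\binom{d+1}{2}$, so no choice of $C$ in this setup can produce $\delta=\binom{d+3}{3}-\binom{d-h+1}{3}-1$ singularities of type $T_1$: for every $h\geq 2$ this target grows like $(h-1)d^2/2$ plus lower order terms and already for $h=2$ exceeds $\binom{d-1}{2}$. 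Your parenthetical remark that ``$C$ should instead be taken as a general nodal curve realizing the maximal $\delta$'' cannot be implemented inside $|\mathcal O_{\mathbb P^2}(d-1)|$, and your later assertion that $\binom{d+3}{3}-\binom{d-h+1}{3}$ is the dimension of the image of $H^0(\mathcal X_0,\mathcal L_0)$ in $H^0(R,\mathcal O_{\mathbb P^2}(d-1))$ is false for the same reason. So the construction, as proposed, proves the theorem only for $\delta$ up to roughly $\binom{d-1}{2}$, not for the stated bound.

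The paper uses an essentially different degeneration precisely to make the double surface rich enough: it degenerates $X$ itself inside $\mathbb P^4$ via the pencil $tf+g\ell=0$ to $Y\cup H$, with $Y$ a general hypersurface of degree $h-1$ and $H$ a hyperplane, then resolves the total space (which is singular along the curve $D=X\cap Y\cap H$) and contracts suitably, obtaining a central fibre $A\cup B$ with $B\cong\mathbb P^3$, $A=\mathrm{Bl}_D(Y)$, and $R=Y\cap H$ a \emph{general surface of degree $h-1$ in $\mathbb P^3$}. On such an $R$ the result of Chiantini--Ciliberto \cite{CC} provides a regular component of the Severi variety of $\delta$-nodal curves in $|\mathcal O_R(d)|$ with $\delta=\dim|\mathcal O_R(d)|$, which is exactly the bound in the statement; the rest of the argument (lifting $C$ to $S_0=S_A\cup S_B$ by Bertini, checking the hypotheses of Theorem \ref{thm:main-theorem}, and deforming off the central fibre) then runs as you describe. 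If you want to salvage your write-up, the degeneration and the source of the nodal curve $C$ must be replaced; the point-blow-up degeneration is structurally incapable of reaching the claimed $\delta$.
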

 
 \begin{proof}
 As usual, to prove the theorem, it suffices to do the case 
 $$\delta={{d+3}\choose 3}-{{d-h+1}\choose 3}-1.$$
 
 Let $Y$ be a general hypersurface of $\mathbb P^4$ of degree $h-1$ and $H$ be a general hyperplane, cutting $Y$ along a surface $R$,
 which is a general surface of degree $h-1$ in $H\simeq \mathbb P^3$. Let $X$  be  a general hypersurface of degree $h$ and let us consider the pencil
 generated by $X$ and $Y\cup H$. Specifically, if $X$ has equation $f=0$,  $Y$ has equation $g=0$ and $H$ has equation $\ell =0$, we will consider the 
 hypersurface $\mathcal X''$ in $\mathbb P^4\times \mathbb A^1 $, with equation $\{tf+g\ell =0,\,{\rm{with}}\,t\in \mathbb A^1\}$. Via the second projection $\mathcal X''\to\mathbb A^1$, this becomes a flat family of $3$-folds, with smooth general fibre $\mathcal X''_t$, corresponding to a general
 hypersurface of degree $h$ in $\mathbb P^4$, and  whose fibre over $0$ is $\mathcal X''_0=Y\cup H\subset\mathbb P^4$.
 We are interested to the singularities of $\mathcal X''$ in a neighbordhood of the central fibre, i.e., we are interested in what happens if $t$ belongs to a disc $\mathbb D$, centered at the origin. Thus we consider the family $$\mathcal X'=\{tf+g\ell =0,\,{ \rm with}\,t\in \mathbb D\}\to\mathbb D.$$ It is immediate to see
that the singular locus of $\mathcal X'$ coincides with the curve $D:t=f=g=\ell=0\subset \mathcal X'_0$, which is isomorphic  to a smooth complete intersection curve of type $(1,h-1,h)$ in $\mathbb P^4$ cut out on $R=Y\cap H$ by $X$.  Moreover, $\mathcal X'$ has double points along $D$ with tangent cone a quadric of rank 4. We resolve these singularities by blowing-up $\mathcal X'$ along  $D$. One obtains a new family $\tilde{\mathcal X}\to\mathbb D$ with the same general fibre as $\mathcal X'\to\mathbb D$
 and whose central fibre consists of three components $\tilde Y$ and $\tilde H$, the blow-ups of $Y$ and $H$ along $D$ and the exceptional divisor $\tilde{\Theta}$
 that is a $\mathbb P^1\times \mathbb P^1$ bundle over $D$. Now we can contract $ \tilde \Theta$ by contracting one of the two rulings of the  
 $\mathbb P^1\times \mathbb P^1$ bundle. We choose to do this in the direction of $Y$. We obtain a new family of 3-folds $\mathcal X\to\mathbb D$, with $\mathcal X$ smooth, 
 with fiber $\mathcal X_t=\mathcal X'_t$ over $t\neq 0$, and  whose central fiber $\mathcal X_0=A\cup B$, where now $B=H\simeq \mathbb P^3$ and $A={\rm Bl}_D(Y)$ is
 the blowing-up of  $Y$ along $D$ and $A$ and $B$ intersect transversally along a surface isomorphic to $R$, which we still denote by $R=A\cap B$. The exceptional divisor $\Theta$ in $A={\rm Bl}_D(Y)$ is a $\mathbb P^1$--bundle on $D\subset R$, intersecting $R$ along $D$. In particular $\Theta\simeq \mathbb P(\mathcal N_{D|Y})$. 
 
 Notice that one has a natural morphism $\tilde {\mathcal X}\to \mathbb P^4$. This  factors through a morphism $\phi:  \mathcal X\to \mathbb P^4$. The action of $\phi$ on $\mathcal X_0$ is as follows: it maps $B$ isomorphically to $H$ and it maps $A$ to $Y$ by contracting the exceptional divisor $\Theta$. Let us now set $\mathcal L_d= \phi^*(\mathcal O_{\mathbb P^4}(d))$ and assume that $d\geq h-1$. 
 
Recall  that $R$ is a general surface of degree $h-1$ in $\mathbb P^3$, with $h\geq 2$. 
By \cite {CC}, $V^{R, |\mathcal O_R(d)|}_\delta$ is non--empty and contains a regular component $V$ for 
$$
\delta=\dim (|\mathcal O_R(d)|)={{d+3}\choose 3}-{{d-h+1}\choose 3}-1.
$$
So we can choose a general curve $C$ in $V$, that is a complete intersection of type $(h-1,d)$ on $R$ with $\delta$ nodes. Using Bertini's theorem, we can assume that there is a divisor $S_0\in \mathcal L_{d|\mathcal X_0}$ that cuts out $C$ on $R$ and $S_0=S_A\cup S_B$ (the notation is obvious), with $S_A$ and $S_B$ smooth. 

Now $S_0$ verifies all hypotheses of Theorem \ref{thm:main-theorem}. In particular, if $\mathfrak Z$ is the reduced scheme of  nodes of $C$, then $\mathfrak Z$ imposes independent conditions to $\mathcal L_{d}|_{\mathcal X_0}$,  because the component $V$ of the Severi variety  is  regular. By applying Theorem \ref{thm:main-theorem}, one may deform $S_0$ to a surface $S_t\subset\mathcal X_t$
with $\delta$ nodes and no further singularities, which are deformations of the $\delta$ singularities of type $T_1$ of $S_0$. Finally the nodes of $S_t$ impose independent conditions to surfaces in $|\mathcal O_{X_t}(d)|$. Hence $[S_t]\in V^{\mathcal X_t, |\mathcal O_{\mathcal X_t}(d)|}_{\delta}$ belongs to a regular component of the Severi variety, as wanted. \end{proof}

{\bf Conflicts of interest}: none.

{\bf Acknowledgments and financial support}: The authors are grateful to the referee for careful reading. Moreover, the authors want to thank Th. Dedieu, B. Fantechi, R. Pardini, E. Sernesi, for useful discussions on the topics of this paper. The authors are member of GNSAGA of INdAM. In particular, 
 the second author acknowledges funding from the GNSAGA of INdAM and the European Union - NextGenerationEU under the National Recovery and Resilience Plan (PNRR) - Mission 4 Education and research - Component 2 From research to business - Investment 1.1, Prin 2022 "Geometry of algebraic structures: moduli, invariants, deformations", DD N. 104, 2/2/2022, proposal code 2022BTA242 - CUP J53D23003720006.
\medskip

{}
\end{document}